\newcommand{\bbmu}{\mu}
\newcommand{\Kbar}{{\overline{K}}}
\newcommand{\Wgen}{W_{\operatorname{gen}}}
\newcommand{\xbar}{\overline{x}}
\newcommand{\zbar}{\overline{z}}
\def\A{\mathbb A}
\def\C{\mathbb C}
\def\F{\mathbb F}
\def\Q{\mathbb{Q}}
\def\R{\mathbb{R}}
\def\T{\mathbb{T}}
\def\Z{\mathbb{Z}}
\def\Fbar{\overline{\F}}
\def\Qbar{{\overline{\Q}}}
\def\Zbar{\overline{\Z}}
\def\m{\mathfrak m}
\newcommand{\st}{\mathrm{st}}
\def\chibar{\overline{\chi}}
\def\unif{\varpi}
\def\et{\mathrm{\acute{e}t}}
\def\lalg{\mathrm{l.alg}}
\def\ab{\mathrm{ab}}
\def\SL{\mathrm{SL}}
\def\GL{\mathrm{GL}}
\def\Gal{\mathrm{Gal}}
\def\End{\mathrm{End}}
\def\Hom{\mathop{\mathrm{Hom}}\nolimits}
\def\Spec{\mathop{\mathrm{Spec}}\nolimits}
\def\Spf{\mathop{\mathrm{Spf}}\nolimits}
\def\Frob{\mathop{\mathrm{Frob}}\nolimits}
\def\Ind{\mathop{\mathrm{Ind}}\nolimits}
\def\rhobar{\overline{\rho}}
\def\pst{\mathrm{pst}}
\def\W{\mathrm{W}}
\def\WD{\mathrm{WD}}
\def\m{\mathfrak{m}}
\def\iso{\buildrel \sim \over \longrightarrow}
\def\OT{\mathop{\mathrm{OT}}\nolimits}
\newcommand{\onto}{\twoheadrightarrow}
\newcommand{\into}{\hookrightarrow}
\newcommand{\To}{\longrightarrow}
\newcommand{\isoto}{\stackrel{\sim}{\To}}
\newcommand{\FBrModdd}[1][r]{\text{$k_E$-$\operatorname{BrMod}_{\mathrm {dd}}^{#1}$}}
\newcommand{\textT}{\mathrm{T}}
\newcommand{\Tst}{\textT_{\st}}
\newlength{\ownl}
\newcommand{\chara}{{\operatorname{char}\,}}
\newcommand{\CHom}{\mathop{Hom}\nolimits}
\newcommand{\tr}{{\operatorname{tr}\,}}
\newcommand{\wt}[1]{\widetilde{#1}}
\newcommand{\Gm}{{\mathbb{G}_m}}
\newcommand{\cris}{{\operatorname{cris}}}
\newcommand{\op}{{\operatorname{op}}}
\newcommand{\G}{{\mathbb{G}}}
\newcommand{\M}{{\mathcal{M}}}
\newcommand{\CO}{{\mathcal{O}}}
\newcommand{\cA}{\mathcal{A}}
\newcommand{\cF}{\mathcal{F}}
\newcommand{\cG}{\mathcal{G}}
\newcommand{\cM}{\mathcal{M}}
\newcommand{\cN}{\mathcal{N}}
\newcommand{\cO}{\mathcal{O}}
\newcommand{\cU}{\mathcal{U}}
\newcommand{\cX}{\mathcal{X}}
\newcommand{\cY}{\mathcal{Y}}
\newcommand{\gp}{{\mathfrak{p}}}
\newcommand{\tD}{\widetilde{{D}}}
\newcommand{\tJ}{\widetilde{{J}}}
\newcommand{\tS}{\widetilde{{S}}}
\newcommand{\tV}{\widetilde{{V}}}
\newcommand{\tj}{\widetilde{{j}}}
\newcommand{\etabar         }{\overline{\eta}}
 \newcommand{\omegat   }{\widetilde{\omega}}
 \newcommand{\p}{\mathfrak{p}}
\newcommand{\rbar}{\bar{r}}
 \newcommand{\Qp}{{\Q_p}}
\newcommand{\Qpn}{{\Q_{p^n}}}
\newcommand{\GQp}{{G_{\Q_p}}}
\newcommand{\IQp}{{I_{\Q_p}}}
\newcommand{\Zp}{{\Z_p}}
\newcommand{\Zptimes}{{\Z_p^\times}}
\newcommand{\Qptimes}{{\Q_p^\times}}
\newcommand{\Zl}{{\Z_l}}
\newcommand{\Ql}{\Q_l} 
\newcommand{\Qpbar}{{\overline{\Q}_p}}
\newcommand{\Zpbar}{{\overline{\Z}_p}}
\newcommand{\Fpbar}{{\overline{\F}_p}}
\newcommand{\Fptimes}{{{\F}_p^\times}}
\newcommand{\Fpbartimes}{{\overline{\F}^\times_p}}
\newcommand{\Fp}{{\F_p}}
\newcommand{\cshO}[1]{\widehat{\cO_{#1}^{\mathrm{\, sh}}}}
\newtheorem{theorem}[subsubsection]{Theorem}
\newtheorem{thm}[subsubsection]{Theorem}
\newtheorem{lemma}[subsubsection]{Lemma}
\newtheorem{lem}[subsubsection]{Lemma}
\newtheorem{defn}[subsubsection]{Definition}
\newtheorem{cor}[subsubsection]{Corollary}
\newtheorem{prop}[subsubsection]{Proposition}
\newtheorem{remark}[subsubsection]{Remark}
\newtheorem{rem}[subsubsection]{Remark}
\newtheorem{hyp}[subsubsection]{Hypothesis}
\newtheorem{ithm}{Theorem}
\def\numequation{\addtocounter{subsubsection}{1}\begin{equation}}
\def\nummultline{\addtocounter{subsubsection}{1}\begin{multline}}
\def\anumequation{\addtocounter{subsection}{1}\begin{equation}}
\renewcommand{\theequation}{\arabic{section}.\arabic{subsection}.\arabic{subsubsection}}
\title[$p$-adic Hodge-theoretic properties of mod $p$ \'etale cohomology]
{$p$-adic Hodge-theoretic properties of \'etale cohomology with mod   $p$ coefficients,
and the cohomology of Shimura varieties}
\author{Matthew Emerton and Toby Gee}
\thanks{The first author was supported in part by NSF grant
  DMS-1003339, and the second author was supported in part by NSF
  grant DMS-0841491 and EPSRC Mathematics Platform grant EP/I019111/1}
\address[Matthew Emerton]{Mathematics Department, University of Chicago}
\address[Toby Gee]{Mathematics Department, Imperial College London \newline \quad} 
\email[Matthew Emerton]{emerton@math.uchicago.edu}
\email[Toby Gee]{toby.gee@imperial.ac.uk}
\begin{document}

\maketitle

\begin{abstract}
  We prove vanishing results for the cohomology of unitary Shimura
  varieties with integral coefficients at arbitrary level, and deduce applications to the
  weight part of Serre's conjecture. In order to do this, we show that the mod~$p$ cohomology of a smooth projective variety
  with semistable reduction over $K$, a finite extension of $\Qp$,
  embeds into the reduction modulo $p$ of a semistable Galois
  representation with Hodge--Tate weights in the expected range (at least
  after semisimplifying, in the case of the cohomological degree greater than~$1$).
  \end{abstract}

The aim of this paper is to establish vanishing results for the
cohomology of certain unitary similitude groups. 
 For
example, we prove the following result.

\begin{ithm}
  \label{thm: main vanishing for U(2,1) intro}
  Let $X$ be a projective $U(2,1)$-Shimura variety of some sufficiently small level, and let $\mathcal F$ be a canonical local system of $\Fbar_p$-vector spaces
  on $X$.
  Let $\m$ be a maximal
  ideal of the Hecke algebra acting on the cohomology $H^{\bullet}(X,\mathcal F)$,
  and suppose that there is a Galois
  representation $\rho_{\mathfrak m}:G_F\to\GL_3(\Fpbar)$ associated
  to $\m$. If we suppose further that we have $\SL_3(k)\subset\rho_{\mathfrak{m}}(G_F)\subset\Fpbartimes\SL_3(k)$
  for some finite extension $k/\Fp$, and that $\rho_{\mathfrak m} |_ {G_{\mathbb Q_p}}$ is
  $1$-regular and irreducible, then the localisations $H^i_{\et}(X_\Qbar, \mathcal F)_{\mathfrak m}$
  vanish in degrees $i\ne 2$.
\end{ithm}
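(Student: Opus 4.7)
The plan is to combine the paper's main embedding theorem for mod $p$ \'etale cohomology with the bigness and local hypotheses on $\rho_{\mathfrak m}$ to reach a Hodge-theoretic contradiction in degrees $i \neq 2$.

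By Poincar\'e duality for the $2$-dimensional projective Shimura variety $X$, the localization $H^i_{\et}(X_{\Qbar}, \mathcal F)_{\mathfrak m}$ is dual (up to a Tate twist) to $H^{4-i}_{\et}(X_{\Qbar}, \mathcal F^{\vee})_{\mathfrak m^{\vee}}$, where $\mathfrak m^{\vee}$ is the dual Hecke maximal ideal; its associated Galois representation is (a twist of) the contragredient of $\rho_{\mathfrak m}$, and so satisfies exactly the same bigness, irreducibility and $1$-regularity hypotheses. It therefore suffices to prove vanishing for $i \in \{0,1\}$.

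Suppose for contradiction that $H^i_{\et}(X_{\Qbar}, \mathcal F)_{\mathfrak m} \neq 0$ for some such $i$. The bigness hypothesis $\SL_3(k) \subseteq \rho_{\mathfrak m}(G_E) \subseteq \Fpbartimes \SL_3(k)$ implies in particular that $\rho_{\mathfrak m}$ is absolutely irreducible; a standard Chebotarev and Brauer--Nesbitt argument using the compatibility of Hecke eigenvalues with traces of Frobenius then shows that every Jordan--H\"older constituent of $H^i_{\et}(X_{\Qbar}, \mathcal F)_{\mathfrak m}$, as a $G_E$-representation, is isomorphic to $\rho_{\mathfrak m}$. Restricting via the fixed embedding $\Qbar \hookrightarrow \Qbar_p$ to a decomposition subgroup at $p$, the irreducible representation $\rho_{\mathfrak m}|_{G_{\Qp}}$ therefore appears as a Jordan--H\"older constituent of $H^i_{\et}(X_{\Qbar}, \mathcal F)_{\mathfrak m}|_{G_{\Qp}}$. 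Applying the main theorem of the paper, using a semistable integral model of $X$ at $p$ (available at sufficiently small level), one obtains that for $i \leq 1$ this $G_{\Qp}$-representation embeds (with no semisimplification needed) into the reduction $\overline V$ modulo $p$ of a semistable representation $V$ of $G_{\Qp}$ whose Hodge--Tate weights lie in an interval of length at most $i$, up to a uniform shift depending only on the weight of $\mathcal F$. Hence $\rho_{\mathfrak m}|_{G_{\Qp}}$ must occur as a Jordan--H\"older constituent of such an $\overline V$.

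The main obstacle, and the heart of the argument, is to rule out this situation using the irreducibility and $1$-regularity of $\rho_{\mathfrak m}|_{G_{\Qp}}$. An irreducible $3$-dimensional mod $p$ representation of $G_{\Qp}$ is induced from a character of $G_{\Q_{p^3}}$, and is thus determined on inertia by a Frobenius orbit of three tame inertia weights. By Fontaine--Laffaille theory, applied to the semistable $V$ with Hodge--Tate weights in an interval of length at most $1$, the tame inertia weights of any irreducible $3$-dimensional Jordan--H\"older constituent of $\overline V$ are forced into a correspondingly narrow configuration (essentially a Frobenius orbit of three exponents each lying in a set of size two). The $1$-regularity condition is set up precisely to preclude $\rho_{\mathfrak m}|_{G_{\Qp}}$ from having such a configuration of tame weights. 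This contradicts the conclusion of the previous paragraph, and completes the proof.
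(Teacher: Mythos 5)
The proposal follows the same broad skeleton as the paper — Poincar\'e duality, an embedding theorem for mod $p$ cohomology, and a local contradiction — but it contains three genuine gaps, two of which are fatal.

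First, the claim that ``a standard Chebotarev and Brauer--Nesbitt argument'' shows every irreducible $G_E$-constituent of $H^i_{\et}(X_{\Qbar},\mathcal F)_{\mathfrak m}$ is $\rho_{\mathfrak m}$ does not hold. The Eichler--Shimura relation (Theorem~\ref{thm: Eichler-Shimura}) only gives that $\chara\bigl(\rho_{\mathfrak m}(\Frob_w)\bigr)$ \emph{annihilates} the image of $\Frob_w$ on cohomology; it is not a trace identity, and annihilation does not determine the constituent. Indeed Remark~\ref{rem:A4} shows this can fail for $A_4 \to \mathrm{SO}_3$. This is exactly why the paper isolates Hypothesis~\ref{hyp:Galois char} and proves Lemma~\ref{lem:big}, which uses the bigness hypothesis $\SL_3(k)\subset\rho_{\mathfrak m}(G_E)\subset\Fpbartimes\GL_3(k)$ in an essential, delicate way (analysing highest weights of restrictions to $\SL_3(k')$). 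The paper's introduction explicitly flags the problem of relating the ``abstract'' $\rho_{\mathfrak m}$ to the ``physical'' representations on cohomology as one of the central difficulties; you have assumed it away.

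Second, you invoke Fontaine--Laffaille theory to constrain the reduction $\overline V$. This cannot work here: the representations produced by Theorem~\ref{thm:arb cohom equivariant} are only \emph{potentially} semistable, becoming semistable over $L=F(\zeta_{p-1},(-p)^{1/(p-1)})$, a ramified extension with $e=p-1$. The bound $ei<p-1$ required for Fontaine--Laffaille or Caruso's comparison theorem fails already for $i=1$. This is precisely the reason the paper develops the Breuil module argument of Section~\ref{sec:Breuil} (Theorem~\ref{thm: generic vanishing in small HT weights}), which handles potentially semistable representations with prescribed inertial type via rank-one subobjects with descent data, and also needs a determinant hypothesis (checked in the proof of Theorem~\ref{thm:main one} via the central character and the operators $T_w^{(n)}$). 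Your sketch also needs, but omits, control of the inertial type: without the equivariant refinement (Theorem~\ref{thm:arb cohom equivariant}, the tame strictly semistable context of Lemma~\ref{lem:semistable model}, and the Oort--Tate construction), one cannot pin down which characters $\tilde\beta_j$ occur, and the argument with Breuil modules has no traction.

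Third, you assert a semistable integral model of $X$ at $p$ exists ``at sufficiently small level.'' That is false for levels deeper than Iwahori at $p$. The paper's proof runs an induction on $i$ and uses the Hochschild--Serre spectral sequence (together with the inductive vanishing in lower degrees) to reduce to the constant local system at level $K=I_1^*K^p$, for which the Oort--Tate construction provides the required strictly semistable model over $\cO_L$. Your proposal skips this d\'evissage entirely, and so never actually reaches a geometry to which the embedding theorem applies.
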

(See Corollary \ref{cor: main vanishing for U(2,1)} and Lemma
\ref{lem:big} below,
and see Sections \ref{sec:Breuil} and \ref{sec:Shimura} for the
precise definitions that we are using; for simplicity we work with
$U(2,1)$-Shimura varieties\footnote{These Shimura varieties might more
 properly be called $GU(2,1)$-Shimura varieties; see Section
 \ref{sec:Shimura} for their definition.} over a quadratic imaginary field $F$. Note that ``sufficiently small
level'' means that the compact open subgroup defining the level is
sufficiently small. We say that a Galois representation is associated to a
maximal ideal of a Hecke algebra if there is the usual relation between Hecke
polynomials and characteristic polynomials of Frobeneii at unramified places;
see~\S\ref{subsec:
vanishing theorems for U(n-1,1)} for a precise definition.)

In fact, we prove a version of this result for
$U(n-1,1)$-Shimura varieties 
under weaker assumptions on $\rho_\m$; however, in general we can only prove
vanishing in degrees outside of the range $[n/2,(3n-4)/2]$. 

We also prove the following result, which makes no explicit reference
to a maximal ideal in the Hecke algebra.

\begin{ithm}
\label{thm:main two intro}
Let $X$ and $\mathcal F$ be as in the statement of Theorem~{\em \ref{thm: main vanishing for U(2,1) intro}}.
  If $\rho$ is a three-dimensional irreducible sub-$G_F$-representation 
of the \'etale cohomology group $H^1_{\et}(X_{\Qbar},\mathcal F)$,
then either every irreducible subquotient
of $\rho_{| G_{\mathbb Q_p}}$ is one-dimensional, or else 
$\rho_{| G_{\mathbb Q_p}}$ is not $1$-regular, or else $\rho(G_F)$ is not generated
by its subset of regular elements.
\end{ithm}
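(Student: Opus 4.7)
The plan is to argue by contradiction, assuming that all three alternatives in the conclusion fail: so $\rho|_{G_{\Q_p}}$ admits some irreducible subquotient of dimension at least $2$, it is $1$-regular, and $\rho(G_E)$ is generated by its regular elements. The contradiction will come from the main $p$-adic Hodge-theoretic input of the paper (the result highlighted in the abstract).

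First I would set up the geometric input. After passing to a smooth projective toroidal compactification of $X$ and base-changing to a finite extension $K/\Q_p$ over which it attains semistable reduction (both standard for $U(n-1,1)$ Shimura varieties), the main result from the abstract yields an embedding of $H^1_{\et}(X_\Qbar, \mathcal F)|_{G_K}$ into the mod $p$ reduction $\overline V$ of a semistable $G_K$-representation $V$ with Hodge--Tate weights in $\{0,1\}$. Restricting to the three-dimensional irreducible subrepresentation $\rho$, this yields an embedding $\rho|_{G_K} \hookrightarrow \overline V$.

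Next I would exploit the structure of such mod $p$ reductions via Breuil modules (or Fontaine--Laffaille theory where applicable). Because the Hodge--Tate weights lie in $\{0,1\}$, the inertial characters appearing in any Jordan--H\"older constituent of $\overline V$ --- and hence in any subquotient of $\rho|_{G_K}$ --- are fundamental characters whose exponents lie in a window of width at most $1$. The $1$-regularity hypothesis asserts that the inertial characters of $\rho|_{G_{\Q_p}}$ are pairwise separated by more than this amount; combining this with the Breuil-module constraint forces every Jordan--H\"older constituent of $\rho|_{G_{\Q_p}}$ to be one-dimensional, contradicting the first of our assumptions.

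The main obstacle is the careful $p$-adic Hodge-theoretic bookkeeping required to pass from the constraint on $\overline V$ as a $G_K$-representation (after a possibly ramified base change) to a usable constraint on $\rho|_{G_{\Q_p}}$ itself, and then to combine it with the $1$-regularity hypothesis to rule out higher-dimensional irreducible constituents. The ``generated by regular elements'' hypothesis is expected to enter only to control twists --- ensuring that the semisimple parts of regular elements in $\rho(G_{\Q_p})$ match the diagonal characters read off from the mod $p$ reduction, so that the $1$-regularity separation actually applies as claimed.
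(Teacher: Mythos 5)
Your sketch captures the right overall shape (contradiction via the $p$-adic Hodge-theoretic embedding into the reduction of a potentially semistable lift with small Hodge--Tate weights), but it misses several essential ingredients that the actual argument cannot do without, and misidentifies the role of the ``generated by regular elements'' hypothesis. The key technical input is Theorem~\ref{thm: generic vanishing in small HT weights}, and it does \emph{not} say that Hodge--Tate weights in $\{0,1\}$ plus $1$-regularity forces all Jordan--H\"older factors of $\rho|_{G_{\Qp}}$ to be one-dimensional. Rather, it needs three additional inputs that your plan never produces: (i) control over the \emph{inertial type} of the potentially semistable lift, namely that it is a direct sum of Teichm\"uller lifts $\omegat^{a_j}$ of tame niveau-one characters; (ii) the identity $\det\rhobar|_{\IQp}=\omega^{a_1+\cdots+a_n+n(n-1)/2}$ for those same $a_j$; and (iii) upper bounds on $p$ and the weight range. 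For (i) one cannot simply invoke the non-equivariant Theorem~\ref{thm:arb cohom}; the proof passes to pro-$p$-Iwahori level and uses the equivariant Theorem~\ref{thm:arb cohom equivariant} together with the tame strictly semistable context provided by the Shimura-variety integral model $\cX_1(p)$ (Lemma~\ref{lem:semistable model}, Remark~\ref{rem:shimura gives a strictly semistable context}): the $T(\Fp)$-action and the characters $\psi_j = \alpha_i$ on the components of the special fibre are exactly what pin down the $a_j$. For (ii), one needs to know $\det\rho|_{\IQp}$, and this is precisely where the ``generated by regular elements'' hypothesis enters: it is fed into Lemma~\ref{lem:regular implies equal dets, starting with char polys rather than a repn} together with the Eichler--Shimura relation (Theorem~\ref{thm: Eichler-Shimura}) to show that $\det\rho$ matches the $T_w^{(n)}$-eigenvalue character, which is then compared on $\Zptimes$ to the $\beta$-part of the cohomology. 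This is a \emph{global} input via the Hecke action, not merely a local device ``controlling twists''.

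Your plan also omits the preliminary reductions, which in this theorem (unlike Theorem~\ref{thm: main vanishing for U(2,1) intro}) are non-trivial because no Galois representation $\rho_\m$ is assumed to exist. The paper first shows, using Lemma~\ref{lem: Galois action on H^0}, Eichler--Shimura, and Lemma~\ref{lem:kernels}, that the maximal ideal $\m$ supporting $\rho$ has trivial $H^0$-localisation; this is what lets one reduce to constant coefficients and then, via Hochschild--Serre, to level $K^pI_1$ and a further twist to $K^pI_1^*$, which is the level at which the integral model $\cX_1(p)$ and its $T(\Fp)$-action are available. Without this bookkeeping, the tame strictly semistable context (and hence the inertial type control that Theorem~\ref{thm: generic vanishing in small HT weights} requires) is simply not in play. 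Finally, a small point: the $U(n-1,1)$-Shimura varieties here are already projective (they are attached to division algebras), so no compactification is required.
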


Note that in neither theorem do we make any assumption on the level of the
Shimura variety at $p$.

A Galois representation $\rho_\m$ as in the statement of
Theorem \ref{thm: main vanishing for U(2,1) intro} is known to exist if $\m$
corresponds to a system of Hecke eigenvalues arising
from the reduction mod $p$ of the Hecke eigenvalues attached to some automorphic Hecke
eigenform.
Furthermore, recent work of Scholze \cite{1306.2070}
(which appeared after the first version of this paper was written) 
implies that such a representation exists for {\em any} maximal ideal~$\mathfrak m$.

It seems reasonable to believe that any irreducible sub-$G_F$-representation
of any of the \'etale cohomology groups $H^i_{\et}(X_{\Qbar},\mathcal F)$
for any of the Shimura varieties under consideration should in fact
be a constituent of $\rho_{\mathfrak m}$ for some maximal ideal
$\mathfrak m$ of the Hecke algebra.  However, this doesn't seem to be known,
and relating the ``abstract'' $G_F$-representations $\rho_{\mathfrak m}$
to the ``physical'' $G_F$-representations appearing on \'etale cohomology
is one of the problems we have to deal with in proving our results.

\smallskip
{\bf Application to Serre-type conjectures.} We are able to combine our
results with those of \cite{EGH} so as to establish cases of the weight
part of the Serre-type conjecture of \cite{bib:herzig-thesis} for $U(2,1)$.
More precisely, we have the following result
(where the assertion that $\rhobar$ is modular means that the
corresponding system of Hecke eigenvalues occurs in the mod $p$
cohomology of some $U(2,1)$-Shimura variety;
see Theorem \ref{thm:main EGH} and Lemma \ref{lem:big}.)

\begin{ithm}
  \label{thm:main EGH intro}Suppose that $\rho:G_F\to\GL_3(\Fpbar)$
  satisfies $\SL_3(k)\subset\rho(G_F)\subset\Fpbartimes\SL_3(k)$
  for some finite extension $k/\Fp$, that $\rho|_{\GQp}$ is
  irreducible and $1$-regular, and that $\rho$ is modular of some
  strongly generic weight. Then the set of generic weights for which
  $\rho$ is modular is exactly the set predicted by the recipe of
  \cite{bib:herzig-thesis}.
\end{ithm}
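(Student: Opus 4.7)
The plan is to combine Theorem~A of this paper with the conditional weight part of Serre's conjecture for $U(2,1)$ established in \cite{EGH}. The rough shape is that \cite{EGH} pins down the set of modular weights in terms of Herzig's recipe, conditionally on a vanishing statement for the mod~$p$ cohomology of the relevant unitary Shimura varieties after localization at the maximal ideal $\mathfrak{m}$ cut out by $\rho$. Theorem~A of the present paper supplies exactly this vanishing, so Theorem~C follows by feeding one into the other.

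First I would verify that the hypotheses of Theorem~A are implied by those of Theorem~C. The big-image condition $\SL_3(k)\subset\rho(G_E)\subset\Fpbartimes\SL_3(k)$ appears in both statements, as do the requirements that $\rho|_{G_{\Q_p}}$ be irreducible and 1-regular. The hypothesis that $\rho$ is modular of some strongly generic weight guarantees that $\rho$ arises as $\rho_{\mathfrak{m}}$ for some $\mathfrak{m}$ appearing in the mod~$p$ cohomology of an appropriate $U(2,1)$-Shimura variety $X$ with some canonical coefficient system $\mathcal{F}$, which provides the input to Theorem~A. Here Lemma~\ref{lem:big} does the bookkeeping needed to translate the big-image and local hypotheses on $\rho$ into the precise form required by both Theorem~A and the main theorem of \cite{EGH}.

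Next I would apply Theorem~A to conclude that $H^i_{\et}(X_{\Qbar},\mathcal{F})_{\mathfrak{m}}$ vanishes for $i\ne 2$, for every such Shimura variety and every $\mathfrak{m}$ corresponding to our fixed $\rho$. Plugging this concentration in middle degree into the arguments of \cite{EGH} (which previously assumed this vanishing as a hypothesis) then unconditionally identifies the set of generic weights for which $\rho$ is modular with the Herzig set, giving Theorem~C.

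The main subtlety, and hence the step I would expect to require the most care, is matching conventions between \cite{EGH} and the present paper: one must check that the Shimura varieties, level structures, canonical coefficient systems, and Hecke algebras used in \cite{EGH} fit into the framework of Theorem~A and that localization at $\mathfrak{m}$ means the same thing on both sides. Once this compatibility is set up, the actual deduction is formal: \cite{EGH} provides the recipe modulo the vanishing, and Theorem~A unconditionally supplies the vanishing under precisely the hypotheses assumed in Theorem~C.
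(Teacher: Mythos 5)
Your proposal takes essentially the same approach as the paper: localise at the maximal ideal attached to $\rho$, apply the $U(2,1)$ vanishing theorem (Corollary~\ref{cor: main vanishing for U(2,1)}, via Lemma~\ref{lem:big} to convert the big-image hypothesis into Hypothesis~\ref{hyp:char}), and then feed the resulting concentration of cohomology in degree $2$ into the machinery of \cite{EGH}. One point worth sharpening: \cite{EGH} does not state the $U(2,1)$ weight part of Serre's conjecture conditionally on a vanishing hypothesis; it establishes an abstract axiomatic framework (Theorems~4.3.3 and~6.2.3 together with Axioms~\~{A}1--\~{A}3), which it applies to the compact group $U(3)$, where cohomological concentration in degree $0$ is automatic. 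The paper's proof of Theorem~\ref{thm:main EGH} therefore does genuine work in the last step — defining $S$ and $\widetilde{S}$ via completed cohomology in degree $2$ on the $U(2,1)$-Shimura varieties, and then using the vanishing together with Hochschild--Serre spectral sequences to verify the axioms — so the deduction from \cite{EGH} is somewhat more than the "formal" plug-in your proposal describes, even though the overall strategy is exactly right.
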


\smallskip

{\bf Relationship with a mod $p$ analogue of Arthur's conjecture.}
Arthur has made a quite precise conjecture regarding the systems of Hecke
eigenvalues that appear in the $L^2$-automorphic spectrum of any reductive group
over a number field \cite{Arthur}, which has consequences for the nature of the
Hecke eigenvalues appearing in the cohomology of Shimura varieties \cite[\S 9]{Arthur}.
For our purposes it suffices to describe a qualitative version of these consequences:
namely, Arthur's conjecture 
implies that if $\lambda$ is a system of Hecke eigenvalues appearing in
the degree $i$ cohomology, where $i$ is less than the middle dimension,
then $\lambda$ is attached (in the sense of e.g.\ \cite{BG}, \cite{1206.6730})
 to a reducible Galois representation (i.e.\ one which
factors through a parabolic subgroup of the $L$-group).  

The fragmentary evidence available suggests that a similar statement will be
true for the mod $p$ cohomology of Shimura varieties.  Our
Theorems~\ref{thm: main vanishing for U(2,1) intro} 
and~\ref{thm:main two intro}
give further evidence in this direction.

\smallskip
{\bf $p$-adic Hodge theory.} In order to prove these theorems,
we establish some new results about the $p$-adic Hodge theoretic
properties of the \'etale cohomology of varieties over a number field or $p$-adic field
with coefficients in a field of characteristic $p$. In the first section we establish results about the mod $p$ \'etale cohomology 
of varieties over number fields or $p$-adic fields which, although weaker in
their conclusions, are substantially broader in the scope of their application
than previously known mod $p$ comparison theorems. For example, we
prove the following result (see Theorem \ref{thm:arb cohom} below).

\begin{ithm} 
\label{thm:arb cohom introduction version}
Let $K$ be a finite extension of $\Qp$, and write $G_K$ for
the absolute Galois group of $K$. If $X$ is a
smooth projective variety over $K$ which has semistable reduction,
and if $\rho$ is an irreducible subquotient of the $G_K$-representation
$H^i_{\et}(X_{\overline{K}},\Fpbar)$, then $\rho$
also embeds as a
subquotient of a $G_K$-representation
over $\Fpbar$
which is the reduction
modulo the maximal ideal of a $G_K$-invariant
$\Zpbar$-lattice in a $G_K$-representation
over $\Qpbar$
which is semistable
with Hodge--Tate weights contained in the interval $[-i,0]$.
\end{ithm}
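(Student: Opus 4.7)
The plan is to combine the semistable comparison theorem of Fontaine--Tsuji at the level of rational coefficients with integral $p$-adic Hodge theory (Breuil--Kisin modules, or the equivalent) to pass to mod $p$ coefficients.

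Applied to the semistable variety $X/K$, the Fontaine--Tsuji comparison theorem shows that $V := H^i_{\et}(X_{\overline K}, \Qp)$ is a semistable $G_K$-representation with Hodge--Tate weights in $[-i,0]$. Writing $T := H^i_{\et}(X_{\overline K}, \Zp)$ and $T_{\tf} := T/T_{\tor}$, the quotient $T_{\tf}$ is a $G_K$-stable $\Zp$-lattice in $V$, so $T_{\tf}/p$ (after base change to $\Fpbar$) has all its irreducible constituents already of the form required by the theorem.

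To treat the full cohomology $H^i_{\et}(X_{\overline K}, \Fp)$, I would combine the torsion sequence
\[ 0 \to T_{\tor}/p \to T \otimes_{\Zp} \Fp \to T_{\tf}/p \to 0 \]
with the universal coefficient sequence
\[ 0 \to T \otimes_{\Zp} \Fp \to H^i_{\et}(X_{\overline K}, \Fp) \to H^{i+1}_{\et}(X_{\overline K}, \Zp)[p] \to 0. \]
Any irreducible subquotient of $H^i_{\et}(X_{\overline K}, \Fpbar)$ is then a constituent either of $T_{\tf}/p$ (already handled), of $T_{\tor}/p$, or of $H^{i+1}(\Zp)[p]$.

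The main obstacle is producing, for each such irreducible constituent coming from $T_{\tor}/p$ or $H^{i+1}(\Zp)[p]$, a $\Zpbar$-lattice $L$ in a semistable $\Qpbar$-representation with Hodge--Tate weights in $[-i,0]$ such that the given constituent appears as a subquotient of $\overline{L}$. For $T_{\tor}$ the difficulty is that $T_{\tor}$ vanishes rationally, so its constituents cannot be realised directly as constituents of reductions of lattices in $V$ itself; for $H^{i+1}(\Zp)[p]$ the difficulty is sharper still, since the natural rational companion $H^{i+1}(X_{\overline K}, \Qp)$ is semistable with Hodge--Tate weights in $[-(i+1),0]$, one weight too large. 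I would address both issues uniformly by invoking an integral version of the semistable comparison theorem (in the style of Breuil--Kisin, or via integral log-crystalline cohomology), which realises $R\Gamma_{\et}(X_{\overline K}, \Zp)$ as a perfect complex in a category of integral semistable modules whose mod-$p$ reductions all have Galois constituents of the required form. Obtaining the bound $[-i,0]$ for the $H^{i+1}$-torsion is the hardest step: it will likely require either Poincaré duality (converting the problem to torsion in $H^{2d-i-1}$, where $d = \dim X$) or a direct integral argument at the level of log-crystalline cohomology that exploits the fact that $p$-torsion classes cannot carry the top Hodge weight. This integral weight analysis is the heart of the proof.
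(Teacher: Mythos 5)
Your reduction to the torsion-free quotient $T_{\tf}$ is fine but handles only the easy part. The real content is what you correctly identify as the hard step, and your proposed fix for it is exactly the tool the paper is explicitly designed to avoid relying on. The paper's introduction to Section~\ref{sec:cohom} points out that integral semistable comparison theorems (Fontaine--Messing, Faltings, Caruso) come with restrictions — absolutely unramified base, good reduction, or the bound $ei < p-1$ — and that a central purpose of the paper is precisely to operate where these hypotheses fail. There is no ``integral version of the semistable comparison theorem'' in the generality you would need, and you have not supplied one: you have deferred the hardest step (showing that constituents of $H^{i+1}_{\et}(X_{\overline K},\Zp)[p]$ admit the required weight bound $[-i,0]$, which is one lower than the naive degree would suggest) to an unspecified ``integral weight analysis.'' Poincar\'e duality will not rescue this either: it transfers the torsion to $H^{2d-i-1}$, which naively has Hodge--Tate weights in $[-(2d-i-1),0]$, not obviously an improvement, and the existence of such a saving on torsion classes is exactly the kind of delicate statement (related to failure of Hodge symmetry for torsion) for which counterexamples are known in nearby situations.

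The paper's actual proof (Theorem~\ref{thm:arb cohom}) sidesteps the torsion problem entirely by a geometric d\'evissage. Using a Bertini-type theorem over the ring of integers (Proposition~\ref{prop: Bertini with two hypersurfaces}), one chooses hypersurface sections $Y,Z$ of $X$, transverse to each other, such that $Y$, $Z$, $Y\cap Z$ all have strictly semistable reduction. Three long exact sequences reduce the appearance of $\rho$ in $H^i_{\et}(X_{\overline K},k_E)$ to its appearance either in the cohomology of the lower-dimensional varieties $Y$, $Z$, $Y\cap Z$ (with Tate twists, handled by induction on dimension), or in the cohomology of the pair $H^i_{\et}\bigl((X\setminus Y)_{\overline K},(Z\setminus Y)_{\overline K},k_E\bigr)$. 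The weak Lefschetz theorem shows that this pair cohomology, with $\cO_E$-coefficients, vanishes in all degrees except the middle degree $d=\dim X$, hence is torsion-free there; so its mod $p$ reduction really is the reduction of a Galois-stable lattice in the $E$-cohomology of the pair, which is semistable with Hodge--Tate weights in $[-i,0]$ by the long exact sequences and the rational comparison theorem. Thus the paper replaces the global torsion question for $X$ by a situation where torsion provably vanishes, rather than attempting to control the torsion directly. You should study this argument rather than hope for an integral comparison theorem that is not available.
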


Both the hypotheses and the conclusions of our theorems are rather
precisely tailored to maximise (as far as we are able) their utility
in applications to the analysis of Galois representations occurring in
the cohomology of Shimura varieties, which we give in the third section.

The remaining two sections of the paper are devoted respectively to
using integral $p$-adic Hodge theory (Breuil modules with descent
data) to establish a result related to the reductions of tamely
potentially semistable $p$-adic representations of $G_{\mathbb Q_p}$
(Section~\ref{sec:Breuil}) and to proving some technical results about
group representations (Section~\ref{sec:group}).  The result of
Section~\ref{sec:Breuil} is an essential ingredient in the arguments
of Section~\ref{sec:Shimura}, while the results of
Section~\ref{sec:group} provide sufficient conditions for the various
representation-theoretic hypotheses appearing in the results of
Section~\ref{sec:Shimura} to be satisfied.

\smallskip

{\bf Remark on related papers.} Very general vanishing theorems for
the mod $p$ cohomology of Shimura varieties have been proved by
Lan and Suh (cf. \cite{LanSuh}); however, their results apply only in
situations of good reduction and for coefficients corresponding to
small Serre weights, which makes them unsuitable for the kinds of
applications we have in mind, e.g.\ to the weight part of Serre-type
conjectures. In the ordinary case there is the work of
Mokrane--Tilouine in the Siegel case \cite[\S 9]{MR1944174} and
Dimitrov in the case of Hilbert modular varieties \cite[\S
6.4]{MR2172950}. Finally, in the recent preprint \cite{Shinmodp}, Shin
proves a general vanishing result for cohomology outside of middle
degree for the part of the mod $p$ cohomology which is supercuspidal
at some prime $l\ne p$, by completely different methods to those of
this paper. It seems plausible that via the mod $p$ local Langlands
correspondence for $\GL_n(\Ql)$, Shin's hypothesis could be
interpreted as a condition on the restriction to a decomposition
group at $l$ of the relevant mod $p$ Galois representations, whereas
our conditions involve the restriction to a decomposition group at
$p$, so our results appear to be complementary.

\smallskip

{\bf Acknowledgements.}
We would like to thank Brian Conrad, Florian Herzig, Mark Kisin, Kai-Wen Lan, Madhav Nori,
Bjorn Poonen, Michael Rapoport, Junecue Suh, and Teruyoshi Yoshida for helpful
correspondence and conversations on the subject of this article.
We also thank an anonymous referee for their critical reading of the
paper, which led to the filling of some gaps in the arguments of Subsection~\ref{subsec:equivariant}.

\smallskip

{\bf Conventions.} 
For any field $K$ we let $G_K$ denote a choice of absolute Galois group of $K$.

If $K$ is a finite field then by a Frobenius element in $G_K$ we will always
mean a geometric Frobenius element.  We extend this convention in an evident
way to Frobenius elements at primes in Galois groups of number fields, 
and to Frobenius elements in Galois groups of local fields.

If $K$ is a local field, then we let $\mathcal O_K$ denote the ring of integers
of $K$, we let $I_K$ denote the inertia subgroup of $G_K$,
we let $\W_K$ denote the Weil group of $K$
(the subgroup of $G_K$ consisting of elements whose reduction modulo $I_K$ is
an integral power of Frobenius),
and we let $\WD_K$ denote the Weil--Deligne group of $K$.

If $K$ is a number field, and $v$ is a finite place of $K$, then we
will write $K_v$ for the completion of $K$ at $v$, and $\cO_{K_v}$ for
its ring of integers. We will write $\cO_{K,(v)}$ for the localisation
of $\cO_K$ at the prime ideal $v$.

We will write $\Zpbar$ for the ring of integers in $\Qpbar$ (a fixed
algebraic closure of $\Qp$), and $\m_\Zpbar$ for the maximal ideal of $\Zpbar$.

We let $\omega$ denote the mod $p$ cyclotomic character. We will
denote a Teichm\"uller lift with a tilde, so that for example
$\omegat$ is the Teichm\"uller lift of $\omega$.

We use the traditional normalisation of Hodge--Tate weights,
with respect to which the cyclotomic character has Hodge--Tate weight~$1$.

By a {\em closed geometric point} $\overline{x}$ of a scheme $X$, we mean
a morphism of schemes $\overline{x}: \Spec \Omega \to X$ for a separably closed
field $\Omega$,
whose image is a closed point $x$ of $X$,
and such that the induced embedding $\kappa(x) \hookrightarrow \Omega$
(where $\kappa(x)$ denotes the residue field of $x$) identifies
$\Omega$ with a separable closure of $\kappa(x)$.
If $\overline{x}$ is a closed geometric point of a Noetherian scheme $X$,
then we let $\cO_{X,\overline{x}}$ denote the local ring of $X$ at $\overline{x}$,
i.e.\ the stalk, in the \'etale topology on $X$, of the structure sheaf
of $X$ at $\overline{x}$,
we let $\widehat{\cO_{X,\overline{x}}}$ denote the completion of $\cO_{X,\overline{x}}$,
and we write $\widehat{X_{\overline{x}}} := \Spf \widehat{\cO_{X,\overline{x}}}$, and refer
to $\widehat{X_{\overline{x}}}$ as the formal completion of $X$ along the closed geometric
point $\overline{x}$.

The symbol $G$ will always denote a group; in Section~\ref{sec:Shimura} it will
be a certain algebraic group, and in Section~\ref{sec:group} it will be a finite group.
\section{$p$-adic Hodge theoretic properties of mod $p$ cohomology}
\label{sec:cohom}

\subsection{Introduction}We now describe in more detail our results on the integral
$p$-adic Hodge theory of the \'etale cohomology of projective
varieties, which are perhaps the most novel part of this paper.

It is well-known that integral $p$-adic Hodge theory is less
robust than the corresponding theory with rational coefficients; for
example, the comparison theorems for integral and mod $p$ \'etale
cohomology due to Fontaine--Messing \cite{FM} and Faltings \cite{Fal}
involve restrictions both on the degrees of cohomology and the
dimensions of the varieties considered, and they also require that the
field $K$ be absolutely unramified and that the variety under
consideration be of good reduction.  More recently, Caruso
\cite{Caruso-thesis} has proved an integral comparison theorem in the
case of semistable reduction for possibly ramified fields $K$, but
there are still restrictions: his result requires that $e i < p - 1$,
where $e$ is the absolute ramification index of $K$, and $i$ is the
degree of cohomology under consideration.

These restrictions are unfortunate, since mod   $p$ and integral $p$-adic Hodge theory 
are among the most powerful local tools available for the analysis of Galois representations
occurring in the mod   $p$ \'etale cohomology of varieties.
The premise that underlies the present work is that frequently in such applications,
one does not need a precise comparison theorem relating the mod   $p$ \'etale cohomology
to an analogous structure involving mod   $p$ de Rham or crystalline cohomology.  Rather,
one often uses the comparison theorem merely to draw much less specific conclusions,
such as that the Galois representations occurring in certain mod   $p$
\'etale cohomology spaces are in the essential image of the Fontaine--Laffaille functor, 
applied to Fontaine--Laffaille modules whose Fontaine--Laffaille numbers lie in
some prescribed range.
Our aim is to establish results of the latter type in more general contexts than
they have previously been proved.

The precise direction of
our work is informed to a significant extent by the fairly recent development of a rich
{\em internal} integral $p$-adic Hodge theory, by Breuil \cite{Br}, Kisin \cite{Ki1}, 
Liu \cite{Liu} and others.  What we mean here by the word ``internal'' is that these
developments have been directed not so much at applications to comparison theorems, but 
rather at the purely Galois-theoretic problem of giving a $p$-adic Hodge-theoretic
description of Galois-invariant lattices in crystalline or semistable Galois
representations, and of the mod   $p$ Galois representations that appear in the
reductions of such lattices.   These tools, especially the theory of {\em Breuil modules}
\cite{Br}, which provides the desired description of the mod $p$ representations
arising as reductions of such lattices, have proved very useful in arithmetic applications.
Because of the availability of these tools, it has become both
possible and worthwhile to move beyond the Fontaine--Laffaille context
in integral $p$-adic Hodge theory.
While Caruso's work mentioned above is a significant step in this direction,
an important aspect of the present work will be the consideration of situations 
in which
the bound $e i < p-1$, required for the validity of the comparison theorem
of \cite{Caruso-thesis}, does not hold.

Our goal, then, is to establish in various situations that a Galois representation
appearing in the mod   $p$ \'etale cohomology of a variety
can be embedded in the reduction of a Galois-invariant lattice contained in a
crystalline or semistable Galois representation, with Hodge--Tate weights lying
in some specified range (namely, the range that one would expect given the degree
of the cohomology space under consideration).
Since, in arithmetic situations, one frequently has to make a ramified base-change 
in order to obtain good or semistable reduction, and since the resulting descent data
on the associated Breuil module typically then play an important role
in whatever analysis has to be undertaken, we also prove 
results in certain cases of potentially semistable reduction which allow
us to gain some control over these descent data.  

The idea underlying our approach is very simple.  Suppose that $X$ is a variety
over a $p$-adic field $K$.  If $i$ is some degree of cohomology, then we have a
short exact sequence
$$0 \to H^i_{\et}(X_{\overline{K}},\mathbb Z_p)/p H^i_{\et}(X_{\overline{K}},\mathbb Z_p)
\to H^i_{\et}(X_{\overline{K}},\mathbb F_p) \to H^{i+1}_{\et}(X_{\overline{K}},\mathbb Z_p)
[p] \to 0,$$
as well as an isomorphism
$$\mathbb Q_p\otimes_{\mathbb Z_p} H^i_{\et}(X_{\overline{K}},\mathbb Z_p)
\iso H^i_{\et}(X_{\overline{K}},\mathbb Q_p).$$
Thus, if both $H^i_{\et}(X_{\overline{K}},\mathbb Z_p)$ and $H^{i+1}_{\et}(X_{\overline{K}},\mathbb Z_p)$
are torsion-free,
then we see that $H^i_{\et}(X_{\overline{K}},\mathbb F_p)$ is the reduction
mod   $p$ of $H^i_{\et}(X_{\overline{K}},\mathbb Z_p)$, which 
{\em is} a Galois-invariant lattice in
$H^i_{\et}(X_{\overline{K}},\mathbb Q_p)$.
Furthermore, the usual comparison theorems
of {\em rational} $p$-adic Hodge theory \cite{Fal, Tsu} can be applied to conclude that
this latter representation
is e.g.\ crystalline (if $X$ is proper with good reduction)
or semistable (if $X$ is proper with semistable reduction).

The obstruction to implementing this idea is that we have no reason to believe
in general that
$H^{i}_{\et}(X_{\overline{K}},\mathbb Z_p)$
and
$H^{i+1}_{\et}(X_{\overline{K}},\mathbb Z_p)$
will be torsion-free.
To get around this difficulty, we engage in various d\'evissages using the weak Lefschetz 
theorem.   To explain these, first consider the case when $X$ is a
projective curve and $i = 1$.  In this case all the cohomology with $\mathbb Z_p$-coefficients
is certainly torsion-free,
and so $H^1_{\et}(X_{\overline{K}},\mathbb F_p)$ {\em is} the reduction of a
Galois-invariant lattice in $H^1_{\et}(X_{\overline{K}},\mathbb Q_p)$. 
Now a simple induction using the weak Lefschetz theorem shows that
for {\em any} smooth projective variety $X$ over $K$, there is an embedding
$$H^1_{\et}(X_{\overline{K}},\mathbb F_p) \hookrightarrow
H^1_{\et}(C_{\overline{K}},\mathbb F_p),$$
where $C$ is a smooth projective curve.  Furthermore, if $X$ has good
(respectively  semistable)
reduction, we can ensure that the same is true of $C$.  This gives the desired
result in the case of $H^1$ (ignoring for a moment the problem of obtaining a refinement
dealing with descent data in the potentially semistable case).

For higher degrees of cohomology, a more elaborate d\'evissage is required.  
The key point, again established via the weak Lefschetz theorem,
is that if $X$ is smooth and projective of dimension $d$, and if $Y$ and $Z$
are sufficiently generic hyperplane sections of $X$, then the cohomology
of the pair
$\bigl((X\setminus Y)_{\overline{K}},(Z\setminus Y)_{\overline{K}}\bigr)$, with either
$\mathbb Z_p$ or $\mathbb F_p$ coefficients,
vanishes in degrees other than $d$ (see Subsection~\ref{subsec:vanishing outside middle}
of the appendix; note that $X\setminus Y$ is affine), so that
$H^d_{\et}\bigl((X\setminus Y)_{\overline{K}},(Z\setminus Y)_{\overline{K}}, \mathbb Z_p)$
is torsion-free and is thus a Galois-invariant lattice in
$H^d_{\et}\bigl((X\setminus Y)_{\overline{K}},(Z\setminus
Y)_{\overline{K}}, \mathbb Q_p)$, which is potentially semistable by
\cite{Go20111127}, and
whose reduction is equal to 
$H^d_{\et}\bigl((X\setminus Y)_{\overline{K}},(Z\setminus Y)_{\overline{K}}, \mathbb F_p)$.
Such relative cohomology spaces are the essential ingredient of the {\em basic lemma} of 
Be{\v\i}linson \cite{Bei}, and we learned the
idea of using them as building blocks for the cohomology of varieties from Nori \cite{Nori},
who has used the basic lemma as the foundation of his approach to the construction
of motives.  Indeed, our present approach to integral $p$-adic Hodge theory
was inspired by Be{\v\i}linson's and Nori's work.

\smallskip

\subsection{Bertini-type theorems} We begin by giving a straightforward
generalisation of some of the results of \cite{JS}, which
build on the results of \cite{Poonen} to prove Bertini-type theorems for
varieties with semistable reduction over a discrete valuation ring. It
will be convenient to allow $K$ to denote either a number field, or a
field of characteristic zero that is complete with respect to a
discrete valuation with perfect residue field $k_K$ of
characteristic~$p$.  We abbreviate these two situations as ``the
global case'' and ``the local case'' respectively, and in the former
case we will let $v$ denote a place of $K$ dividing
$p$.

We recall the following definition:

\begin{defn}
{\em
Suppose firstly that we are in the local case.
We then say that a projective $\cO_K$-scheme $\cX$ is {\em semistable}
if it is regular and flat over $\Spec\cO_K$, and if the special fibre $\cX_s$ is
reduced and is a normal crossings divisor; 
equivalently,
a finite-type $\cO_K$-scheme $\cX$ is semistable 
if at each closed geometric point $\overline{x}$ of~$\cX_s$,
there is an isomorphism of 
complete local rings
\begin{equation*}
\widehat{\cO_{\overline{x},\cX}} \cong
\cshO{K}[[x_1,\dots,x_n]]/(x_1\cdots x_m - \unif_K),
\end{equation*}
where $\cshO{K}$ is the completion of the strict Henselisation of $\cO_K$ (equivalently,
the completion of the ring of integers in the maximal unramified algebraic extension of~$K$),
the element $\unif_K$ is a uniformiser of $\cshO{K}$,
and $1 \leq m \leq n$. We say that $\cX$ is strictly semistable if it
is semistable, and if the special fibre $\cX_s$ is a strict normal
crossings divisor.

Again in the local case, we say that a smooth projective $K$-scheme has
{\em good reduction}
if it admits a smooth projective model over $\mathcal O_K$, and that
it has {\em {\em (}strictly{\em )} semistable
reduction} if it admits an extension to a projective $\mathcal O_K$-scheme
which is (strictly) semistable in the sense of the preceding definition.

In the global case, we say that a smooth projective $K$-scheme has
good reduction at $v$ if it admits a smooth projective model over $\cO_{K,(v)}$,
and that it has (strictly) semistable reduction
at $v$ if it admits a (strictly) semistable projective model over $\cO_{K,(v)}$, i.e.\ a projective
model over $\cO_{K,(v)}$ whose base-change over $\CO_{K_v}$ is
(strictly) semistable 
in the sense of the preceding definition. }
\end{defn}

\begin{remark}
\label{rem:semistable}
{\em Note that our definition of a semistable $\cO_K$-scheme
(putting ourselves in the local case) includes the requirement
that the scheme be regular.  This is the definition that is frequently
adopted in the theory of semistable reduction,
and it is well-suited to our intended applications.
Recall that, with this definition, semistability is {\em not} preserved
under the base-change to $\cO_L$, if $L$ is a finite extension of $K$,
unless $L/K$ is unramified or the original scheme is in fact smooth
over $\cO_K$; see also Remark~\ref{rem:rigid} below.
}
\end{remark}

\begin{prop}\label{prop: Bertini with two hypersurfaces}
Let $X$ be a smooth projective variety
over $K$ with strictly
semistable {\em (}respectively good{\em )} reduction {\em (}at $v$, in the global case{\em )}.
Then there are smooth hypersurface sections $Y$ and $Z$ of $X$
{\em (}with respect to an appropriately
chosen embedding of $X$ into some projective space{\em )}
such that $Y$ and $Z$
intersect transversely, and all of $Y$, $Z$, $Y\cap Z$ have strictly
semistable {\em (}respectively good{\em )} reduction {\em (}at $v$, in the global case{\em )}.

\end{prop}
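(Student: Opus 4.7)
The plan is to reduce to the arithmetic Bertini theorems of Jannsen--Saito \cite{JS}, which refine Poonen's method \cite{Poonen}. First I would fix an integral projective model $\mathcal{X}$ of $X$ over $\cO_K$ (respectively $\cO_{K,(v)}$ in the global case) which is strictly semistable (respectively smooth); such a model exists by hypothesis. After replacing a given projective embedding by its $d$-uple Veronese embedding for $d \gg 0$, I can work with hypersurface sections of arbitrarily large fixed degree in a suitable embedding $\mathcal{X} \hookrightarrow \Pj^N$.

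In the strictly semistable case, the theorem of Jannsen--Saito produces, among the hypersurfaces of some fixed degree in $\Pj^N$, a set of positive density (in Poonen's sense) consisting of those $H$ for which $\mathcal{X}\cap H$ is again strictly semistable. I would choose such an $H$ and set $\mathcal{Y}:=\mathcal{X}\cap H$, with generic fiber $Y$. To produce $Z$, I would apply the same theorem, but now to the pair $(\mathcal{X},\mathcal{Y})$: the density argument in \cite{JS} is a collection of independent local conditions at the closed points of $\mathcal{X}$ (including those lying on $\mathcal{Y}$), and one can simultaneously impose the conditions that $\mathcal{X}\cap H'$ be strictly semistable, that $H'$ meet $\mathcal{Y}$ transversely, and that $\mathcal{Y}\cap H'$ be strictly semistable as well. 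Setting $\mathcal{Z}:=\mathcal{X}\cap H'$, the generic fibers $Y$, $Z$, and $Y\cap Z$ then all have strictly semistable reduction at $v$, and $Y$ and $Z$ meet transversely. The good reduction case is the analogue obtained by replacing \emph{strictly semistable} with \emph{smooth} throughout, and it reduces to the original Bertini theorem of \cite{Poonen}.

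The main obstacle is the second step: checking that the conditions ``$\mathcal{Y}\cap H'$ is regular, flat, and has reduced special fiber which is a strict normal crossings divisor'' can be cast as Poonen-style local conditions at each closed point of $\mathcal{Y}$ that are compatible with the simultaneous conditions already being imposed to make $\mathcal{X}\cap H'$ strictly semistable. Concretely, at a closed geometric point $\overline{x}$ of the special fiber of $\mathcal{Y}$, the structure of $\mathcal{X}$ is modelled on $\cshO{K}[[x_1,\dots,x_n]]/(x_1\cdots x_m-\unif_K)$, and $\mathcal{Y}$ defines one of the strata in this picture; the conditions one must impose on the restriction of an equation of $H'$ to each such completed local ring are open modulo fixed finite powers of the maximal ideal, and one shows, as in \cite{Poonen, JS}, that they are satisfied on a subset of positive density.

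In the global case the argument is identical after localising at $v$, since the conditions away from $v$ only involve smoothness of generic fibers of hypersurface sections of $X$ and of $Y$, which Poonen's theorem (applied over $K$) guarantees for sufficiently general hypersurfaces; these finitely many density conditions at $v$ and generically can all be imposed simultaneously.
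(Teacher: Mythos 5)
Your overall strategy agrees with the paper's: fix a strictly semistable (resp.\ smooth) integral model, invoke Jannsen--Saito to produce $\mathcal Y$, and then try to find a second hypersurface compatible with $\mathcal Y$. However, the central technical step is not actually carried out. You identify the problem correctly yourself (``The main obstacle is the second step''), but then only assert that the relevant conditions ``can be cast as Poonen-style local conditions'' and that ``one shows, as in \cite{Poonen, JS}, that they are satisfied on a subset of positive density.'' That is precisely the content that needs a proof, not a citation; re-running Poonen's sieve from scratch with the compound local conditions at points on the normal-crossings locus of $(\cX)_s$, and verifying the required compatibilities, is nontrivial. The paper avoids this by a cleaner reduction: it cites Remark~0~(ii), Lemma~1, and the remark preceding Corollary~1 of \cite{JS}, which together show that once $\mathcal Y$ is chosen, finding $Z$ reduces to a much simpler statement purely about the special fibre over $k_K$, namely that finitely many smooth projective subschemes of $\mathbb P^N_{/k_K}$ admit a common transversal hypersurface section. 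This is then dispatched by a single application of Theorem~1.3 of \cite{Poonen} with an explicit and manifestly positive-measure choice of the sets $U_P$. Your proposal does not make this reduction, so the density claim remains a gap.

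Your treatment of the global case is also imprecise. You say the argument is ``identical after localising at $v$'' and appeal to ``Poonen's theorem (applied over $K$)''; but Poonen's theorem is a finite-field statement, and the issue in the global case is producing a hypersurface defined over the number field $K$ whose base change to $\cO_{K_v}$ has the required semistability. The paper handles this by first producing a $K_v$-point of the parameter space of hypersurfaces, observing that the set of suitable hypersurfaces contains a non-empty $v$-adic open, and then using density of $K$ in $K_v$ to find a $K$-rational point. That openness/density step is missing from your proposal; simply asserting that the finitely many conditions ``can all be imposed simultaneously'' does not supply it.
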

\begin{proof}We firstly handle the local case. Choose an extension $\cX$
of $X$ to an $\cO_K$-scheme that is projective and smooth (in the good
reduction case) or strictly semistable (in the strictly
semistable reduction case), and fix
an embedding of $\cX$ into some projective space over $\cO_K$.  By Corollary 0 and
  Corollary 1 of \cite{JS} (or, perhaps more precisely, by their proofs)
  we can find a hypersurface section
  $\cY$ of $\cX$ such that $\cY$ is again either smooth or strictly semistable 
  over $\cO_K$. We take $Y$ to be the generic fibre of $\cY$.
  By Remark~0~(ii) of~\cite{JS}, together with Lemma~1 and
  the remark immediately before Corollary~1 of {\em op.\ cit.}, we see
  that in order to find $Z$ it is enough to check that given a finite
  collection $X_1,\dots,X_n$ of smooth projective schemes in
  $\mathbb{P}^N_{/k_K}$, there is a common hypersurface section
  meeting each of them transversely. This is an immediate consequence
  of Theorem 1.3 of \cite{Poonen}, taking the set $U_P$ there to be
  the subset of $\hat{\cO}_P$ consisting of the $f$ such that $f=0$ is transverse to
  each $X_i$ at $P$. (Since this set contains all the $f$ which do not
  vanish at~$P$, and in particular contains all the $f$ congruent modulo the
  maximal ideal to
  particular choice of $f$, it has positive Haar measure.)

  We now pass to the global case. Let $\cX$ be a smooth (in the good reduction case) or strictly
  semistable (in the strictly semistable
  reduction case) projective model of $X$ over $\cO_{K,(v)}$.  
  Let $P^*_d$ denote the projective space (over $\cO_{K,(v)}$) of degree $d$ hypersurfaces in
  the ambient projective space containing $\cX$.
  Applying the argument in the local case to the base-change
  $\cX_{/\cO_{K_v}}$, we see that for some $d \geq 1$, 
  there is a $K_v$-valued point of $P^*_d$ corresponding to a hypersurface section
  of $\cX_{/\cO_{K_v}}$ having either smooth or semistable intersection (depending
  on the case we are in) with $\cX_{/\cO_{K_v}}$. 
  Furthermore, this point lies in an
  affinoid open subset of $P^*_{d /K_v}$ (the preimage of an open set in
  the special fibre of $P^*_d$), all of  whose points correspond to hyperplane sections of
  $\cX_{/\cO_{K_v}}$ with either smooth or strictly semistable intersection.
  (See Remark~0~(i)
  of~\cite{JS}, as well as the proofs of Theorems~0 and~1 of the same reference.)
The set of $K_v$-points of this affinoid open is a non-empty open subset of~$P^*_{d}(K_v)$.  Since $K$ is dense in $K_v$, we
  see that this intersection also contains a $K$-point of~$P^*_d$, which gives the required hypersurface
  section~$Y$. We find the hypersurface section $Z$ by applying the same argument.
\end{proof}

\subsection{Cohomology in degree $1$}
Our arguments in degree $1$ are rather simpler than in general degree,
so we warm up with this case. (In fact, our result in this case is
slightly stronger than our result in general degree, as we do not need
to semisimplify the representation, so this result is not completely
subsumed by our later results in general degree.) Fix a prime $p$.  Let $K$ denote a field
of characteristic zero, complete with respect to a discrete valuation,
with ring of integers $\mathcal O_K$ and residue field~$k$, assumed to
be perfect of characteristic $p$.  Let $\overline{K}$ denote an
algebraic closure of $K$, and set $G_K := \Gal(\overline{K}/K)$.

\begin{theorem}
\label{thm:first cohom}
If $X$ is a smooth projective variety over $K$ which
has good {\em (}respectively  strictly semistable{\em )} reduction,
then $H^1_{\et}(X_{\overline{K}},\mathbb F_p)$ embeds $G_K$-equivariantly
into the reduction
modulo $p$ of a $G_K$-invariant lattice in a crystalline {\em (}respectively 
semistable{\em )} $p$-adic representation
of $G_K$ whose Hodge--Tate weights are contained in $[-1,0]$.
\end{theorem}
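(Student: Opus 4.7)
The plan is to reduce, via the weak Lefschetz theorem and iterated application of Proposition \ref{prop: Bertini with two hypersurfaces}, to the case where $X$ is a smooth projective curve; the result in the curve case then follows immediately from the rational $p$-adic comparison theorems, since for a curve the relevant integral \'etale cohomology groups are torsion-free.

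Concretely, I would iterate Proposition \ref{prop: Bertini with two hypersurfaces} (taking $Y$ at each step and discarding $Z$, since for the degree~$1$ argument we only need a single smooth hypersurface section) to produce a chain
$$X = X_d \supset X_{d-1} \supset \cdots \supset X_1 =: C,$$
where $d = \dim X$ and each $X_m$ is a smooth projective $K$-variety of dimension $m$, realised as a smooth hypersurface section of $X_{m+1}$, with good (respectively strictly semistable) reduction. The weak Lefschetz theorem for \'etale cohomology with $\F_p$-coefficients then ensures that each restriction map $H^i_\et(X_{m,\Kbar}, \F_p) \to H^i_\et(X_{m-1,\Kbar}, \F_p)$ is an isomorphism for $i < m-1$ and is injective for $i = m-1$. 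Composing these maps for $i = 1$ produces a $G_K$-equivariant injection
$$H^1_\et(X_{\Kbar}, \F_p) \hookrightarrow H^1_\et(C_{\Kbar}, \F_p),$$
so it suffices to prove the theorem with $C$ in place of $X$.

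For the curve case, $H^1_\et(C_{\Kbar}, \Z_p)$ is torsion-free (it is the free $\Z_p$-module of rank $2g$ attached to the $p$-adic Tate module of the Jacobian), as is $H^2_\et(C_{\Kbar}, \Z_p) \cong \Z_p(-1)$. The short exact sequence
$$0 \to H^1_\et(C_{\Kbar}, \Z_p)/p \to H^1_\et(C_{\Kbar}, \F_p) \to H^2_\et(C_{\Kbar}, \Z_p)[p] \to 0$$
therefore collapses to an isomorphism $H^1_\et(C_{\Kbar}, \F_p) \cong H^1_\et(C_{\Kbar}, \Z_p)/p$. Thus $H^1_\et(C_{\Kbar}, \F_p)$ is the mod~$p$ reduction of the $G_K$-invariant $\Z_p$-lattice $H^1_\et(C_{\Kbar}, \Z_p)$ inside $H^1_\et(C_{\Kbar}, \Q_p)$, and the comparison theorems of \cite{Fal} and \cite{Tsu} identify this latter representation as crystalline (respectively semistable) with Hodge--Tate weights in $[-1,0]$, completing the argument.

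The only real obstacle along the way is ensuring that the iterated hypersurface sections retain good or strictly semistable reduction at every stage, which is exactly the content of Proposition \ref{prop: Bertini with two hypersurfaces}; modulo this, the proof is a straightforward assembly of standard ingredients, and the case of general degree treated in the sequel will require replacing this d\'evissage by the more delicate one involving the pair $(X \setminus Y, Z \setminus Y)$.
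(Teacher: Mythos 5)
Your argument is correct and follows essentially the same route as the paper's proof: reduce to the curve case via repeated smooth hypersurface sections of good (resp.\ strictly semistable) reduction, use weak Lefschetz with $\F_p$-coefficients to get the injection on $H^1$, and conclude in the curve case from torsion-freeness of $H^1$ and $H^2$ with $\Z_p$-coefficients together with the comparison theorem of \cite{Tsu}. The only cosmetic differences are that the paper phrases the reduction as an induction on $\dim X$ rather than unrolling an explicit chain, and for this theorem it cites Corollaries~0 and~1 of \cite{JS} directly instead of going through Proposition~\ref{prop: Bertini with two hypersurfaces} (which is overkill here since only one hypersurface section is needed), but these are equivalent.
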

\begin{proof}
We proceed by induction on the dimension $d$ of $X$.
If $d \leq 1$ then $H^1_{\et}(X_{\overline{K}},\mathbb F_p)$
is isomorphic to the reduction modulo $p$
of
$H^1_{\et}(X_{\overline{K}},\mathbb Z_p)$,
and the latter space (being torsion-free, by virtue of our assumption
on $d$) is in turn a lattice in
$H^1_{\et}(X_{\overline{K}},\mathbb Q_p)$,
which is crystalline (respectively  semistable) with Hodge--Tate weights lying in $[-1,0],$
by the main result of \cite{Tsu}.

Suppose now that $d > 1$.  It follows from Corollary~0 (respectively  Corollary 1)
of \cite{JS} that if $X$ has good reduction (respectively  strictly semistable reduction)
then we may choose a smooth hypersurface section $Y$ of $X$ defined over $K$
which has good (respectively strictly semistable) reduction.
Our induction hypothesis applies to show that
$H^1_{\et}(Y_{\overline{K}},\mathbb F_p)$
embeds as a subobject
of a $G_K$-representation over $\mathbb F_p$ which is the reduction
modulo $p$ of a $G_K$-invariant lattice in a crystalline (respectively  semistable)
$p$-adic representation
of $G_K$ whose Hodge--Tate weights are contained in $[-1,0]$.
On the other hand, the weak Lefschetz Theorem with $\mathbb
F_p$-coefficients
(\cite[Exp.\ XIV Cor.\ 3.3]{MR0354654}) implies that the natural (restriction) map
$H^1_{\et}(X_{\overline{K}},\mathbb F_p) \rightarrow
H^1_{\et}(Y_{\overline{K}},\mathbb F_p)$
is an embedding (because $1 \leq d-1$ by assumption).
This completes the proof.
\end{proof}

\subsection{Cohomology of arbitrary degree}
As always we fix a prime $p$.  The necessary d\'evissages in this
subsection will be more elaborate than in the previous one, and so, to
maximise the utility of our results for later applications, it will be
convenient to again allow $K$ to denote either a number field or a
field of characteristic zero that is complete with respect to a
discrete valuation with perfect residue field of characteristic~$p$.
In applications it will also be useful to have flexibility in the
choice of coefficients in the various cohomology spaces that we
consider, and to this end we fix an algebraic extension $E$ of $\mathbb
Q_p$, with ring of integers $\mathcal O_E$ and residue field
$k_E$. (In applications, $E$ will typically either be a finite
extension of $\Qp$, or else will be $\Qpbar$.) 

We now recall some consequences of the weak Lefschetz Theorem.  Among
other notions, we will use the \'etale cohomology of a pair consisting
of a variety and a closed subvariety; a precise definition of this
cohomology, and a verification of its basic properties (such as those
recalled in the next paragraph), is included in Appendix \ref{appendix: cohomology results on
  pairs etc}.

Let $X$ be a smooth projective variety of dimension $d$ over $K$, and
suppose that $Y$ and $Z$ are two smooth hypersurface sections of $X$,
chosen so that $Y\cap Z$ is also smooth.  Let $A$ denote either $E$,
$\mathcal O_E$, or $k_E$.  In either the first or last case, the
spaces $H^i_{\et}\bigl((X\setminus Y)_{\overline{K}}, (Z\setminus
Y)_{\overline{K}} , A\bigr)$ and $H^{2 d - i}_{\et}\bigl((X\setminus
Z)_{\overline{K}}, (Y\setminus Z)_{\overline{K}} , A\bigr)(d)$ are
naturally dual to one another, for each integer $i$. The weak
Lefschetz Theorem implies that the former space vanishes when $i > d$,
and that the latter space vanishes when $2 d - i > d$, i.e.\ when $i <
d$. Thus in fact both spaces vanish unless $i = d$.  It then follows
that both spaces vanish unless $i = d$ in the case when $A$ is taken
to be $\mathcal O_E$ as well, and hence that, when $i = d$, both
spaces are torsion-free.

Let $\overline{K}$ denote an
algebraic closure of $K$, and set $G_K := \Gal(\overline{K}/K)$.
Now let $\rho:G_K \to \GL_n(k_E)$ be irreducible and continuous.
In the global case, we fix a place $v$ of $K$ lying over $p$, 
and a decomposition group $D_v \subset G_K$ for $v$.

\begin{theorem} 
\label{thm:arb cohom}
If $X$ is a smooth projective variety over $K$ which
has strictly semistable {\em (}respectively good{\em )} reduction {\em (}at $v$,
if we are in the global case{\em )},
and if $\rho$ embeds as a subquotient of
$H^i_{\et}(X_{\overline{K}},k_E)$, 
then $\rho$
also embeds as a
subquotient of a $G_K$-representation
over $k_E$
which is the reduction
modulo the uniformiser of a $G_K$-invariant
$\mathcal O_E$-lattice in a $G_K$-representation
which is semistable {\em (}respectively crystalline{\em )} {\em (}at $v$, in the global case{\em )}
with Hodge--Tate weights contained in the interval $[-i,0]$.
\end{theorem}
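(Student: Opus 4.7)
The plan is to proceed by induction on $d := \dim X$. Using Proposition~\ref{prop: Bertini with two hypersurfaces}, I would fix smooth hypersurface sections $Y$ and $Z$ of $X$ meeting transversely and such that $Y$, $Z$, and $Y\cap Z$ all have strictly semistable (resp.\ good) reduction (at $v$, in the global case). The base case $d\le 1$ is immediate: for trivial dimension reasons every integral cohomology group $H^i(X_{\overline{K}},\mathcal{O}_E)$ is torsion-free, so $H^i(X_{\overline{K}},k_E)$ is itself the mod-$\varpi$ reduction of the $G_K$-invariant $\mathcal{O}_E$-lattice $H^i(X_{\overline{K}},\mathcal{O}_E)\subset H^i(X_{\overline{K}},E)$, and this last representation is semistable (resp.\ crystalline) at $v$ with Hodge--Tate weights in $[-i,0]$ by \cite{Tsu}.

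For the inductive step with $d\ge 2$ and $i<d$, the weak Lefschetz theorem with $k_E$-coefficients (\cite[XIV 3.3]{MR0354654}) supplies an embedding $H^i(X_{\overline{K}},k_E)\hookrightarrow H^i(Y_{\overline{K}},k_E)$, so $\rho$ is a subquotient of $H^i(Y_{\overline{K}},k_E)$ and the inductive hypothesis applied to $Y$ (of dimension $d-1$) concludes. For $i>d$, Poincar\'e duality modulo $\varpi$ shows that $\rho^{\vee}(-d)$ is a subquotient of $H^{2d-i}(X_{\overline{K}},k_E)$, with $2d-i<d$; the previous case then produces an ambient semistable (resp.\ crystalline) representation $V$ with Hodge--Tate weights in $[-(2d-i),0]$, and one checks that $V^{\vee}(-d)$ serves as an ambient semistable (resp.\ crystalline) representation for $\rho$, with Hodge--Tate weights in $[-d,d-i]\subset[-i,0]$.

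The essential case is $i=d$. Applying, in succession, the localization long exact sequence for the smooth divisor $Y\hookrightarrow X$ (together with the Gysin isomorphism $H^j_Y(X)\cong H^{j-2}(Y)(-1)$), the pair long exact sequence for $(X\setminus Y,Z\setminus Y)$, and the localization sequence for $Y\cap Z\hookrightarrow Z$, one sees that any irreducible subquotient of $H^d(X_{\overline{K}},k_E)$ must be a Jordan--H\"older factor of at least one of
\[
H^{d-2}(Y_{\overline{K}},k_E)(-1),\ H^d((X\setminus Y)_{\overline{K}},(Z\setminus Y)_{\overline{K}},k_E),\ H^d(Z_{\overline{K}},k_E),\ H^{d-1}((Y\cap Z)_{\overline{K}},k_E)(-1).
\]
The first, third, and fourth are dispatched by the inductive hypothesis applied to $Y$, $Z$, and $Y\cap Z$ (of dimensions $d-1$, $d-1$, and $d-2$ respectively), together with the observation that tensoring by $k_E(-1)$ shifts Hodge--Tate weights by $-1$, so the resulting bands $[-(d-1),-1]$, $[-d,0]$, and $[-d,-1]$ all sit inside $[-d,0]$. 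The second term is the heart of the argument: by the weak Lefschetz theorem for pairs (Subsection~\ref{subsec:vanishing outside middle} of the appendix) the integral pair cohomology $H^*((X\setminus Y)_{\overline{K}},(Z\setminus Y)_{\overline{K}},\mathcal{O}_E)$ vanishes outside degree $d$, so its degree-$d$ piece is a $G_K$-invariant $\mathcal{O}_E$-lattice whose mod-$\varpi$ reduction is precisely $H^d((X\setminus Y)_{\overline{K}},(Z\setminus Y)_{\overline{K}},k_E)$, and whose rational version is semistable (resp.\ crystalline) at $v$ with Hodge--Tate weights in $[-d,0]$ by \cite{Go20111127}.

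The main obstacle is exactly this last input: that the open-pair cohomology $H^d((X\setminus Y)_{\overline{K}},(Z\setminus Y)_{\overline{K}},-)$ has the expected $p$-adic Hodge-theoretic properties, namely torsion-freeness of the integral version (which comes from weak Lefschetz for pairs, to be developed in the appendix) and semistability/crystallinity with Hodge--Tate weights in $[-d,0]$ of the rational version (imported from \cite{Go20111127}). Once these are in hand, the d\'evissage sketched above is essentially formal.
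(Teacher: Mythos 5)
Your proof is correct, and the key case $i=d$ is handled exactly as in the paper: the same four-term d\'evissage through $H^{i-2}(Y)(-1)$, $H^{i}(Z)$, $H^{i-1}(Y\cap Z)(-1)$, and the pair cohomology $H^{i}\bigl((X\setminus Y),(Z\setminus Y)\bigr)$, with the pair term controlled by torsion-freeness (weak Lefschetz for pairs) and \cite{Go20111127}. The only difference is organizational: the paper runs this d\'evissage uniformly for every $i$ (the pair term automatically vanishes when $i\neq d$, so the subquotient lands in one of the three pieces of smaller dimension), whereas you dispose of $i<d$ directly by weak Lefschetz and of $i>d$ by Poincar\'e duality, which costs you a separate argument about dualizing lattices and checking the resulting Hodge--Tate band $[-d,d-i]\subset[-i,0]$; both routes are correct, and the paper's is marginally more economical.
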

\begin{proof}
We proceed by induction on the dimension of $X$. Suppose initially
that we are in the strictly semistable reduction case.
By Proposition~\ref{prop: Bertini with two hypersurfaces}
we can and do choose smooth hypersurface sections $Y$ and $Z$, having
smooth intersection, and such
that $Y$, $Z$, and $Y\cap Z$ all have strictly semistable reduction.

We then consider the long exact sequences (cf.~\cite[Chap.\ III Prop.\ 1.25]{MR559531} for the first two, which are local cohomology long exact sequences,
 and Appendix \ref{appendix: cohomology results on
  pairs etc} for the third, which is the long exact sequence of the pair 
$(X\setminus Y, Z\setminus Y)$)
$$\cdots \to H^i_{Y_{\overline{K}},\et}(X_{\overline{K}}, A) \to
H^i_{\et}(X_{\overline{K}}, A) \to H^i_{\et}\bigl((X\setminus Y)_{\overline{K}}, A\bigr) \to \cdots,$$
$$\cdots \to H^i_{(Y\cap Z)_{\overline{K}},\et}(Z_{\overline{K}}, A) \to
H^i_{\et}(Z_{\overline{K}}, A) \to H^i_{\et}\bigl((Z\setminus Y)_{\overline{K}}, A\bigr) \to \cdots,$$
and
$$\cdots \to H^i_{\et}\bigl((X\setminus Y)_{\overline{K}}, (Z\setminus Y)_{\overline{K}}, A\bigr)
\to H^i_{\et}\bigl((X\setminus Y)_{\overline{K}}, A\bigr) \to H^i_{\et}\bigl((Z\setminus Y)_{\overline{K}},
A\bigr)
\to \cdots,$$
with $A$ taken to be either $E$ or $k_E$.
We also recall (cf.\ \cite[Exp.\ XIV \S 3]{MR0354654}) that there are canonical isomorphisms
$H^{i-2}_{\et}(Y_{\overline{K}},A)(-1)
\iso H^i_{Y_{\overline{K}},\et}(X_{\overline{K}},A)$
and
$H^{i-2}_{\et}\bigl((Y\cap Z)_{\overline{K}},A\bigr)(-1) \iso
H^i_{(Y\cap Z)_{\overline{K}},\et}(Z_{\overline{K}},A)$.

When $A = E$, all the cohomology spaces that appear are potentially
semistable \cite{Go20111127}. Since $H^i_{\et}(X_{\overline{K}},E)$,
$H^i_{\et}(Y_{\overline{K}},E)$, and $H^i_{\et}(Z_{\overline{K}},E)$
are semistable with Hodge--Tate weights lying in $[-i,0]$, we see
that $H^i_{\et}\bigl((X\setminus Y)_{\overline{K}},E\bigr)$,
$H^i_{\et}\bigl((Z\setminus Y)_{\overline{K}},E\bigr)$, and
$H^i_{\et}\bigl((X\setminus Y)_{\overline{K}},(Z\setminus Y)_{\overline{K}},E\bigr)$
are semistable, with Hodge--Tate weights lying in $[-i,0]$.

Now taking $A = k_E$, we see that since $\rho$ is irreducible, it
appears either as a subquotient of
$H^{i-2}_{\et}(Y_{\overline{K}},k_E)(-1),$ as a subquotient of
$H^i_{\et}(Z_{\overline{K}},k_E)$, as a subquotient of $H^{i-1}(Y\cap
Z)(-1)$, or else as a subquotient of $H^i_{\et}\bigl((X\setminus
Y)_{\overline{K}},(Z\setminus Y)_{\overline{K}},k_E\bigr).$ In the first three cases
the theorem follows by induction on the dimension.  In the final case,
the conclusion follows from the vanishing theorem noted above; namely,
$H^i_{\et}\bigl((X\setminus Y)_{\overline{K}},(Z\setminus Y)_{\overline{K}},E\bigr)$
is the desired semistable representation, with invariant lattice
$H^i_{\et}\bigl((X\setminus Y)_{\overline{K}},(Z\setminus Y)_{\overline{K}},\mathcal
O_E\bigr)$, whose reduction $H^i_{\et}\bigl((X\setminus
Y)_{\overline{K}},(Z\setminus Y)_{\overline{K}},k_E\bigr)$ contains $\rho$.

Finally, suppose that we are in the good reduction case. Again, by
Proposition~\ref{prop: Bertini with two hypersurfaces}, we can and do
choose smooth hypersurface sections $Y$ and $Z$, having smooth
intersection, and such that $Y$, $Z$, and $Y\cap Z$ all have
good reduction. Applying the same argument as in the previous paragraph, we see
by induction on the dimension of $X$ that it is
enough to check that $H^i_{\et}\bigl((X\setminus
Y)_{\overline{K}},(Z\setminus Y)_{\overline{K}},E\bigr)$ is crystalline;
but this follows immediately from Theorem 1.2 of~\cite{Go20111127}. (Note that
if in the notation of~\cite{Go20111127} we take $D^1=Z$, and $D^2=Y$, then by
(\ref{eqn:Faltings}) below we see that $H^i_{\et}\bigl((X\setminus
Y)_{\overline{K}},(Z\setminus Y)_{\overline{K}},E\bigr)$ appears on the left hand side of the Hyodo--Kato
isomorphism in the statement of Theorem 1.2 of~\cite{Go20111127}, and since we
have already shown that $H^i_{\et}\bigl((X\setminus
Y)_{\overline{K}},(Z\setminus Y)_{\overline{K}},E\bigr)$ is semistable, it is enough to show
that the monodromy operator $N$ vanishes on the right hand side of the Hyodo--Kato
isomorphism. This follows easily from the definition of this operator as a
boundary map, as all objects concerned arise from base change from objects with
trivial log structures.)
\end{proof}

\subsection{Equivariant versions}
\label{subsec:equivariant}
In practice, we will need equivariant analogues of the preceding results.
As in the preceding section, we let $K$ denote either a number field (``the global
case'') or a field
of characteristic zero that is complete with respect to a discrete valuation with
perfect residue field of characteristic $p$ (``the local case'').
We let $\overline{K}$ denote an
algebraic closure of $K$, and set $G_K := \Gal(\overline{K}/K)$.
In the global case, we fix a place $v$ of $K$ lying over $p$, 
and a decomposition group $D_v \subset G_K$ for~$v$.

We now put ourselves in the following (somewhat elaborate) situation,
which we call a {\em tamely ramified semistable context}, or a
{\em tame semistable context} for short.

We suppose that $X_0$ and $X_1$ are 
smooth projective varieties over $K$, that $G$ is a finite group
which acts on $X_1$, and that $\pi: X_1 \to X_0$ is a finite \'etale 
morphism which intertwines the given $G$-action on  $X_1$
with the trivial $G$-action on $X_0$, making $X_1$ an \'etale $G$-torsor 
over $X_0$.   

We suppose further that $X_0$ admits a semistable projective model $\cX_0$ over
$\mathcal O_K$ (in the local case) or 
over  $\mathcal O_{K,(v)}$
(in the global case).  We also suppose that there is a finite Galois extension $L$ of~$K$,
and (in the global case) a prime $w$ of $L$ lying over $v$, such
that $(X_1)_{/L}$ admits a semistable projective model $\mathcal X_1$ over
$\mathcal O_L$ (in the local case) or over $\mathcal O_{L,(w)}$
(in the global case) to which the $G$-action extends, such that $\pi$ extends
to a morphism  $\mathcal X_1
\to (\mathcal X_0)_{/\mathcal O_L}$ which intertwines the $G$-action
on its source with the trivial $G$-action on its target, and such that
the action of the (opposite group of) the inertia group $I(L/K)^{\op}$ (respectively  $I(L_w/K_v)^{\op}$ in the global case) on
$(X_1)_{/L}$ extends to an action on $\mathcal X_1$.\footnote{Note that the tameness condition
that we are going to require below ensures that $L/K$ is in fact tamely ramified,
and hence that $I(L/K)$ is abelian.  Thus passing to the opposite group is not
actually necessary here when passing from the action on rings
to the action on their $\Spec$s, but we will keep the superscript $\op$ in the notation 
for the sake of conceptual clarity.}

Finally (and most importantly) we assume that the composite morphism
\numequation
\label{eqn:tamely ramified composite}
\mathcal X_1 \to (\mathcal X_0)_{/\mathcal O_L} \to \mathcal X_0
\end{equation}
(the first being the extension of $\pi$, and the second being 
the natural map) is tamely ramified along the special fibre
$(\cX_0)_s$, in the sense of \cite[Def.~2.2.2]{Groth-Murre}.

In fact, in our applications we will consider the case that $\cX_0$ is
furthermore strictly semistable, in which case we will say that we are
in a \emph{tame strictly semistable context}.

\begin{remark}
\label{rem:rigid}
{\em The notion of a tame semistable context is somewhat rigid, 
as we will see in the following lemma,
and would perhaps not be of much interest if it did not occur naturally
in the Shimura variety context (as we will see
Subsection~\ref{subsec:semistable reduction}).
As one example of this rigidity, note that if $G = 1$, i.e.\ if $X_0$ and $X_1$
coincide,
then the only way to achieve a tame semistable context is if $L/K$ is
unramified,
or if $\cX_0$ is smooth over $\cO_K$.
(Indeed, since a tamely ramified morphism is finite, 
and since the base-change $\cX_{0/\cO_L}$ over the semistable $\cO_K$-scheme
$\cX_0$ is normal, we see that if $X_0$ and $X_1$ coincide then the
morphism $\cX_1 \to \cX_{0/\cO_L}$ is necessarily an isomorphism.
This implies that the semistable
$\cO_K$-scheme $\cX_0$ has a semi-stable base-change over $\cO_L$,
which, as we noted in Remark~\ref{rem:semistable},
is possible only if $L/K$ is unramified or $\cX_0$ is smooth
over $\cO_K$.  Another point of view on this case is as follows:
if $\cX_0$
is semistable but not smooth, then in order to construct a semistable
model $\cX_1$ of  $\cX_{0/\cO_L}$,
we must perform some non-trivial blow-ups, and the resulting morphism
$\cX_1 \to \cX_0$ is not finite, and in particular not tamely ramified.)
}
\end{remark}

The following lemma gives a more concrete interpretation of the 
stipulation that~(\ref{eqn:tamely ramified composite})  be tamely ramified 
along $(\cX_0)_s$.

\begin{lemma}
\label{lem:tame}
In the above setting,
the morphism~{\em (\ref{eqn:tamely ramified composite})} is tamely
ramified along $(\cX_0)_s$
if and only if the following two conditions hold:
\begin{enumerate}
\item
$L$ {\em (}respectively
$L_w$ in the global case{\em )} is tamely ramified over $K$ {\em (}respectively $K_v$
in the global case{\em )},  of ramification degree $e$, say;
\item
For each closed geometric point $\overline{x}_1$ of the special fibre
$(\mathcal X_1)_s$, with image $\overline{x}_0$ in $(\mathcal X_0)_s$,
and for some choice of isomorphism
\numequation
\label{eqn:semistable model}
\widehat{\cO_{\overline{x}_0,\cX_0}} \cong \cshO{K}[[x_1,\ldots,x_n]]/(x_1\cdots x_m
- \unif_K),
\end{equation}
where $\unif_K$ is a uniformiser of $\cshO{K}$ and $1 \leq m \leq n$,
there is a corresponding isomorphism
$$
\widehat{\cO_{\overline{x}_1,\cX_1}} \cong \cshO{L}[[y_1,\ldots,y_n]]/(y_1\cdots y_m
- \unif_L),$$
where $\unif_L$ is a uniformiser of $\cshO{L}$,
such that the induced morphism
$$\widehat{(\cX_1)_{\overline{x}_1}} \to
\widehat{(\cX_0)_{\overline{x}_0}}$$
is defined by the formula $x_j = y_j^e$ for $1 \leq j \leq m,$
and $x_j = y_j$ for $m < j \leq n$.
\end{enumerate}
Furthermore, if these equivalent conditions hold, then condition~{\em (2)} holds
for {\em every} choice of isomorphism~{\em (\ref{eqn:semistable model})}.
\end{lemma}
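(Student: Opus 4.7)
The statement is local at each closed geometric point of $(\cX_1)_s$, so I would pass to completed local rings at $\overline{x}_1$ and its image $\overline{x}_0$. Semistability of $\cX_0$ gives the explicit form (\ref{eqn:semistable model}) for $\widehat{\cO_{\overline{x}_0,\cX_0}}$, and semistability of $\cX_1$ gives an analogous isomorphism $\widehat{\cO_{\overline{x}_1,\cX_1}} \cong \cshO{L}[[y_1,\ldots,y_{n'}]]/(y_1 \cdots y_{m'} - \unif_L)$ for some $m' \leq n'$. The equality $n' = n$ follows from equality of relative dimensions of the generic fibres (a consequence of the \'etaleness of $\pi$), and $m' = m$ will emerge a posteriori from the combinatorial analysis below.

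The heart of the forward implication is to unpack \cite[Def.~2.2.2]{Groth-Murre}: tameness of the morphism at $\overline{x}_1$ means that, relative to a regular system of parameters on $\widehat{\cO_{\overline{x}_1,\cX_1}}$ adapted to the NCD $(\cX_1)_s$---which one may take to be $y_1,\ldots,y_n$---each local equation $x_i$, $1 \le i \le m$, of a branch of $(\cX_0)_s$ pulls back to a unit times a monomial $\prod_{j=1}^n y_j^{a_{ij}}$ whose nonzero exponents are all prime to~$p$. Multiplying over $i$ and invoking $x_1 \cdots x_m = \unif_K$ in the source ring and $y_1 \cdots y_{m'} = \unif_L$ in the target ring, one deduces (by unique factorization in the regular local ring $\cshO{L}[[y_1,\ldots,y_n]]$) that $\unif_K = u \cdot \unif_L^e$ in $\cshO{L}$ for some unit~$u$ and positive integer~$e$, and that the exponent matrix $(a_{ij})$ has support contained in $\{1,\ldots,m'\}$ with each column sum equal to~$e$. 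A short dimensional argument---each branch $\{y_j = 0\}$ of $(\cX_1)_s$ dominates a unique branch of $(\cX_0)_s$ under the local morphism, since otherwise it would collapse a divisor onto a codimension-two stratum---forces each column of $(a_{ij})$ to have a unique nonzero entry equal to~$e$, and the symmetric statement on rows then gives $m' = m$ and a bijection realising the form $x_{\sigma^{-1}(j)} \mapsto \text{unit} \cdot y_j^e$. Tameness forces $p \nmid e$, which together with $\unif_K = u \cdot \unif_L^e$ is precisely the condition that $L/K$ (respectively~$L_w/K_v$) be tamely ramified of ramification index~$e$, proving (1); and (2) follows after reindexing. The converse is immediate: if (1) and (2) hold, the local form $x_j = y_j^e$ with $p \nmid e$ is literally the local condition of \cite[Def.~2.2.2]{Groth-Murre}.

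For the final assertion that (2) holds for \emph{every} choice of isomorphism (\ref{eqn:semistable model}), I would observe that any two such isomorphisms differ by a continuous $\cshO{K}$-algebra automorphism $\varphi$ of $\cshO{K}[[x_1,\ldots,x_n]]/(x_1 \cdots x_m - \unif_K)$. Such a $\varphi$ must permute the minimal prime ideals $(x_1),\ldots,(x_m)$ (as these are precisely the branches of $(\cX_0)_s$ through $\overline{x}_0$) up to unit rescalings, and can act arbitrarily on $x_{m+1},\ldots,x_n$ subject to the relation $x_1 \cdots x_m = \unif_K$. Extracting $e$-th roots of the relevant unit rescalings---which is possible because $e$ is prime to~$p$ and $\cshO{L}$ is strictly henselian, so Hensel's lemma applies to polynomials of the form $Z^e - u$---one builds a matching continuous $\cshO{L}$-algebra automorphism on the $\cX_1$-side which realises~(2) with respect to the modified target isomorphism. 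I expect this coherent bookkeeping of permutations and root extractions to be the main (though essentially routine) obstacle in the argument.
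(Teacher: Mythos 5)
Your approach is genuinely different from the paper's: where the paper invokes Abhyankar's Lemma to put the cover in the form of a subgroup-quotient of a Kummer cover (and then uses purity of the branch locus plus the reducedness of $(\cX_1)_s$ to pin down the structure), you instead write each $x_i$ as a unit times a monomial in the $y_j$'s and try to argue combinatorially from the exponent matrix $(a_{ij})$. The monomial form itself is fine (it follows from \'etaleness away from $(\cX_0)_s$ plus unique factorization in the regular local ring $\widehat{\cO_{\overline{x}_1,\cX_1}}$, and tameness then gives $p \nmid a_{ij}$), and the dimensional argument that each \emph{column} of $(a_{ij})$ has at most one nonzero entry is valid (a finite morphism cannot collapse a codimension-one branch to a codimension-two stratum). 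Combined with the column sums being $e$ (from $x_1\cdots x_m = \unif_K$, $y_1\cdots y_{m'} = \unif_L$, and $\unif_K = u\unif_L^e$), this gives a \emph{surjection} $\sigma:\{1,\ldots,m'\}\to\{1,\ldots,m\}$, hence $m'\geq m$.

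However, there is a genuine gap at exactly the point you gloss over with ``the symmetric statement on rows then gives $m'=m$''. There is no such symmetric statement: the dimensional argument shows the image of a branch is a single branch, but says nothing about whether the \emph{preimage} of a branch of $(\cX_0)_s$ is a single branch of $(\cX_1)_s$. A finite tamely ramified cover can perfectly well have several branches sitting over a single branch of the base (this happens already for finite \'etale covers). Ruling this out here, so that $\sigma$ is also injective, requires a real input: in the paper it is a consequence of Abhyankar's Lemma (which produces the cover as the quotient of the Kummer cover $T_j^{e_j}=x_j$, in which each $\{x_j=0\}$ visibly pulls back to a single branch $\{T_j=0\}$), sharpened by purity of the branch locus to eliminate the ambiguity in the subgroup $H$, and by the reducedness of $(\cX_1)_s$ to force the exponents $d_j$ to all equal $e$. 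None of these ingredients appears in your proposal, and I do not see how to derive $m'=m$ from the combinatorics of the exponent matrix alone. Until this step is supplied, the ``only if'' direction is incomplete. (The ``if'' direction and the ``furthermore'' clause are unproblematic, though the latter is left as acknowledged bookkeeping; the paper in fact gives even less detail there.)
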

\begin{proof}
We first note that if we are in the global case, than relabelling
$K_v$ as $K$ and $L_w$ as $L$, we may reduce ourselves to 
proving the lemma in the local case.  Thus we assume that we in the local case
from now on.

If conditions~(1) and (2) (for some choice of isomorphism~(\ref{eqn:semistable model}))
hold, then the morphism~(\ref{eqn:tamely ramified composite}) is certainly
tamely ramified along $(\cX_0)_s$.  (This amounts to the claim
that we can verify tame ramification by passing to formal completions of closed
geometric points, which is indeed the case, as follows from \cite[Cor.~4.1.5]{Groth-Murre}.)

Suppose conversely that~(\ref{eqn:tamely ramified composite}) 
is tamely ramified along $(\cX_0)_s$.
Since this morphism factors through the natural morphism $(\cX_0)_{/\cO_L} \to \cX_0,$
it follows from \cite[Lem.~2.2.5]{Groth-Murre} that this latter morphism
is tamely ramified, and hence (e.g.~by
\cite[Prop.~2.2.9]{Groth-Murre}, although our particular situation
is much simpler than the general case of faithfully flat descent for tamely ramified
covers considered in that proposition) that $\Spec \cO_L \to
\Spec \cO_K$ is tamely ramified, i.e.\ that $L$ is tamely ramified over $K$,
of some ramification degree~$e$.  Thus condition~(1) holds.

Now choose a closed geometric point $\overline{x}_1$ of $(\cX_1)_s$ lying
over the closed geometric point $\overline{x}_0$ of $(\cX_0)_s$,
and fix an isomorphism of the form~(\ref{eqn:semistable model}).
Since $\cX_1 \to \cX_0$ is tamely ramified along the divisor $\unif_K = 0$
of $\cX_0$,
Abhyankar's Lemma \cite[Exp.\ XIII Cor.\ 5.6]{MR0354651}  (see also
\cite[Thm.~2.3.2]{Groth-Murre} for a concise statement)
implies that we may find regular elements $\{a_j\}_{j = 1,\ldots,k}$ of
$\cshO{K}[[x_1,\ldots,x_n]]/(x_1\cdots x_m - \unif_K)$
so that $a_1\cdots a_k$ generates the ideal $(\unif_K)$ of
$\cshO{K}[[x_1,\ldots,x_n]]/(x_1\cdots x_m - \unif_K)$,
exponents $e_1,\ldots,e_k$ all coprime to $p$,
and a subgroup $H \subset \bbmu_{e_1} \times \cdots \times \bbmu_{e_k},$
such that the
$\cshO{K}[[x_1,\ldots,x_n]]/(x_1\cdots x_m - \unif_K)$-algebra
$\widehat{\cO_{\overline{x}_1,\cX_1}}$ is isomorphic to
$$\bigl(\cshO{K}[[x_1,\ldots,x_n]][T_1,\ldots,T_k]/(x_1\cdots x_m - \unif_K,
T_1^{e_1}-a_1,\ldots, T_k^{e_k}-a_k)\bigr)^H.$$
(Here $\bbmu_{e_1} \times \cdots \times \bbmu_{e_k},$ and hence $H$, acts on
$\cshO{K}[[x_1,\ldots,x_n]][T_1,\ldots,T_k]/(x_1\cdots x_m - \unif_K,
T_1^{e_1}-a_1,\ldots, T_k^{e_k}-a_k)$
in the obvious
manner: namely, an element $(\zeta_1,\ldots,\zeta_k)$ acts on $T_j$
via multiplication by $\zeta_j$.)

Since each $e_j$ is prime to $p$ and $\cshO{K}[[x_1,\ldots,x_n]]/(x_1\cdots x_m -\unif_K)$
is strictly Henselian, any unit in this ring has an $e_j$th root,
and thus we are free to multiply any of the $a_j$ by a unit.  Consequently,
we may assume that in fact $a_1\cdots a_k = \unif_K = x_1\cdots x_m,$
and hence (again taking advantage of our freedom to modify the $a_j$ by units)
that $\{1,\ldots,m\}$ is partitioned into $k$ sets $J_1,\ldots, J_k$,
such that $a_j = \prod_{i \in J_j} x_i$.  Now if we extract the $e_j$th roots
of each $x_i$ for $i \in J_j$, the resulting extension of
$\cshO{K}[[x_1,\ldots,x_n]]/(x_1\cdots x_m - \unif_K)$
contains
$$\cshO{K}[[x_1,\ldots,x_n]][T_1,\ldots,T_k]/(x_1\cdots x_m - \unif_K,
T_1^{e_1}-a_1,\ldots, T_k^{e_k}-a_k);$$
thus it is no loss of generality
to assume that $k = m$ and that $a_j = x_j$,
and so we conclude that
$\widehat{\cO_{\overline{x}_1,\cX_1}}$ is isomorphic,
as an
$\cshO{K}[[x_1,\ldots,x_n]]/(x_1\cdots x_m - \unif_K)$-algebra,
to
$$\bigl(\cshO{K}[[x_1,\ldots,x_n]][T_1,\ldots,T_m]/(x_1\cdots x_m - \unif_K,
T_1^{e_1}-x_1,\ldots, T_m^{e_m}-x_m)\bigr)^H,$$
for some subgroup $H \subset \bbmu_{e_1} \times \cdots \times \bbmu_{e_m}.$

Let $I_j$ denote the subgroup
$1 \times \cdots \times \bbmu_{e_j} \times \cdots \times 1$
of $\bbmu_{e_1} \times \cdots \times \bbmu_{e_m}$; this is the inertia group
of the divisor $(x_j)$ with respect to the cover
\begin{multline*}
\Spec
\cshO{K}[[x_1,\ldots,x_n]][T_1,\ldots,T_m]/(x_1\cdots x_m - \unif_K,
T_1^{e_1}-x_1,\ldots, T_m^{e_m}-x_m)
\\
\to
\Spec
\cshO{K}[[x_1,\ldots,x_n]]/(x_1\cdots x_m - \unif_K).
\end{multline*}

If we write $H_j = H\cap I_j$, then $H' := H_1\times \cdots\times H_m$ is a 
subgroup of $H$, and the cover
\begin{multline*}
\Spec \bigl(\cshO{K}[[x_1,\ldots,x_n]][T_1,\ldots,T_m]/(x_1\cdots x_m - \unif_K,
T_1^{e_1}-x_1,\ldots, T_m^{e_m}-x_m)\bigr)^{H'}
\\
\to 
\Spec
\bigl(\cshO{K}[[x_1,\ldots,x_n]][T_1,\ldots,T_m]/(x_1\cdots x_m - \unif_K,
T_1^{e_1}-x_1,\ldots, T_m^{e_m}-x_m)\bigr)^{H}
\end{multline*}
is unramified in codimension one.  Since $\cX_1$ is regular, being 
semistable over $\cO_L$, so is the target of this map (since we recall that
this target is isomorphic to $\widehat{(\cX_1)_{\overline{x}_1}}$).
The purity of the branch locus then implies that this cover is \'etale,
and hence is an isomorphism (since its target is strictly Henselian).
Consequently $H = H'$. 

If we write
$$
H_j = 
1 \times \cdots \times \bbmu_{e_j'} \times \cdots \times 1
\subset
1 \times \cdots \times \bbmu_{e_j} \times \cdots \times 1
= I_j,
$$
and set $d_j = e_j/e_j'$ and $S_j = T_j^{e_j'}$,
then we conclude that
\begin{multline*}
\widehat{\cO_{\overline{x}_1,\cX_1}} \\ \,\, \cong 
 \bigl(\cshO{K}[[x_1,\ldots,x_n]][T_1,\ldots,T_m]/(x_1\cdots x_m - \unif_K,
T_1^{e_1}-x_1,\ldots, T_m^{e_m}-x_m)\bigr)^{H'}
\\
\cong
\cshO{K}[[x_1,\ldots,x_n]][S_1,\ldots,S_m]/(x_1\cdots x_m - \unif_K,
S_1^{d_1}-x_1,\ldots, S_m^{d_m}-x_m).
\end{multline*}
Now $\mathcal X_1$ is an $\cO_L$-scheme with reduced special fibre (again because
it is semistable over $\cO_L$).  Since 
$\widehat{\cO_{\overline{x}_1,\cX_1}}$
is strictly Henselian, it contains
$\cshO{L}$, and we may choose a uniformiser $\unif_L$ of this ring such
that $\unif_L^e = \unif_K$.  Looking at the above description of 
$\widehat{\cO_{\overline{x}_1,\cX_1}}$, and taking into account that its reduction
modulo $\unif_L$ must be reduced, we see that this special fibre must
be the zero locus of the element $S_1\cdots S_m,$ hence that $S_1\cdots S_m = u \unif_L$
for some unit $u$,
and thus that $(S_1\cdots S_m)^e = u^e \unif_K$.  We conclude that $d_1 = \ldots = d_m = e$
and that $u^e = 1$, and hence, replacing $\unif_L$ by $u\unif_L$,
we find that $S_1\cdots S_m  = \unif_L$.  This shows that~(2) holds (for our
given choice of isomorphism~(\ref{eqn:semistable model})).
\end{proof} 

\begin{remark}
{\em  Note that we could avoid the appeal to the general theory of tame
ramification (in particular, to Abhyankar's Lemma) by just directly
stipulating in our context that conditions~(1) and~(2) of Lemma~\ref{lem:tame} hold;
indeed, in the proof of Theorem~\ref{thm:arb cohom equivariant} below, we will
work directly with these conditions, and in our applications to Shimura varieties,
we will also see directly that these conditions hold.
Nevertheless, we have included Lemma~\ref{lem:tame} as an assurance to ourselves
(and perhaps to the reader) that these conditions are somewhat natural.
}
\end{remark}

\begin{lem}
  \label{lem:strict tame semistable implies X_1 strict}In a tame
  strictly semistable context as above, the $\cO_L$-scheme $\cX_1$ is
  also strictly semistable.
\end{lem}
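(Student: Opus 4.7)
By assumption, $\cX_1$ is already a semistable projective model over $\cO_L$ (respectively $\cO_{L,(w)}$), so the content of the lemma is to verify that the special fibre $(\cX_1)_s$ has \emph{strict} normal crossings, i.e., that at each closed geometric point $\overline{x}_1$ of $(\cX_1)_s$, the $m$ formal branches through $\overline{x}_1$ lie on $m$ distinct global irreducible components of $(\cX_1)_s$. By Lemma~\ref{lem:tame}, these formal branches are explicitly the divisors $V_j := (y_j = 0)$ for $j = 1,\ldots,m$ in the local model, and they are sent by the composite morphism $\pi' : \cX_1 \to \cX_0$ into the corresponding local branches $U_j := (x_j = 0)$ of $(\cX_0)_s$ at $\overline{x}_0 := \pi'(\overline{x}_1)$, via the formula $x_j = y_j^e$. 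The idea is to transport the strictness of $\cX_0$ to $\cX_1$ through $\pi'$.

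First I would verify that $\pi'$ is finite. The factor $(\cX_0)_{/\cO_L} \to \cX_0$ is finite as a base change along $\cO_K \to \cO_L$. The factor $\cX_1 \to (\cX_0)_{/\cO_L}$ is proper, quasi-finite on generic fibres (where it is the finite \'etale map extending $\pi$), and quasi-finite at every closed geometric point of the special fibre by the explicit finite formula $x_j = y_j^e$, $x_j = y_j$ on formal completions given by Lemma~\ref{lem:tame}; a proper and quasi-finite morphism is finite. Now suppose for contradiction that two distinct branches $V_j$ and $V_{j'}$ at $\overline{x}_1$, with $j \neq j'$ and $j,j' \le m$, belong to the same global irreducible component $F$ of $(\cX_1)_s$. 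By strict semistability of $\cX_0$, the local branches $U_j$ and $U_{j'}$ at $\overline{x}_0$ belong to distinct global irreducible components $E_j \neq E_{j'}$ of $(\cX_0)_s$.

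The image $\pi'(F)$ is a closed irreducible subset of $(\cX_0)_s$, and since $\pi'$ is finite it preserves dimension, so $\dim \pi'(F) = \dim F = n-1 = \dim (\cX_0)_s$; hence $\pi'(F)$ is itself an irreducible component of $(\cX_0)_s$. On the other hand, the formal germ of $\pi'(F)$ at $\overline{x}_0$ contains both $U_j$ (because $F \supseteq V_j$ and $\pi'(V_j) \subseteq U_j$) and $U_{j'}$. But by the strict-normal-crossings property of $(\cX_0)_s$, an irreducible component of $(\cX_0)_s$ that has $U_j$ as a formal branch at $\overline{x}_0$ must equal $E_j$, and similarly for $U_{j'}$; this forces $\pi'(F) = E_j = E_{j'}$, contradicting $E_j \neq E_{j'}$. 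I expect the main technical point requiring some care to be the verification of the global finiteness of $\pi'$; once that is in hand, the rest is essentially a dimension count combined with the strict-normal-crossings property of $\cX_0$ transported through the explicit local description furnished by Lemma~\ref{lem:tame}.
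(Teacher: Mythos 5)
Your proof is correct and proceeds along essentially the same lines as the paper: both use the explicit local model provided by Lemma~\ref{lem:tame} to transport the strict normal crossings property from $\cX_0$ to $\cX_1$ by contraposition. The only notable difference is that you phrase strictness in terms of formal branches lying on distinct global components (and consequently devote some effort to the finiteness of the composite morphism and a dimension count), whereas the paper's proof is more direct, showing that a non-regular component of $(\cX_1)_s$ would have a non-regular image component in $(\cX_0)_s$ simply by computing the formal completion of the local ring of that image.
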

\begin{proof}Since $\cX_1$ is semistable by assumption, it is enough
to show that the components of the special fibre $(\cX_1)_s$ are regular.
Suppose that $D$ is a non-regular component of $(\cX_1)_s$, and let $\overline{x}_1$
be a closed geometric point of $D$ whose local ring on $D$ is not regular. 
If we let $\overline{x}_0$ denote the image of $\overline{x}_1$ in $(\cX_0)_s$,
then Lemma~\ref{lem:tame}~(2) shows that we may find isomorphisms
$\widehat{\cO_{\overline{x}_1,(\cX_1)_s}}
\cong
\overline{\cO_K/\varpi_K}[[y_1,\ldots,y_n]]/(y_1\cdots y_m)$
and
$\widehat{\cO_{\overline{x}_0,(\cX_0)_s}}
\cong
\overline{\cO_K/\varpi_K}[[x_1,\ldots,x_n]]/(x_1\cdots x_m),$
with $1 \leq m \leq n,$
such that the morphism
$
\widehat{\cO_{\overline{x}_1,(\cX_1)_s}}
\to
\widehat{\cO_{\overline{x}_0,(\cX_0)_s}}
$
is  given by $x_j = y_j^e$ for $1 \leq j \leq m$ and $x_j = y_j$ for $m < j \leq n$.
Since by assumption $\overline{x}_1$ is not a regular point of $D$,
we find that necessarily $m \geq 2,$ and that (possibly after permuting indices)
there is an isomorphism
$\widehat{\cO_{\overline{x}_1,D}}
\cong
\overline{\cO_K/\varpi_K}[[y_1,\ldots,y_n]]/(y_1\cdots y_{m'}),$
where $2 \leq m' \leq m$.
If we let $D'$ denote the image of $D$ in $(\cX_0)_s$,
we conclude that
there is an isomorphism
$\widehat{\cO_{\overline{x}_0,D'}}
\cong
\overline{\cO_K/\varpi_K}[[x_1,\ldots,x_n]]/(x_1\cdots x_{m'}),$
and thus  that $D'$ is not regular.
Hence $\cX_0$ is not strictly semistable,
a contradiction.\end{proof}

We now suppose that we are in a tame semistable context, as described above, and
suppose for the moment that we are in the local case. Then we have an action of
$I(L/K)^\op\times G$ on the special fibre $(\cX_1)_s$. Let $D$ be an irreducible
component of the special fibre of $(\cX_0)_s$, and let $\tD$ denote its preimage
in $(\cX_1)_s$, so that $\tD$ is an $I(L/K)^{\op}\times G$-invariant union of
irreducible components of $(\cX_1)_s$.

\begin{lem}
\label{lem:inertia computation}
If $G$ is abelian,
then there is a homomorphism $\psi:I(L/K) \to G$ such that the action of $I(L/K)^{\op}$ on
$\tD$ is given by composing the action of $G$ with $\psi$;
i.e.\ if $i \in I(L/K)$, then the action of $i$ on $\tD$ coincides with the action of $\psi(i)$.
\end{lem}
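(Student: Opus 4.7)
The plan is to argue via Galois theory of the function field extension $K(\cX_1)/K(\cX_0) = L(X_1)/K(X_0)$. After passing to connected components if needed (and, if $L/K$ is not itself Galois, to its Galois closure—which can be arranged without altering the situation relevant to $I(L/K)$), this will be a Galois extension with Galois group $H := G \times \Gal(L/K)$: the $G$-factor arising from the torsor structure of $\pi$, and the $\Gal(L/K)$-factor from the base change. Since $G$ is abelian by hypothesis and $I(L/K) \subset \Gal(L/K)$ is abelian by tameness (Lemma~\ref{lem:tame}), the subgroup $G \times I(L/K) \subset H$ will be abelian, which is what lets us speak unambiguously of ``the'' inertia at the relevant codimension-one points.

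Next, I would let $\eta_D$ denote the generic point of $D$ in $\cX_0$---a codimension-one point, since $D$ is a component of the reduced special fibre of the regular scheme $\cX_0$, so that $\cO_{\eta_D,\cX_0}$ is a DVR with uniformizer $\unif_K$---and let $\eta_1,\ldots,\eta_r$ denote its preimages in $\cX_1$, which are the generic points of the components $\tD_j$ of $\tD$. Each $\cO_{\eta_j,\cX_1}$ is also a DVR, and the usual Galois theory of DVR extensions gives that $H$ acts transitively on $\{\eta_j\}$, with inertia subgroups $I_{\eta_j}\subset G \times I(L/K)$ all equal (by abelianness); call the common subgroup $I_0$. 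To identify $I_0$, I would use Lemma~\ref{lem:tame}~(2) to compute at a closed geometric point of $\tD_j$ that $v_{\eta_j}(\unif_K) = e$, so that the ramification is $e$ (and tame), whence $|I_0| = e$. Since $\pi$ is \'etale on the generic fibre, the subextension $L(X_1)/L(X_0)$ is everywhere unramified, so $I_0 \cap G = \{1\}$; the projection $I_0 \to \Gal(L/K)$ is therefore injective with image contained in $I(L/K)$, and since both groups have order $e$, it is an isomorphism onto $I(L/K)$. Hence $I_0$ is the graph of a homomorphism $\psi_0: I(L/K) \to G$, i.e.\ $I_0 = \{(\psi_0(i),i) : i \in I(L/K)\}$.

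To conclude, I will set $\psi(i) := \psi_0(i)^{-1}$, which is a homomorphism $I(L/K) \to G$ since $G$ is abelian. For any $i\in I(L/K)$ and any $j$, the element $(\psi(i)^{-1},i) = (\psi_0(i),i) \in I_0 = I_{\eta_j}$, so the two elements $(\psi(i),1)$ and $(1,i)$ of $H$ send $\eta_j$ to the same point $\eta_{j'}$ and induce the same map on residue fields $\kappa(\eta_{j'}) \to \kappa(\eta_j)$. Since $\kappa(\eta_j)$ is the function field of the integral scheme $\tD_j$ and a morphism of integral schemes is determined by its action on function fields, the two elements restrict to the same morphism $\tD_j \to \tD_{j'}$; assembling over $j$ shows that $\psi(i)\in G$ and $i\in I(L/K)^{\op}$ induce the same automorphism of $\tD$.

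The main obstacles will be bookkeeping issues rather than substantive ones: cleanly identifying the Galois group of the function field extension with $G\times \Gal(L/K)$ when $X_1$ is disconnected (one reduces to connected components, taking care that inertia behaves well under this reduction), and controlling sign/opposite-group conventions when translating between scheme-theoretic and field-theoretic actions (trivial here because everything in sight is abelian). All the substantive geometric input is concentrated in the ramification computation via Lemma~\ref{lem:tame} and the \'etaleness of $\pi$.
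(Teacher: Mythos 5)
Your proof takes a genuinely different route from the paper.  The paper restricts to the locus $\cU_0\subset(\cX_0)_{/\cO_L}$ over which $\cX_1$ is \'etale, passes to special fibres to get an \'etale $G$-torsor $\tD'\to D'$, and applies the abstract torsor Lemma~\ref{lem:action on torsors} (automorphisms of a $G$-torsor with abelian $G$ over a connected base are given by a single element of $G$, so the $I(L/K)^\op$-action factors through a homomorphism into $G$), extending to $\tD$ by density.  You instead pass to the Galois theory of the function field extension and compute inertia groups at the generic points $\eta_j$ of $\tD$, identifying the inertia $I_0$ as the graph of a homomorphism.  Both arguments ultimately encode the same local geometry (the local formulas of Lemma~\ref{lem:tame}(2)); the paper's torsor formalism is cleaner because it sidesteps the orbit bookkeeping entirely.

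Two steps in your write-up are not fully justified and are more than bookkeeping.  First, you claim $I_0\cap G=\{1\}$ because ``$\pi$ is \'etale on the generic fibre, so $L(X_1)/L(X_0)$ is everywhere unramified.''  This is a non-sequitur: \'etaleness of $\pi$ on the generic fibre gives nothing at the codimension-one points $\eta_j$ lying in the special fibre, and one cannot appeal to purity of the branch locus because $(\cX_0)_{/\cO_L}$ is \emph{not} regular along the intersections of the components.  The correct reason is that $(\cX_1)_s$ and $\bigl((\cX_0)_{/\cO_L}\bigr)_s$ are both reduced --- the former by semistability of $\cX_1$, the latter because $(\cX_0)_s$ is reduced and $k_L/k_K$ is separable (tameness) --- so $\unif_L$ is a uniformiser on both sides, giving $e(\eta_j/\eta'_D)=1$, together with separability of the residue extension (again tameness).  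Second, your claim that the $I_{\eta_j}$ are ``all equal (by abelianness)'' needs care: they are conjugate under the \emph{full} Galois group $H$, and while $G\times I(L/K)$ is abelian, it need not be centralized by $H$ (the quotient $\Gal(L/K)/I(L/K)$ can act nontrivially on $I(L/K)$ when $\mu_e\not\subset k_K$).  What you actually have is that $G$ alone (which is central in $H$) acts transitively on the $\eta_j$ lying over a single preimage $\eta'_D\in(\cX_0)_{/\cO_L}$ of $\eta_D$, which forces those inertia groups to coincide; equality across different $\eta'_D$ (i.e.\ when $D$ is not geometrically irreducible over $k_K$) requires a further argument.  (The paper's written proof implicitly assumes $D\cap(\cU_0)_s$ is irreducible, so this latter point is shared.)  The final step --- that two automorphisms of the separated integral scheme $\tD_j$ agreeing on the function field coincide --- is correct, though ``a morphism of integral schemes is determined by its action on function fields'' is not literally true; what is true, and suffices, is that morphisms to a separated scheme agreeing on a schematically dense subscheme agree.
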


We first prove a general lemma.

\begin{lem}
\label{lem:action on torsors}
Let $S$ be a connected Noetherian scheme, 
let $G$ and $I$ be finite groups, and let $f:T \to S$ be a finite \'etale morphism
with the property that $ I^{\op} \times G$ acts on $T$ over $S$ in such
a way that $T$ becomes a $G$-torsor over $S$.
If $G$ is abelian, then there exists a morphism $\psi: I \to G$
such that the action of $I$ on $T$ is given by composing the action
of $G$ on $T$ with the morphism $\psi$.
\end{lem}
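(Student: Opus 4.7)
The plan is to fix a geometric point $\overline{s}$ of $S$ and a lift $\overline{t}$ of $\overline{s}$ to $T$, and to \emph{define} $\psi(i)$ pointwise by $i \cdot \overline{t} = \psi(i) \cdot \overline{t}$; this makes sense because the fiber $T_{\overline{s}}$ is a principal homogeneous $G$-set, and because $I$ preserves this fiber (it acts over $S$) and commutes with $G$. That $\psi$ is a group homomorphism (in view of the opposite convention on the $I$-action and the fact that $G$ is abelian) is a direct computation: if $i,j\in I$, then
\[
(ij)\cdot \overline{t} \,=\, j\cdot(i\cdot \overline{t})\,=\, j\cdot(\psi(i)\,\overline{t})\,=\, \psi(i)\,(j\cdot\overline{t})\,=\, \psi(i)\psi(j)\,\overline{t},
\]
so $\psi(ij)=\psi(i)\psi(j)=\psi(j)\psi(i)$.

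The heart of the argument is to upgrade this pointwise equality at $\overline{t}$ to an equality of $S$-automorphisms of all of $T$. For each $i\in I$, consider $\phi_i := \psi(i)^{-1}\circ i$, which is an $S$-automorphism of $T$ commuting with the $G$-action and fixing $\overline{t}$. Since $\phi_i$ commutes with $G$, it fixes every point of the full orbit $G\cdot \overline{t} = T_{\overline{s}}$. I then look at the equalizer $E \subset T$ of $\phi_i$ and $\id_T$, realized as $(\id,\phi_i)^{-1}(\Delta_T)$ for the diagonal $\Delta_T\subset T\times_S T$. Because $T\to S$ is finite \'etale, it is simultaneously separated (so $\Delta_T$ is closed) and unramified (so $\Delta_T$ is open), hence $\Delta_T$ is clopen in $T\times_S T$; consequently $E$ is clopen in $T$.

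To conclude $E=T$, I use that $T\to S$ is finite \'etale over the connected base $S$, so every connected component of $T$ surjects onto $S$, and therefore meets the fiber $T_{\overline{s}}$. Since $T_{\overline{s}}\subset E$ and $E$ is clopen, $E$ meets every connected component of $T$ and is therefore equal to $T$. This gives $\phi_i = \id_T$, i.e.\ $i$ acts as $\psi(i)\in G$ on all of $T$, as required. The main obstacle is really the clopen-equalizer step: it is precisely the finite \'etale hypothesis that lets us transfer a pointwise coincidence on a single fiber to a global equality of automorphisms, and without both the separatedness and the unramifiedness of $T/S$ the argument breaks down.
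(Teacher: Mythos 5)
Your proof is correct, and it follows the same overall scaffolding as the paper's --- define $\psi$ pointwise on a geometric fibre, check it is a homomorphism, and propagate the pointwise coincidence from the fibre to all of $T$ --- but it handles the propagation step by a genuinely different argument. The paper passes through the equivalence between finite \'etale covers of $S$ and finite continuous $\pi_1(S,\overline{s})$-sets: once translated into $\pi_1$-sets, the base-point independence of $\psi$ together with the abelianness of $G$ shows that $i$ and $\psi(i)$ coincide on the whole fibre $P$, and the faithfulness of the fibre functor then promotes this to an equality of automorphisms of $T$. You instead prove that rigidity directly: the equalizer of $\phi_i = \psi(i)^{-1}\circ i$ and $\id_T$ is clopen because a finite \'etale morphism is both separated (closed diagonal) and unramified (open diagonal), and it meets every connected component because each component of a finite \'etale cover of the connected base $S$ surjects onto $S$, hence meets the fibre over $s$, which you have shown lies in the equalizer. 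Your version is more elementary and self-contained, making transparent exactly what the finite \'etale hypothesis buys, at the cost of re-proving a piece of rigidity that the paper obtains wholesale from the \'etale fundamental group machinery; the paper's version is shorter given that machinery, and it also records the explicit base-point dependence $\psi_{gp} = g\psi_p g^{-1}$. You use the abelian hypothesis at the corresponding place --- it is what makes $\phi_i$ commute with the $G$-action and hence fix the whole $G$-orbit of $\overline{t}$, not just $\overline{t}$.
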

\begin{proof}
  For clarity, we will not impose the assumption that $G$ is abelian until it is
  required.

If we fix a geometric point $\overline{s}$ of $S$,
then the theory of the \'etale fundamental group \cite[Exp.\ V Thm.\ 4.1]{MR0354651}
shows that passing to the fibre over $\overline{s}$ gives an equivalence of
categories between the category of finite \'etale covers of $S$
and the category of (discrete) finite sets with a continuous action of
$\pi_1(S,\overline{s}).$ In this way, $T$ is classified 
by an object $P$ of this latter category equipped with an action of $I^{\op} 
\times G$, with respect to which the $G$-action makes $P$ 
a principal homogeneous $G$-set.  

If we fix a base-point $p \in P$, then we may identify $P$ with $G$, thought of as
a principal homogeneous $G$-set via left multiplication.  As the automorphisms of $G$ as
a principal homogeneous $G$-set are naturally identified with $G^{\op}$ acting
by right multiplication, we obtain a homomorphism $\psi_p: I^{\op} \to G^{\op}$,
or equivalently a homomorphism $\psi_p: I \to G$, describing the
action of $I^{\op}$ on $P$.  If we replace $p$ by $g \cdot p$ (for some
 $g \in G$), then one finds that $\psi_{gp} = g \psi_p g^{-1}.$
Thus, if we now assume furthermore that $G$ is abelian, then $\psi_{p} = \psi_{gp}$,
and so it is reasonable in this case to write simply $\psi$ for this
homomorphism, which is well-defined independently of the choice of base-point
for $P$.   Furthermore, when $G$ is abelian, left and right multiplication
by an element $g \in G$ coincide, and so the action of $I^{\op}$ on
$P$ is given by the formula $i\cdot p = \psi(i)\cdot p$ for all $p \in P$.
Since the automorphisms of $T$ over $S$ induced by $i$ and $\psi(i)$
coincide on $P$, they in fact coincide on all of $T$.  
\end{proof}

\noindent
{\em Proof of Lemma~\ref{lem:inertia computation}.}
The morphism $\cX_1 \to (\cX_0)_{/\cO_L}$ is \'etale on generic fibres,
and the explicit local formulas for this morphism provided by
Lemma~\ref{lem:tame} show that it is in fact \'etale over
an open subset $\cU_0$ of $(\cX_0)_{/\cO_L}$ whose intersection with the
special fibre $\bigl((\cX_0)_{/\cO_L}\bigr)_s$ is
Zariski dense.
Replacing $\cU_0$ with the intersection of all of its $I(L/K)^{\op}$-translates,
we may furthermore assume that $\cU_0$ is invariant under the action
of $I(L/K)^{\op}$ on $(\cX_0)_{/\cO_L}$.

If we let $\cU_1$ denote the preimage of $\cU_0$ in $\cX_1$,
then $\cU_1$ is invariant under the $I(L/K)\times G$-action on $\cX_1$,
and the morphism $\cU_1 \to \cU_0$ is a finite \'etale cover, 
for which the corresponding map $U_1 \to U_0$ on generic fibres
realizes $U_1$ as a $G$-torsor over $U_0$.   It follows that
the $G$-action on $\cU_1$ realizes
$\cU_1$ as an \'etale $G$-torsor over $\cU_0$,
and hence, passing to special fibres, that $(\cU_1)_s$ is an
\'etale $G$-torsor over $(\cU_0)_s$.

Now the induced $I(L/K)^{\op}$-action on $(\cU_0)_s$ is trivial,
and so $I(L/K)^{\op}$ acts on $(\cU_1)_s$ as a group of automorphisms
of the $G$-torsor $(\cU_1)_s$ over $(\cU_0)_s$.
If $D' := D \cap (\cU_0)_s$, then $D'$ is an irreducible component of $(\cU_0)_s$,
and $\tD' := \tD \cap (\cU_1)_s$ is the restriction of $(\cU_1)_s$ to $D'$.
Thus $\tD' \to \tD$ is again an \'etale $G$-torsor, with an
action of $I(L/K)^{\op}$ via automorphisms.
Lemma~\ref{lem:action on torsors} then shows that
there is a homomorphism $\psi: I(L/K)^{\op} \to G^{\op}$, or equivalently a homomorphism $\psi: I(L/K) \to G$, such that
the action of $I(L/K)^{\op}$ on the points of $\tD'$ 
is given by composing the action of $G$ with the homomorphism $\psi$. 
Since $\tD$ is equal to the Zariski closure of $\tD'$ in $(\cX_1)_s$,
the claim of the lemma follows.
\qed

\begin{lemma}
\label{lem:intersection} Suppose that we are in a tame strictly
semistable context.
If $g \in I(L/K) \times G$ and $D$ is a component of 
$(\cX_1)_s$, then $D$ and $g D$ either coincide or are disjoint.
\end{lemma}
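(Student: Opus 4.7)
The plan is a local argument at a common geometric point. Suppose $D \cap gD$ is non-empty; I would then fix a closed geometric point $\overline{x}_1 \in D \cap gD$ and set $\overline{x}_0 = \pi(\overline{x}_1)$, with the goal of showing that $D$ and $gD$ correspond to the same minimal prime of $\widehat{\cO_{\overline{x}_1,(\cX_1)_s}}$.

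First I would observe that $g = (i,h) \in I(L/K) \times G$ acts trivially on $\cX_0$. The $G$-action is trivial by hypothesis, and although the $I(L/K)^{\op}$-action on $(\cX_0)_{/\cO_L}$ (via the Galois action on $\cO_L$) is non-trivial, the projection $(\cX_0)_{/\cO_L} \to \cX_0$ intertwines it with the trivial action on $\cX_0$. Since the composite $\cX_1 \to (\cX_0)_{/\cO_L} \to \cX_0$ is therefore $g$-equivariant with trivial target action, we have $\pi(gD) = \pi(D)$ as global components of $(\cX_0)_s$.

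Next, I would apply Lemma~\ref{lem:tame} to fix compatible isomorphisms
$$\widehat{\cO_{\overline{x}_0,\cX_0}} \cong \cshO{K}[[x_1,\ldots,x_n]]/(x_1\cdots x_m - \unif_K), \quad \widehat{\cO_{\overline{x}_1,\cX_1}} \cong \cshO{L}[[y_1,\ldots,y_n]]/(y_1\cdots y_m - \unif_L),$$
under which $\pi$ is given by $x_j = y_j^e$ for $1 \leq j \leq m$. By Lemma~\ref{lem:strict tame semistable implies X_1 strict}, $\cX_1$ is strictly semistable, so each irreducible component of $(\cX_1)_s$ is regular, as is each irreducible component of $(\cX_0)_s$. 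Since regular local rings have regular, hence integral, completions, every global component through $\overline{x}_1$ (resp.\ $\overline{x}_0$) is analytically irreducible, and the global components of $(\cX_1)_s$ passing through $\overline{x}_1$ (resp.\ $(\cX_0)_s$ passing through $\overline{x}_0$) are in bijection with the minimal primes $(y_j)$ (resp.\ $(x_j)$), $1 \leq j \leq m$, of the completed local ring of the special fibre.

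Finally, the formula $x_j = y_j^e$ shows that the local component cut out by $y_j$ in $(\cX_1)_s$ maps to the local component cut out by $x_j$ in $(\cX_0)_s$, so the two bijections above are compatible with $\pi$. Since $\pi(D) = \pi(gD)$ corresponds to a single local component $(x_j)$ at $\overline{x}_0$, both $D$ and $gD$ must correspond to the same local component $(y_j)$ at $\overline{x}_1$, forcing $D = gD$ as global components. I do not foresee a significant obstacle: the key input is the analytic irreducibility of the components of a strictly semistable model, which lets us pass freely between global components and minimal primes of the completed local ring.
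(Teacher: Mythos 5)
Your proof is correct and follows the same route as the paper's: reduce to showing two distinct components of $(\cX_1)_s$ meeting at a point have distinct images in $(\cX_0)_s$, using the explicit local description from Lemma~\ref{lem:tame}. You spell out a step the paper leaves implicit---that strict semistability gives analytic irreducibility of the components, so global components through a closed geometric point biject with minimal primes $(y_j)$ of the completed local ring---which is a welcome clarification but not a different argument.
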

\begin{proof}
The images of $D$ and $g D$ in $(\cX_0)_s$ coincide,
and it follows from Lemma~\ref{lem:tame}
that two distinct components of $(\cX_1)_s$ that have non-empty intersection
must have distinct images in $(\cX_0)_s$.
\end{proof}

If we now suppose that we are in the global case, then the discussion applies
with $L/K$ everywhere replaced by $L_w/K_v$, and in particular for
each component $D$ we may 
define a character $\psi: I(L_w/K_v) \to G$ describing the action of $I(L_w/K_v)$
on $D$.

Our next result describes how our tame  semistable context behaves upon passage
to a semistable hypersurface section of $\cX_0$. 
In its statement we assume for simplicity that we are in the local case.  

\begin{prop}
\label{prop:tame hyperplane}Suppose that we are in the tamely ramified semistable
  context described above, and let $\cY_0$ be a regular hypersurface
  section of $\cX_0$ such that the union of $\cY_0$ and $(\cX_0)_s$
  forms a divisor with normal crossings on $\cX_0$. Let $Y_0$ denote
  the generic fibre of $\cY_0$, let $Y_1$ denote
  the preimage of $Y_0$ under the morphism $\pi: X_1 \to X_0,$ and let
  $\cY_1$ be the preimage of $\cY_0$ under~{\em (\ref{eqn:tamely
      ramified composite})} {\em (}so that $(Y_1)_{/L}$ is the generic
  fibre of the $\cO_L$-scheme $\cY_1${\em )}. Then:
\begin{enumerate}
\item The complement of $Y_1$ in $X_1$ is affine.
\item
The generic fibre $Y_0$ of $\cY_0$ is smooth over $K$,
the morphism
$Y_1 \to Y_0$ is an \'etale $G$-torsor {\em (}so in particular $Y_1$ is 
also smooth over~$K${\em )},
$\cY_0$ is a semistable model of $Y_0$ over~$\cO_K$,
$\cY_1$ is a semistable model for $(Y_1)_{/L}$ over~$\cO_L$,
and the morphism $\cY_1 \to \cY_0$ is tamely ramified;  consequently
$\cY_1 \to \cY_0$ is again a tamely ramified semistable context.
\item
Suppose that $G$ is abelian.
If $D'$ is an irreducible component of $(\cY_1)_s$, contained in an irreducible
component $D$ of $(\cX_1)_s$, then the homomorphism $\psi: I(L/K) \to G$, 
which describes the action of $I(L/K)$
on~$D$, also describes the action of $I(L/K)$
on~$D'$.
\end{enumerate}
\end{prop}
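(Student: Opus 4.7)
My plan is to address the three parts in order, with the bulk of the technical work being a local computation for part~(2). For part~(1), I would use the standard fact that the complement of a hyperplane section of a projective variety is affine, so $X_0 \setminus Y_0$ is affine. Since $\pi : X_1 \to X_0$ is finite and $Y_1 = \pi^{-1}(Y_0)$, the morphism $X_1 \setminus Y_1 \to X_0 \setminus Y_0$ is finite, and hence $X_1 \setminus Y_1$ is affine.

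For part~(2), I would work locally at a closed geometric point $\overline{x}_0$ of $\cY_0 \cap (\cX_0)_s$. Using the normal crossings hypothesis on $\cY_0 \cup (\cX_0)_s$ together with the regularity of $\cY_0$, I would first observe that $m < n$ at each such point (otherwise no regular parameter cutting out $\cY_0$ could be added to $x_1, \dots, x_m$ to form part of a regular system of parameters in a ring of Krull dimension $n$). After a suitable change of the coordinates $x_{m+2}, \dots, x_n$ in~(\ref{eqn:semistable model}), the isomorphism can be arranged so that $\cY_0$ is cut out locally by $x_{m+1}$. Invoking Lemma~\ref{lem:tame} (whose ``furthermore'' clause permits any such choice of isomorphism), this translates into $\cY_1 = V(y_{m+1})$ in the corresponding local model for $\cX_1$, yielding a semistable presentation
$$\widehat{\cO_{\overline{x}_1, \cY_1}} \cong \cshO{L}[[y_1, \dots, y_m, y_{m+2}, \dots, y_n]]/(y_1 \cdots y_m - \varpi_L)$$
of $\cY_1$ over $\cO_L$.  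The remaining formulas $x_j = y_j^e$ (for $j \leq m$) and $x_j = y_j$ (for $m < j \leq n$, $j \neq m+1$) then exhibit $\cY_1 \to \cY_0$ as tamely ramified along the special fibre of $\cY_0$, again via Lemma~\ref{lem:tame}.  Smoothness of $Y_0$ (and hence of $Y_1$) over $K$ follows from semistability by passage to generic fibres, while the \'etale $G$-torsor structure on $Y_1 \to Y_0$ follows by base change from the $G$-torsor structure on $X_1 \to X_0$.

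For part~(3), I would recall that in the proof of Lemma~\ref{lem:inertia computation} the homomorphism $\psi$ was produced by applying Lemma~\ref{lem:action on torsors} to the restriction of $\cX_1 \to (\cX_0)_{/\cO_L}$ over an open subset $\cU_0$ of $(\cX_0)_{/\cO_L}$ on which this morphism is \'etale.  The intersection of $\cU_0$ with $(\cY_0)_{/\cO_L}$ lies in the \'etale locus of the analogous morphism $\cY_1 \to (\cY_0)_{/\cO_L}$, and by Lemma~\ref{lem:action on torsors} the homomorphism $\psi$ is determined by the $I(L/K)^{\op}$-action on any single geometric point of the torsor lying above this locus.  I would therefore pick a generic geometric point of $D'$ whose image in $(\cY_0)_{/\cO_L}$ lies in $\cU_0 \cap (\cY_0)_{/\cO_L}$; the $I(L/K)^{\op}$-action on this point simultaneously computes the $\psi$ attached to $D$ and the $\psi$ attached to $D'$, and since this is literally the same action on the same point, the two homomorphisms coincide.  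The main obstacle is the local bookkeeping in part~(2): adapting the isomorphism~(\ref{eqn:semistable model}) so that $\cY_0$ becomes a coordinate hyperplane, and verifying that the tame ramification formula of Lemma~\ref{lem:tame} is preserved under restriction to $\cY_0$.  Once this is in hand, the conclusions of parts~(2) and~(3) follow essentially formally.
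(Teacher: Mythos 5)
Your proposal for parts (1) and (2) matches the paper's proof essentially exactly: part (1) uses the affineness of complements of hypersurface sections and the finiteness of $\pi$, and part (2) is the same local computation (the paper normalizes so that $\cY_0$ is cut out by $x_n$ rather than $x_{m+1}$, but this is purely cosmetic), invoking the ``furthermore'' clause of Lemma~\ref{lem:tame} to choose a compatible system of coordinates. The one point of substance you gloss over in part (2) is the step showing that the local equation of $\cY_0$ together with $x_1,\dots,x_m$ forms part of a \emph{regular system of parameters}; the paper uses \cite[Lem.~1.8.4]{Groth-Murre} to upgrade normal crossings to strict normal crossings in the formal neighbourhood, which is what makes this work. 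Your argument that $m < n$ is fine, but the coordinate-change claim deserves the explicit reference.

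For part (3), your route is genuinely different from the paper's, though equivalent in substance. The paper simply observes that the $I(L/K)^{\op}\times G$-action on $(\cY_1)_s$ is the restriction of the action on $(\cX_1)_s$, and that $D'\subset D$; since $\psi$ describes the action of $I(L/K)$ on all of $D$ (not merely over the \'etale locus), it \emph{a fortiori} describes it on $D'$, and the uniqueness of such a homomorphism (coming from simple transitivity of the $G$-action on torsor fibres) gives $\psi'=\psi$. Your argument reopens the torsor construction from Lemma~\ref{lem:inertia computation} and locates a single geometric point that computes both $\psi$ and $\psi'$. This works, but it quietly requires that the \'etale locus $\cU_0$ of $\cX_1 \to (\cX_0)_{/\cO_L}$ has nonempty (indeed, dense) intersection with the special fibre of $(\cY_0)_{/\cO_L}$, and that $D'$ has a point lying over that intersection. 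This is true --- from the explicit local model in part (2), the non-\'etale locus is cut out by $x_1\cdots x_m=0$ while $\cY_0$ is a different coordinate hyperplane --- but it is an extra check that the paper's restriction argument avoids entirely. Your approach buys nothing over the paper's here, and carries a small verification burden you did not discharge; I would recommend the paper's more direct statement.
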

\begin{proof}
Since $\cY_0$ is a hypersurface section of $\cX_0$, its generic fibre
$Y_0$ is a hypersurface section of $X_0$.  Thus its complement is affine.
Since $\pi$ is a finite morphism by assumption, the complement of $Y_1$
in $X_1$ is again affine.    Since $Y_0$ is a regular projective
$K$-scheme (being the generic fibre of $\cY_0$, which is regular by assumption),
it is in fact smooth over $K$.  By definition $Y_1$ is the preimage of $Y_0$ under
the morphism $X_1 \to X_0$, which is an \'etale $G$-torsor by assumption.
Thus $Y_1 \to Y_0$ is indeed an \'etale $G$-torsor (and so $Y_1$ is also 
smooth over $K$).

Let $\overline{x}_0$ be a closed geometric point of the special fibre $(\cY_0)_s$.
Since
$(\widehat{(\cX_0)_{\overline{x}_0}})_s \cup \widehat{(\cY_0)_{\overline{x}_0}}$
forms a divisor with normal crossings,
since each component of
$(\widehat{(\cX_0)_{\overline{x}_0}})_s$
is regular, and since $\cY_0$ is regular by assumption,
it follows from \cite[Lem.~1.8.4]{Groth-Murre} that
$(\widehat{(\cX_0)_{\overline{x}_0}})_s \cup \widehat{(\cY_0)_{\overline{x}_0}}$
is in fact a divisor with strictly normal crossings in
$\widehat{(\cX_0)_{\overline{x}_0}}$,
and hence the local equation $\ell$ of $\widehat{(\cY_0)_{\overline{x}_0}}$,
together with the elements $x_1,\ldots,x_m$ that cut out the irreducible
components of 
$(\widehat{(\cX_0)_{\overline{x}_0}})_s$,
form part of a regular system of parameters for
$\widehat{\cO_{\overline{x}_0,\cX_0}}$.
Thus we may choose a model of the form~(\ref{eqn:semistable model}) for
$\widehat{(\cX_0)_{\overline{x}_0}}$ for which $m < n$
and in which $\ell$ is equal to the element $x_n$;
i.e.\ in which $\widehat{(\cY_0)_{\overline{x}_0}}$
is the zero locus of the element $x_n$.

We now choose a closed geometric point $\overline{x}_1$
of $(\cX_1)_s$ lying over $\overline{x}_0$,
as well as a model for the tamely ramified morphism
$\widehat{(\cX_1)_{\overline{x}_1}} \to
\widehat{(\cX_0)_{\overline{x}_0}}$
as in part~(2) of Lemma~\ref{lem:tame}.
Thus this morphism has the form
$$
\Spec \cshO{L}[[y_1,\ldots,y_n]]/(y_1\cdots y_m - \unif_L)
\to
\Spec \cshO{K}[[x_1,\ldots,x_n]]/(x_1\cdots x_m - \unif_K),$$
with $x_j = y_j^e$ for $1 \leq j \leq m$, and $x_j = y_j$ for $m < j \leq n.$
In particular, we see that $x_n = y_n$,
and thus we see that the
induced morphism 
\numequation
\label{eqn:tame divisor morphism}
\widehat{(\cY_1)_{\overline{x}_1}} \to \widehat{(\cY_0)_{\overline{x}_0}}
\end{equation}
can be written as
\nummultline
\label{eqn:explicit model}
\Spec \cshO{L}[[y_1,\ldots,y_{n-1}]]/(y_1\dots y_m - \unif_L)
\\
\to
\Spec \cshO{L}[[x_1,\ldots,x_{n-1}]]/(x_1 \ldots x_m - \unif_K).
\end{multline}
Thus we see that $\cY_0$ and $\cY_1$ are indeed semistable models
of their generic fibres (over $\cO_K$ and $\cO_L$ respectively),
and that the morphism $\cY_1 \to \cY_0$ is tamely ramified.
This completes the verification of~(2).  The
claim of~(3) follows
from the fact that the action of $I(L/K)^{\op} \times G$ on
$(\cY_1)_s$ is the restriction of the corresponding action
on $(\cX_1)_s$, together with the fact that any component
of $(\cY_1)_s$ is contained in a component of $(\cX_1)_s$.
\end{proof}

We now suppose that $E$ is an algebraic extension of $\Q_p$ containing $K_0$.
Recall that if $\rho:G_{K} \to \GL_n(E)$ is a potentially semistable 
representation, then we may attach a Weil--Deligne representation $\WD(\rho)$ to $\rho$
by first passing to the potentially semistable Dieudonn\'e module $D_{\pst}(\rho)$ of
$\rho$, which is a module over $E\otimes_{\Q_p} K_0$,
then fixing an embedding $K_0 \hookrightarrow E$,
and hence a projection $\operatorname{pr}:E\otimes_{\Q_p} K_0 \to E,$
and, finally, forming $\WD(\rho) := E\otimes_{E\otimes_{\Q_p}
  K_0,\operatorname{pr}} D_{\pst}.$ Although $\WD(\rho)$ depends on the choice of the embedding $K_0 \hookrightarrow E$,
up to isomorphism it is independent of this choice, as the Frobenius $\phi$ on
$D_{\pst}(\rho)$ provides isomorphisms between the different choices. (See for example Appendix B of
\cite{MR1639612} and~\cite[p.\ 78-79]{MR2060030} for discussions of this construction and its properties.)

In the tame strictly semistable case, the following result will allow us to describe the inertial 
part of the Weil--Deligne representation associated to the $p$-adic \'etale cohomology 
of $X_1$,
or of a pair $(X_1,Y_1)$ that arises in the context of the preceding proposition.
Before stating the result we introduce some additional notation, and an additional
assumption.

Assume that $G$ is abelian, and let $J$ denote the set of $I(L/K) \times G$ orbits
on the set of irreducible components of $(\cX_1)_s$,
and let $D_{j}$ (for $j \in J$)
denote the union of the components lying in the orbit labelled 
by $j$. Let $\psi_j:I(L/K)\to G$ be the homomorphism provided by Lemma~\ref{lem:inertia computation}, describing the action of
$I(L/K)$ on the points of $D_j$.

\begin{prop}
\label{prop:Weil--Deligne} Suppose that we are in a tame strictly
semistable context as above.
Either 
let $W$ denote the Weil--Deligne representation associated to the potentially
semistable $G_K$-representation
$H^i_{\et}\bigr((X_1)_{/\Kbar},E\bigr)$, 
or else
suppose that we are in the context of Proposition~{\em \ref{prop:tame hyperplane}},
and
let $W$ denote the Weil--Deligne representation associated to the potentially
semistable $G_K$-representation
$H^i_{\et}\bigr((X_1)_{/\Kbar},(Y_1)_{/\Kbar}),E\bigr)$
{\em (}here $i$ is some given degree of cohomology{\em )};
in either case $W$
is a representation of the product $\WD_K \times G$.
Assume furthermore that $G$ is abelian.

Then, if, as in the above discussion, $\tJ$ denotes the
set of $I(L/K)\times G$-orbits of irreducible components 
of $(\cX_1)_s$, we may decompose $W$ as a direct sum
$W = \bigoplus_{\tj \in \tJ} W_{\tj}$, such that on $W_{\tj}$, the action of the
inertia group in $\W_K$ is obtained by composing the $G$-action on $W_{\tj}$
with the homomorphism
$I_K \to I(L/K) \buildrel \psi_{\tj} \over \longrightarrow~G.$
\end{prop}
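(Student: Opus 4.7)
The strategy is to transport the problem from the Weil--Deligne representation side to the geometry of the semistable special fibre $(\cX_1)_s$, where the $I(L/K) \times G$-action is visible and controlled by Lemma~\ref{lem:inertia computation}. First, I would apply Tsuji's semistable comparison theorem (in the first case), or its relative analogue for the pair $((X_1)_{/L}, (Y_1)_{/L})$ with log structure incorporating the divisor $\cY_1$ (in the second case, set up as in Proposition~\ref{prop:tame hyperplane}), to identify $D_{\pst}$ of the relevant cohomology group with the log-crystalline cohomology of $(\cX_1)_s$ endowed with its natural log structure. After base change to $E$ along the fixed embedding, this produces $W$ as a $\WD_K$-representation. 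Crucially, the comparison is functorial in $\cX_1$, so the geometric $I(L/K) \times G$-action transports to a commuting action on the log-crystalline cohomology matching the $\WD_K \times G$-action on $W$ (with $I_K$ acting through its quotient $I(L/K)$).

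Next, I would invoke the Mokrane--Nakkajima weight spectral sequence, whose $E_1$-page decomposes as a direct sum of (appropriately twisted) crystalline cohomology groups of the strata $(\cX_1)_s^{(p)}$, i.e.\ the disjoint unions of $p$-fold intersections of irreducible components of $(\cX_1)_s$. This spectral sequence is $I(L/K) \times G$-equivariant. The essential geometric input is then the following. Given a $p$-fold intersection $Z = D_{\alpha_1} \cap \cdots \cap D_{\alpha_p}$ with $D_{\alpha_i}$ belonging to orbit $\tj_i$ (the orbits being pairwise distinct by Lemma~\ref{lem:intersection}), Lemma~\ref{lem:inertia computation} applied to each $D_{\alpha_i}$ shows that for every $\sigma \in I(L/K)$, the automorphism of $Z$ induced by $\sigma$ coincides with that induced by $\psi_{\tj_i}(\sigma) \in G$, simultaneously for all $i$. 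Consequently, on the $\chi$-isotypic subspace (for any $\chi \in \hat G$) of any cohomology group of $Z$, the element $\sigma$ acts as the scalar $\chi(\psi_{\tj_i}(\sigma))$, necessarily independent of $i$. It follows that on every irreducible $I(L/K) \times G$-constituent of the $E_1$-page on which $G$ acts by the character $\chi$, the inertia group $I(L/K)$ acts by a character of the form $\chi \circ \psi_{\tj}$ for some $\tj \in \tJ$.

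This character-theoretic constraint is preserved by the spectral sequence differentials and passes to the abutment, so the analogous statement holds for $W$ itself. To produce the decomposition $W = \bigoplus_{\tj} W_{\tj}$, I would decompose $W = \bigoplus_{\chi \in \hat G} W_\chi$ into $G$-isotypic pieces (possible since $G$ is abelian), then further decompose each $W_\chi$ by $I(L/K)$-character into pieces $W_{\chi,\eta}$. By the above, only characters $\eta$ of the form $\chi \circ \psi_{\tj}$ can occur with nonzero contribution. For each such nonzero piece, I would arbitrarily select some $\tj \in \tJ$ realising $\eta = \chi \circ \psi_{\tj}$, and define $W_{\tj}$ to be the sum, over all $\chi$, of the pieces assigned to it. By construction, on the $\chi$-isotypic part of $W_{\tj}$, the element $\sigma \in I(L/K)$ acts as the scalar $\chi(\psi_{\tj}(\sigma))$, which is exactly the action of $\psi_{\tj}(\sigma) \in G$ via the $G$-action, as required.

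The principal obstacle is ensuring the equivariant comparison theorem and weight spectral sequence are available in the precise generality needed---especially in the relative case of Proposition~\ref{prop:tame hyperplane}, where one must equip $\cX_1$ with a log structure incorporating both its special fibre and the auxiliary divisor $\cY_1$, verify that this log-smooth situation falls within the scope of Tsuji's (or its relative refinement's) comparison, and confirm that the resulting Mokrane-type weight filtration is genuinely compatible with the $I(L/K) \times G$-action on all the strata of $(\cX_1)_s$ that appear.
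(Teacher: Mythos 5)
Your core argument is correct and essentially matches the paper: both routes pass through Tsuji's comparison theorem identifying the potentially semistable Dieudonn\'e module with log-crystalline cohomology of $(\cX_1)_s$, the Mokrane weight spectral sequence, Lemma~\ref{lem:intersection} (distinct components meeting must lie in distinct orbits), and Lemma~\ref{lem:inertia computation} (on each orbit of components, $I(L/K)$ acts through some $\psi_{\tj}$ composed with the $G$-action). Your character-theoretic analysis of the $E_1$-terms and the isotypic decomposition of $W$ is a slightly more explicit version of the paper's observation that the claimed property is preserved under subquotients and extensions.

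The genuine divergence is in the pair case, and there your route has a real gap that you yourself flag. You propose to prove an equivariant \emph{relative} comparison theorem for $H^i_{\et}\bigl((X_1)_{\Kbar},(Y_1)_{\Kbar},E\bigr)$ by endowing $\cX_1$ with a log structure that incorporates the auxiliary divisor $\cY_1$, and then acknowledge that verifying Tsuji's theorem (and the Mokrane-type weight filtration) in that generality is "the principal obstacle." This is indeed not available off the shelf and would require nontrivial work. The paper avoids the issue entirely by a much simpler device, which you should adopt instead: since the action of inertia on $W$ factors through a finite group and $E$ has characteristic zero, every $\W_K\times G$-representation in sight is semisimple, so the desired property of $W$ (that $W$ is a direct sum of pieces on which $I_K$ acts via $\chi\circ\psi_{\tj}$) is stable under formation of subobjects, quotients, and extensions. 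The long exact sequence of the pair $(X_1,Y_1)$ then reduces the assertion for the pair to the assertions for $X_1$ and $Y_1$ individually, and Proposition~\ref{prop:tame hyperplane}(2) guarantees that $\cY_1 \to \cY_0$ is again a tame strictly semistable context, so the case of $Y_1$ is of the same type as $X_1$. This eliminates any need for a relative comparison theorem. You should replace the relative log-structure approach with this reduction; with that substitution, your argument is complete.
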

\begin{proof}Since the action of the inertia subgroup of $\W_K$ on $W$ factors
through a finite group, and representations of a finite group over
a field of characteristic zero are semisimple, the claimed property
of $W$ is stable under the formation of subobjects, quotients, and
extensions (in the category of $W_K\times G$-representations).
A consideration of the long exact sequence of cohomology
associated to the pair $(X_1, Y_1)$ (cf.\ Appendix \ref{appendix: cohomology results on
  pairs etc}) then reduces the claim for 
the cohomology of the pair to the claim for the cohomology of $X_1$ and $Y_1$ individually.
Since Proposition~\ref{prop:tame hyperplane} shows that the strictly semistable model
$\cY_1$ of $(Y_1)_{/L}$ behaves in an identical manner to the strictly semistable model
of $\cX_1$ of $(X_1)_{/L}$,
it in fact suffices to consider the case of $X_1$.

Thus we now restrict our attention to the $\W_K$-representation $W$
underlying the potentially semistable Dieudonn\'e module associated to
$H^i_{\et}\bigr((X_1)_{/\Kbar},E\bigr)$.
By \cite{Tsu} this Dieudonn\'e module is naturally identified with
the log-crystalline cohomology
$H^i\bigr((\cX_1)_s^\times/W(k)^\times\bigr)\otimes_{W(k)}E$
of the special fibre $(\cX_1)_s$ with its natural
log-structure.
Lemma~\ref{lem:intersection} shows that if an intersection
of distinct components of the special fibre $(\cX_1)_s$ is non-empty,
then the various components appearing must lie in mutually distinct
orbits of $I(L/K) \times G$ acting on the set of components. Recalling that $J$ denotes the indexing set for
the collection $\{D_j\}_{j\in J}$ of $I(L/K)\times G$ -orbits of components of $(\cX_1)_s$,
this log-crystalline cohomology may be computed by the following spectral sequence of \cite{Mok}:

\begin{multline*}
E_1^{-m,i+m} = 
\bigoplus_{l\ge \max\{0, -m\}
\atop
\{j_1,\dots,j_{2l+m+1}\}\subset
  J}H^{i-2l-m}_{\cris}\bigl(D_{j_1}\cap\dots\cap
D_{j_{2l+m+1}}/W(k)\bigr)\otimes_{W(k)}E(-l-m)
\\
\implies H^i\bigl((\cX_1)_s^\times/W(k)^\times\bigr)\otimes_{W(k)}E. 
\end{multline*}

The constructions of~\cite{Tsu,Mok} are both functorial, so that everything here
is compatible with the $I(L/K)\times G$-actions. Each of the summands in the $E_1$-term is naturally
an $I(L/K)\times G$-representation, and furthermore the action of
$I(L/K)$ is given by the composite of the action
of $G$ with one of the characters $\psi_{\tj}$.
Thus the $E_1$-terms of this spectral sequence satisfies the claimed property
of $W$.  Thus $W$ also satisfies this property,
since it is obtained as a successive extension of subquotients
of these $E_1$-terms.
\end{proof}

We are now ready to prove our equivariant versions of Theorems~\ref{thm:first cohom} 
and \ref{thm:arb cohom}.  For the first result, we place ourselves in the local
case (since the global case immediately reduces to the local case by passing
from $K$ to $K_v$).

\begin{theorem} 
\label{thm:degree one cohom equivariant}
Suppose that we are in the tame strictly semistable context described above.
Then the $G_K\times G$-representation $H^1_{\et}\bigl((X_1)_{\overline{K}},\Fp\bigr)$ 
embeds $G_K\times G$-equivariantly
into  the reduction
modulo the uniformiser of a $G_K\times G$-invariant
$\mathcal O_E$-lattice in a representation $V$
of $G_K\times G$
over~$E$, having the following properties:
\begin{enumerate}
\item the restriction of $V$ to $G_L$
is semistable,
with Hodge--Tate weights contained in the interval $[-1,0]$;
\item The Weil--Deligne representation associated to $V$,
which is naturally a representation of $\WD_K \times G $,
when restricted to a representation of $I_K \times G$,
can be written as
a direct sum $\bigoplus_{\tj \in \tJ} W_{\tj}$  of $I_K \times G$-representations,
 where $\tj$ runs over the same
index set that labels the set of $I(L/K)\times G$-orbits of irreducible components 
of $(\cX_1)_s$, such that on $W_{\tj}$, the action of the
inertia group in $\W_K$ is obtained by composing the $G$-action on $W_{\tj}$
with the homomorphism
$I_K \to I(L/K) \buildrel \psi_{\tj} \over \longrightarrow~G.$
\end{enumerate}
\end{theorem}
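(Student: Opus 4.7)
The plan is to induct on $d = \dim X_0 = \dim X_1$, mimicking the proof of Theorem~\ref{thm:first cohom} while keeping track of the additional $G$-action and of the inertial descent data encoded by the $I(L/K) \times G$-orbit structure on $(\cX_1)_s$.

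In the base case $d = 1$, the variety $X_1$ is a smooth projective curve over $K$, so $H^1_{\et}\bigl((X_1)_{\overline{K}}, \Z_p\bigr)$ is torsion-free and $H^1_{\et}\bigl((X_1)_{\overline{K}},\F_p\bigr)$ is canonically its reduction mod~$p$. I set $V := H^1_{\et}\bigl((X_1)_{\overline{K}}, E\bigr)$, equipped with the natural $G_K \times G$-stable $\cO_E$-lattice $H^1_{\et}\bigl((X_1)_{\overline{K}}, \cO_E\bigr)$, into whose reduction mod $\unif$ the representation $H^1_{\et}\bigl((X_1)_{\overline{K}}, \F_p\bigr)$ tautologically embeds. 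Property~(1) follows from Tsuji's semistable comparison theorem \cite{Tsu} applied to the semistable model $\cX_1$ of $(X_1)_{/L}$, and property~(2) is the special case of Proposition~\ref{prop:Weil--Deligne} for $H^1$ of $X_1$ alone.

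For the inductive step with $d > 1$, I use the Bertini-type construction in the proof of Proposition~\ref{prop: Bertini with two hypersurfaces} (where only the single-hypersurface version is needed) with respect to a projective embedding of $\cX_0$ to produce a strictly semistable hypersurface section $\cY_0$ of $\cX_0$ whose union with $(\cX_0)_s$ is a strict normal crossings divisor. Setting $Y_0$ to be the generic fibre of $\cY_0$, $Y_1 := \pi^{-1}(Y_0)$, and $\cY_1$ to be the preimage of $\cY_0$ in $\cX_1$, Proposition~\ref{prop:tame hyperplane} ensures that $(\cY_0, \cY_1)$ is again a tame semistable context, and Lemma~\ref{lem:strict tame semistable implies X_1 strict} upgrades this to a tame strictly semistable context. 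The induction hypothesis then supplies a representation $V$ with properties~(1) and~(2) for the context $(\cY_0,\cY_1)$, together with a $G_K \times G$-stable lattice into whose reduction $H^1_{\et}\bigl((Y_1)_{\overline{K}}, \F_p\bigr)$ embeds $G_K \times G$-equivariantly. Since $Y_1 \subset X_1$ is a smooth ample divisor (ample because $\pi$ is finite, smooth because $\pi$ is \'etale and $Y_0$ is smooth), the weak Lefschetz theorem \cite[XIV 3.3]{MR0354654} with $\F_p$-coefficients gives a $G_K \times G$-equivariant embedding $H^1_{\et}\bigl((X_1)_{\overline{K}},\F_p\bigr) \hookrightarrow H^1_{\et}\bigl((Y_1)_{\overline{K}},\F_p\bigr)$; composing yields the desired embedding into the reduction of the lattice in $V$, and property~(1) is manifestly inherited.

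It remains to reformulate property~(2), as inherited from $(\cY_0, \cY_1)$, in terms of the orbit set $\tJ$ attached to $(\cX_0, \cX_1)$. Let $\tJ'$ denote the analogous orbit set attached to $(\cY_0, \cY_1)$. Each irreducible component of $(\cY_1)_s$ is contained in an irreducible component of $(\cX_1)_s$, inducing a natural map $\tJ' \to \tJ$ on orbit sets, and Proposition~\ref{prop:tame hyperplane}(3) guarantees that this map is compatible with the characters $\psi$. Regrouping the direct summands $W_{\tj'}$ ($\tj' \in \tJ'$) of the Weil--Deligne representation of $V$ according to the fibres of this orbit map produces the required $\tJ$-indexed decomposition, on the $\tj$-block of which inertia acts through $\psi_{\tj}$ composed with the $G$-action. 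The main technical delicacy in this strategy is to ensure that the hypersurface section $\cY_0$ can be chosen so that $(\cY_0, \cY_1)$ is again a tame strictly semistable context; this is precisely what Propositions~\ref{prop: Bertini with two hypersurfaces} and~\ref{prop:tame hyperplane} deliver.
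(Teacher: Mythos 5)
Your proof follows the same route as the paper's: induct on dimension via strictly semistable hypersurface sections of $\cX_0$ furnished by the Bertini argument, use Proposition~\ref{prop:tame hyperplane} to keep being in a tame strictly semistable context and to propagate the characters $\psi$, apply weak Lefschetz to embed $H^1$ of $X_1$ into $H^1$ of the section, and settle the curve base case via \cite{Tsu} together with Proposition~\ref{prop:Weil--Deligne}. Your ``regrouping'' of the orbit sets $\tJ' \to \tJ$ makes explicit something the paper folds into the phrase ``taking into account Proposition~\ref{prop:tame hyperplane},'' and is correct since Proposition~\ref{prop:tame hyperplane}(3) guarantees the characters $\psi$ agree along that map. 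One small slip: Lemma~\ref{lem:strict tame semistable implies X_1 strict} does not ``upgrade'' a tame semistable context to a tame strictly semistable one---it has a tame \emph{strictly} semistable context as its hypothesis; the strict semistability of $(\cY_0,\cY_1)$ already comes from your choice of $\cY_0$ strictly semistable (and the lemma then gives that $\cY_1$ is strictly semistable too, which is used in Proposition~\ref{prop:Weil--Deligne} rather than here).
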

\begin{proof}
We follow the proof of Theorem~\ref{thm:first cohom},
proceeding by descending induction on the dimension of $X_0$ and $X_1$, 
and passing to appropriately chosen hypersurface sections $\mathcal Y_0$ 
of $\mathcal X_0$,
and their corresponding preimages $Y_1$ in $X_1$
and $\mathcal Y_1$ in $(\mathcal X_1)_{/L}$.
Taking into account Proposition~\ref{prop:tame hyperplane},
we thus reduce to the case when $X_0$ and $X_1$ are curves, so that
$H^1_{\et}\bigl((X_1)_{/\Kbar},\F_p\bigr)$
is the reduction mod $p$ of 
$H^1_{\et}\bigl((X_1)_{/\Kbar},\Z_p\bigr)$,
which is in turn a lattice in
$H^1_{\et}\bigl((X_1)_{/\Kbar},\Q_p\bigr)$.
This latter representation is potentially semistable with Hodge--Tate weights
in $[-1,0]$, by \cite{Tsu}; 
the claim regarding Weil--Deligne representations follows from
Proposition~\ref{prop:Weil--Deligne}.
\end{proof}

For our second result, we allow ourselves to be in either one of the local or global
contexts.

\begin{theorem} 
\label{thm:arb cohom equivariant}
Suppose that we are in the tame strictly semistable context described above,
and let $\rho:G_K \times G \to \GL_n(k_E)$ be an irreducible and continuous
representation that embeds as a subquotient of
$H^i_{\et}\bigl((X_1)_{\overline{K}},k_E\bigr)$.
Then $\rho$
also embeds as a
subquotient of a $G_K\times G$-representation
over $k_E$
which is the reduction
modulo the uniformiser of a $G_K\times G$-invariant
$\mathcal O_E$-lattice in a representation $V$
of $G_K\times G$
over~$E$, having the following properties:
\begin{enumerate}
\item the representation $V$
becomes semistable when restricted to $G_L$
{\em (}respectively  to the decomposition group $D_w \subset G_L$
in the global case{\em )},
with Hodge--Tate weights contained in the interval $[-i,0]$;
\item The Weil--Deligne representation associated to $V$,
which is naturally a representation of $\WD_K \times G $
{\em (}respectively of $ \WD_{K_v} \times G$ in the global case{\em )},
when restricted to a representation of $I_K \times G$
{\em (}respectively of $I_{K_v} \times G$ in the global case{\em )},
can be written as
a direct sum
$\bigoplus_{\tj} W_{\tj}$  of $I_K \times G$-representations
{\em (}respectively of $I_{K_v} \times G$-representations{\em )},
where $\tj$ runs over the same
index set that labels the set of $I(L/K)\times G$-orbits {\em (}respectively of $I_{K_v} \times G$-orbits{\em )} of irreducible components 
of $(\cX_1)_s$, such that on $W_{\tj}$, the action of the
inertia group is obtained by composing the $G$-action on $W_{\tj}$
with the homomorphism
$I_K \to I(L/K) \buildrel \psi_{\tj} \over \longrightarrow G$
{\em (}respectively the homomorphism
$I_{K_v} \to I(L_w/K_v) \buildrel \psi_{\tj} \over \longrightarrow G$
in the global case{\em )}.
\end{enumerate}\end{theorem}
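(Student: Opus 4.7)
The plan is to imitate the proof of Theorem~\ref{thm:arb cohom}, proceeding by induction on the dimension of $X_0$ (equivalently of $X_1$), but tracking the $G$-equivariance throughout and invoking Propositions~\ref{prop:tame hyperplane} and~\ref{prop:Weil--Deligne} at the key steps to supply the extra information required in the equivariant setting. All cohomology groups appearing in the argument are naturally $G_K \times G$-modules (respectively $G_K \times G$-modules whose restrictions to $D_v \times G$ carry the relevant local structure, in the global case), since $G$ acts on $X_1$ and this action commutes with the $G_K$-action on geometric cohomology.

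For the base case, when $X_0$ and $X_1$ have dimension at most one, $H^i_{\et}\bigl((X_1)_{\overline{K}},\mathcal{O}_E\bigr)$ is torsion-free in the degrees relevant to us, so $H^i_{\et}\bigl((X_1)_{\overline{K}},k_E\bigr)$ is itself the reduction modulo the uniformiser of the $G_K\times G$-invariant $\cO_E$-lattice $H^i_{\et}\bigl((X_1)_{\overline{K}},\mathcal{O}_E\bigr)$ in $V := H^i_{\et}\bigl((X_1)_{\overline{K}},E\bigr)$. The latter is potentially semistable with Hodge--Tate weights in $[-i,0]$, becoming semistable on restriction to $G_L$ (respectively $D_w$) by \cite{Tsu}, which gives~(1), and the description~(2) of the associated Weil--Deligne representation is exactly the output of Proposition~\ref{prop:Weil--Deligne} in the $X_1$ case.

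For the inductive step, I would apply (a mild strengthening of) Proposition~\ref{prop: Bertini with two hypersurfaces} to $\cX_0$, choosing smooth hypersurface sections $\cY_0$ and $\cZ_0$ whose intersection is transverse, such that each of $\cY_0$, $\cZ_0$, $\cY_0 \cap \cZ_0$ is strictly semistable and such that $\cY_0 \cup (\cX_0)_s$, $\cZ_0 \cup (\cX_0)_s$, and $(\cY_0\cap\cZ_0) \cup (\cX_0)_s$ are divisors with normal crossings on the appropriate ambient schemes. Taking preimages $\cY_1$ and $\cZ_1$ in $\cX_1$, Proposition~\ref{prop:tame hyperplane} ensures that each of $\cY_1 \to \cY_0$, $\cZ_1 \to \cZ_0$, and $(\cY_1\cap\cZ_1) \to (\cY_0\cap\cZ_0)$ is again a tame strictly semistable context, with the same characters $\psi_{\tj}$ (restricted to the appropriate subsets of components), so the inductive hypothesis applies. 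The three long exact sequences used in the proof of Theorem~\ref{thm:arb cohom} are $G_K \times G$-equivariant, and since $\rho$ is irreducible it must embed as a subquotient either of $H^{i-2}_{\et}\bigl((Y_1)_{\overline{K}},k_E\bigr)(-1)$, of $H^i_{\et}\bigl((Z_1)_{\overline{K}},k_E\bigr)$, of $H^{i-1}_{\et}\bigl((Y_1\cap Z_1)_{\overline{K}},k_E\bigr)(-1)$, or of $H^i_{\et}\bigl((X_1\setminus Y_1)_{\overline{K}},(Z_1\setminus Y_1)_{\overline{K}},k_E\bigr)$. In the first three cases we invoke the inductive hypothesis; in the last case, the vanishing outside middle degree (recalled at the start of this section) produces a $G_K\times G$-invariant lattice $H^i_{\et}\bigl((X_1\setminus Y_1)_{\overline{K}},(Z_1\setminus Y_1)_{\overline{K}},\cO_E\bigr)$ in $V := H^i_{\et}\bigl((X_1\setminus Y_1)_{\overline{K}},(Z_1\setminus Y_1)_{\overline{K}},E\bigr)$ whose reduction contains $\rho$; potential semistability with Hodge--Tate weights in $[-i,0]$ follows from \cite{Go20111127}, and the Weil--Deligne description~(2) is supplied by Proposition~\ref{prop:Weil--Deligne} in the pair case applied to $(X_1,Y_1)$.

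The main obstacle I anticipate is verifying that Proposition~\ref{prop: Bertini with two hypersurfaces} can indeed be strengthened to produce hypersurface sections of $\cX_0$ whose unions with $(\cX_0)_s$ are normal crossings divisors; this is exactly the input required by Proposition~\ref{prop:tame hyperplane}, and is not quite stated as output by Proposition~\ref{prop: Bertini with two hypersurfaces}. This however reduces to imposing finitely many extra transversality conditions at the generic points of components and strata of $(\cX_0)_s$, each of which cuts out an open subset of positive Haar measure in the Poonen--Bertini local space $\hat{\cO}_P$, so it can be accommodated within the framework of the proof of Proposition~\ref{prop: Bertini with two hypersurfaces}; the global case proceeds as before using density of $K$ in $K_v$. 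Beyond this, the argument is essentially a careful bookkeeping exercise: at every step the constructions are canonical, the $G$-action is preserved, and the inertial $\psi_{\tj}$-data of the ambient context restrict correctly to the hypersurface sections by Proposition~\ref{prop:tame hyperplane}(3).
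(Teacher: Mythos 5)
Your argument is correct and follows the same route as the paper's own (very terse) proof, which simply refers back to the proof of Theorem~\ref{thm:arb cohom} combined with Propositions~\ref{prop:tame hyperplane} and~\ref{prop:Weil--Deligne}, tracking equivariance in the evident way. The one substantive concern you raise---that Proposition~\ref{prop: Bertini with two hypersurfaces} as stated produces hypersurface sections with strictly semistable reduction but does not explicitly assert that their union with the special fibre is a normal crossings divisor, which is the hypothesis of Proposition~\ref{prop:tame hyperplane}---is a legitimate reading of the statement as written, but the Bertini results of~\cite{JS} which its proof invokes in fact deliver exactly this stronger conclusion (the semistable Bertini theorem there outputs a hypersurface section meeting the special fibre in normal crossings), so the argument goes through without a genuine gap. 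Your proposed remedy, imposing extra Poonen--Bertini conditions at the strata of the special fibre, amounts to the same thing and is a perfectly acceptable alternative way to close the loophole.
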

\begin{proof}
  We can be proved in exactly the same way as Theorem~\ref{thm:arb
    cohom}, taking into account Propositions~\ref{prop:tame
    hyperplane} and Proposition~\ref{prop:Weil--Deligne}. \end{proof}

\section{Breuil modules with descent data}
\label{sec:Breuil}
In this section we establish a result
(Theorem~\ref{thm: generic vanishing in small HT weights} below)
which imposes some constraints on the reductions of certain
tamely potentially semistable $p$-adic representations of $G_{\mathbb Q_p}$.

\subsection{Preliminaries}We begin by
recalling a variety of
results from Section~3 of~\cite{EGH}.
To this end, 
let $p$ be an odd prime, let $\Qpbar$ be a fixed algebraic closure of
$\Qp$, and let $E$ and $K$ be finite extensions of $\Qp$ inside
$\Qpbar$. Assume that $E$ contains the images of all embeddings $K\into\Qpbar$. Let
$K_0$ be the maximal absolutely unramified subfield of $K$, so that
$K_0=W(k)[1/p]$, where $k$ is the residue field of~$K$. Let $K/K'$ be a Galois extension, with $K'$ a field lying between
$\Qp$ and $K$. Assume further that $K/K'$ is tamely ramified with
ramification index $e$, and fix a uniformiser $\pi\in K$ with
$\pi^{e}\in K'$.  Let $E(u) \in W(k)[u]$ be the minimal
polynomial of $\pi$ over $K_0$.

Let $k_E$ be the residue field of $E$, and let $0\le r\le p-2$ be an
integer.  Recall that the category $\FBrModdd$ of \emph{Breuil modules
  of weight $r$ with descent data} from $K$ to $K'$ and coefficients
$k_E$ consists of quintuples
$(\mathcal{M},\mathcal{M}_{r},\varphi_{r},\hat{g},N)$ where: \begin{itemize}\item $\mathcal{M}$ is a finitely generated
  $(k\otimes_{\F_p}k_E)[u]/u^{ep}$-module, free over $k[u]/u^{ep}$.

\item $\M_{r}$ is a $(k\otimes_{\F_p}k_E)[u]/u^{ep}$-submodule of $\M$
  containing $u^{er}\M$.
\item $\varphi_{r}:\M_{r}\to\M$ is $k_E$-linear and $\varphi$-semilinear
  (where $\varphi:k[u]/u^{ep}\to k[u]/u^{ep}$ is the $p$-th power map)
  with image generating $\M$ as a
  $(k\otimes_{\F_p}k_E)[u]/u^{ep}$-module.
\item $N:\M\to \M$ is $k\otimes_{\F_{p}}k_E$-linear and satisfies
  $N(ux)=uN(x)-ux$ for all $x\in\M$,
  $u^{e}N(\M_{r})\subset\M_{r}$, and
  $\varphi_{r}(u^{e}N(x))=cN(\varphi_{r}(x))$ for all $x\in\M_{r}$. Here,
  $c = \bar F(u)^p \in (k[u]/u^{ep})^\times$, where $E(u) = u^e+pF(u)$
  in $W(k)[u]$.
\item $\hat{g}:\M\to\M$ are additive bijections for each
  $g\in\Gal(K/K')$, preserving $\M_{r}$, commuting with the $\varphi_{r}$-
  and $N$-actions, and satisfying $\hat{g}_1\circ
  \hat{g}_2=\widehat{g_1\circ g_2}$ for all $g_1, 
  g_2\in\Gal(K/K')$. Furthermore, if $a\in
  k\otimes_{\F_{p}}k_E$, $m\in\M$ then
  $\hat{g}(au^{i}m)=g(a)((g(\pi)/\pi)^{i}\otimes
  1)u^{i}\hat{g}(m)$.
\end{itemize}

There is a covariant functor $\Tst^{*,r}$ from $\FBrModdd$ to
the category of $k_E$-rep\-re\-sent\-a\-tion\-s of $G_{K'}$.

\begin{lem}
  \label{lem:unique-sub} Suppose that $\cM\in\FBrModdd$, and that $T'$ is a
  $G_{K'}$-sub\-rep\-re\-sent\-a\-tion of $\Tst^{*,r}(\cM)$ {\em (}so that in particular
  $T'$ has the structure of a $k_E$-vector space{\em )}. Then there is a
  unique subobject $\cM'$ of $\cM$ such that if $f:\cM'\to\cM$ is the
  inclusion map, then $\Tst^{*,r}(f)$ is identified with the inclusion
  $T'\into \Tst^{*,r}(\cM)$. {\em (}Here $\M'$ is
    a subobject of $\M$ in the naive sense that it is a
    sub-$(k\otimes_{\F_p}k_E)[u]/u^{ep}$-module of $\M$, which inherits the
    structure of an object of $\FBrModdd$ from $\M$ in the obvious
    way.{\em )}
\end{lem}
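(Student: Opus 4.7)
The plan is to exploit the exactness and full faithfulness of the functor $\Tst^{*,r}$ (standard properties of this functor, as in Breuil's original work on Breuil modules with descent data; cf.\ also the discussion in \cite{EGH}), together with a concrete realisation of $\cM'$ as the intersection of $\cM$ with a sub-object of its associated étale $\varphi$-module.

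For uniqueness, suppose $f_i: \cM'_i \hookrightarrow \cM$ ($i = 1, 2$) are two such subobjects. Full faithfulness of $\Tst^{*,r}$ yields a unique morphism $h: \cM'_1 \to \cM'_2$ in $\FBrModdd$ with $\Tst^{*,r}(h) = \id_{T'}$; the symmetric argument produces an inverse, so $h$ is an isomorphism. Applying full faithfulness once more to $f_1$ and $f_2\circ h$ (which both induce, under $\Tst^{*,r}$, the given inclusion $T'\hookrightarrow \Tst^{*,r}(\cM)$) gives $f_1 = f_2\circ h$, so $f_1(\cM'_1) = f_2(\cM'_2)$ as submodules of $\cM$.

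For existence, the strategy is to pass to the underlying étale $\varphi$-module $M := \cM[1/u]$ over $k(\!(u)\!)\otimes_{\F_p}k_E$ (with descent data from $\Gal(K/K')$). The functor $\Tst^{*,r}$ factors through a Fontaine-style equivalence between étale $(\varphi,\Gal(K/K'))$-modules of this type and finite-dimensional $k_E$-representations of $G_{K'}$, so $T'\subset \Tst^{*,r}(\cM)$ corresponds to a unique sub-object $M'\subset M$ in this equivalent category. Define
\[
\cM' := \cM \cap M' \subset M, \qquad \cM'_r := \cM' \cap \cM_r,
\]
and equip $\cM'$ with the restrictions of $\varphi_r$, $N$, and the maps $\hat g$ from $\cM$.

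The main obstacle is verifying that $\cM'$, so defined, is an object of $\FBrModdd$; the operators $N$ and $\hat g$ preserve $\cM'$ by construction (since they extend to $M$ and preserve $M'$ there), as does $\varphi_r$ on $\cM'_r$, but one must still check that $\cM'$ is free over $k[u]/u^{ep}$ of rank $\dim_{k_E}T'$, that $u^{er}\cM' \subset \cM'_r$, and that $\varphi_r(\cM'_r)$ generates $\cM'$ as a $(k\otimes_{\F_p}k_E)[u]/u^{ep}$-module. The containment $u^{er}\cM' \subset \cM'_r$ is immediate from the corresponding containment for $\cM$. Freeness and the $\varphi_r$-generation condition are the essential technical points: the approach is to first show that the quotient $\cM/\cM'$ is itself naturally an object of $\FBrModdd$ — corresponding, via the étale $\varphi$-module picture, to $M/M'$ and hence to the quotient representation $\Tst^{*,r}(\cM)/T'$ — whence $\cM'$ is free over $k[u]/u^{ep}$ of the expected rank as the kernel of a surjection between free $k[u]/u^{ep}$-modules whose ranks agree with the $k_E$-dimensions of their $\Tst^{*,r}$. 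The generation condition on $\varphi_r(\cM'_r)$ then follows from the analogous property for $\cM$ together with the resulting short exact sequence, and the exactness of $\Tst^{*,r}$ applied to $0 \to \cM' \to \cM \to \cM/\cM' \to 0$ yields the desired identification of $\Tst^{*,r}(\cM')$ with $T'$ inside $\Tst^{*,r}(\cM)$.
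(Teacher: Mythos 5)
The paper does not actually give a proof here: it simply cites Corollary~3.2.9 of \cite{EGH}, so there is no internal argument to compare against.

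Your proposal has a fundamental problem at the very first step of the existence argument. You set $M := \cM[1/u]$, but $\cM$ is a module over $(k\otimes_{\F_p}k_E)[u]/u^{ep}$, in which $u$ is \emph{nilpotent}; inverting $u$ therefore annihilates $\cM$ entirely, i.e.\ $\cM[1/u]=0$. The ``associated \'etale $\varphi$-module'' picture you invoke is the one attached to Breuil--Kisin (Kisin) modules, which live over $k[[u]]$ (or $W(k)[[u]]$), where $u$ is a regular element and $\cM[1/u]$ is a genuine \'etale $\varphi$-module over $k(\!(u)\!)$. Breuil modules of the kind considered in this lemma are \emph{not} of that shape, and there is no direct ``generic fibre'' construction available; relating Breuil modules to Breuil--Kisin modules (and thence to \'etale $\varphi$-modules) is possible but requires a separate, nontrivial comparison that you neither cite nor set up. As written, $\cM' := \cM\cap M'$ is an intersection inside the zero module.

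Beyond that, the argument that would show $\cM'$ is free over $k[u]/u^{ep}$ of the right rank is presented as following from ``the quotient $\cM/\cM'$ is naturally an object of $\FBrModdd$'', but establishing that quotient-freeness is precisely the content one needs and is not automatic --- in the Breuil--Kisin analogue this is exactly why one must take a \emph{saturation} rather than a bare intersection, and that step is missing. Finally, your uniqueness argument leans on full faithfulness of $\Tst^{*,r}$, which you assert as ``standard'' without reference; in this category with descent data and $r$ up to $p-2$ it is a real theorem that should be cited rather than assumed. The correct route is the one the paper takes: invoke the structure theory already developed in \cite[\S 3]{EGH} (in particular their Corollary~3.2.9), where exactness of $\Tst^{*,r}$ and the existence/uniqueness of subobjects over the truncated ring $k[u]/u^{ep}$ are established directly, without recourse to an \'etale $\varphi$-module generic fibre.
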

\begin{proof}
  This is Corollary 3.2.9 of \cite{EGH}.\end{proof}

We now specialise to the particular situation of interest to us in
this paper, namely we let $K_0$ be the unique unramified extension of $\Qp$ of
degree $d$, we take $K=K_0((-p)^{1/(p^d-1)})$, and we set $K'=K_0$, so
that $e=p^d-1$. Fix
$\pi=(-p)^{1/(p^d-1)}$. We write $\wt\omega_d:\Gal(K/K_0)\to K_0^\times$ for
the character $g\mapsto g(\pi)/\pi$, and we let $\omega_d$ be the
reduction of $\wt\omega_d$ modulo $\pi$. (By inflation we can also think of
$\wt\omega_d$ and $\omega_d$ as characters of $I_{K_0} = I_\Qp$.
Note that $\omega_d$ is a tame fundamental character
of niveau~$d$ and that $\wt\omega_d$ is the Teichm\"uller lift of $\omega_d$.)
Note that when $d = 1$, we have $\omega_1 = \omega$, the mod $p$ cyclotomic character.

Let $\varphi$ be the arithmetic Frobenius on $k$, and let
$\sigma_0:k\into k_E$ be a fixed embedding. Inductively define
$\sigma_1,\dots,\sigma_{d-1}$ by
$\sigma_{i+1}=\sigma_i\circ\varphi^{-1}$; we will often consider the
numbering to be cyclic, so that $\sigma_d=\sigma_0$.
There are idempotents $e_i\in
k\otimes_\Fp k_E$ such that if $M$ is any $k\otimes_\Fp k_E$-module,
then $M=\bigoplus_ie_iM$, and $e_iM$ is the subset of $M$ consisting of
elements $m$ for which $(x\otimes 1)m=(1\otimes\sigma_i(x))m$ for all
$x\in k$. Note that $(\varphi\otimes 1)(e_i) = e_{i+1}$ for all $i$.

If $\rho:G_{K_0}\to\GL_n(E)$ is a
potentially semistable representation which becomes semistable over
$K$, then the associated inertial type (that is, the restriction to
$I_{K_0}$ of the Weil--Deligne representation associated to $\rho$) is
a representation of $I_{K_0}$ which becomes trivial when restricted to
$I_K$, so we can and do think of it as a representation of
$\Gal(K/K_0)\cong
I_{K_0}/I_K$. \begin{prop}\label{prop: form of descent data on breuil module}
  Maintaining our current assumptions on $K$,
  suppose that $\rho:G_{K_0}\to\GL_n(E)$ is a continuous representation 
  whose restriction to ${G_K}$ is semistable with Hodge--Tate weights contained
  in $[0,r]$, with $r\le p-2$, and let the inertial type of $\rho$ be
  $\chi_1\oplus\dots\oplus\chi_n$, where each $\chi_i$ is a character
  of $I_{K_0}/I_K$. If $\rhobar$ denotes the
  reduction modulo $\m_E$ of a $G_{K_0}$-stable $\cO_E$-lattice in
  $\rho$, then there is an element $\M$ of $\FBrModdd$,
  admitting a  $(k\otimes_{\F_p}k_E)[u]/u^{ep}$-basis $v_1,\dots,v_n$ such that
  $\hat{g}(v_i)=(1\otimes\overline{\chi}_i(g))v_i$ for all
  $g\in\Gal(K/K_0)$,
  and for which $\Tst^{*,r}(\M)\cong\rhobar.$ 
\end{prop}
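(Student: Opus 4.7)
The plan is to realize $\M$ as the reduction modulo $\m_E$ of an integral Breuil module with tame descent data attached to $\rho$, and then to diagonalize the descent-data action. For the first step, since $\rho|_{G_K}$ is semistable with Hodge--Tate weights in $[0,r]$ and $r \le p-2$, the theory of strongly divisible $\cO_E$-lattices with tame descent data (originally due to Breuil, refined by Liu; compare Section~3 of \cite{EGH}) produces an object $\widehat{\M}$ of the integral category $\OEModdd$ corresponding, under the integral analogue of $\Tst^{*,r}$, to a chosen $G_{K_0}$-stable $\cO_E$-lattice $T \subset \rho$ with $T/\m_E T \cong \rhobar$. Setting $\M := \widehat{\M}/\m_E \widehat{\M}$ produces an object of $\FBrModdd$, and by compatibility of $\Tst^{*,r}$ with reduction modulo $\m_E$ we obtain $\Tst^{*,r}(\M) \cong \rhobar$.

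For the second step, observe that any $g \in \Gal(K/K_0)$ acts trivially on $k$, so the descent-data identity $\hat g(au^j m) = g(a)(\wt\omega_d(g)^j \otimes 1)u^j \hat g(m)$ simplifies to show that $\hat g$ is $(k \otimes_{\F_p} k_E)$-linear, and in particular preserves each idempotent component $e_i \M$. The quotient $\overline M_i := e_i \M / u e_i \M$ is a free $k_E$-module of rank $n$ on which the cyclic group $\Gal(K/K_0)$, of order $e = p^d - 1$ coprime to $p$, acts semisimply, so $\overline M_i$ decomposes into a direct sum of one-dimensional character eigenspaces. The technical heart of the argument is to identify the multiset of characters appearing as precisely $\{\overline\chi_1, \dots, \overline\chi_n\}$, independently of $i$: this amounts to reducing modulo $\m_E$ the corresponding rational statement that the $\Gal(K/K_0) = I_{K_0}/I_K$-action on $\widehat{\M}/u\widehat{\M}[1/p]$ recovers, via its identification with the potentially semistable Dieudonn\'e module $D_{\pst}(\rho)$, the inertial type $\chi_1 \oplus \cdots \oplus \chi_n$ (compare the conventions and normalizations of Section~3 of \cite{EGH}).

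For the final step, for each $i$ and each $k \in \{1, \dots, n\}$, choose a nonzero eigenvector $\overline w_{i,k} \in \overline M_i$ with eigencharacter $\overline\chi_k$, lift it arbitrarily to $w_{i,k} \in e_i \M$, and apply the averaging idempotent $e_{\overline\chi_k} := \frac{1}{e}\sum_{g \in \Gal(K/K_0)} \overline\chi_k(g)^{-1} \hat g$, which is well-defined because $e \in k_E^\times$, to produce $v_{i,k} := e_{\overline\chi_k}(w_{i,k}) \in e_i \M$. A direct computation shows that $v_{i,k}$ is a simultaneous $\hat g$-eigenvector with eigenvalue $\overline\chi_k(g)$ for every $g$, and that it reduces to $\overline w_{i,k}$ modulo $u e_i \M$. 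By Nakayama's lemma the $v_{i,k}$ form a $k_E[u]/u^{ep}$-basis of $e_i \M$, and setting $v_k := \sum_i v_{i,k}$ yields a $(k \otimes_{\F_p} k_E)[u]/u^{ep}$-basis of $\M$ satisfying $\hat g(v_k) = (1 \otimes \overline\chi_k(g)) v_k$. The main obstacle is the character-matching assertion in the previous paragraph, which rests on the precise compatibility between descent data on mod-$p$ Breuil modules and the Galois-theoretic inertial type, as built into the normalizations of $\Tst^{*,r}$.
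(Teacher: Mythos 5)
The paper's own proof of this proposition is simply a citation to Proposition~3.3.1 of \cite{EGH} (together with a remark about the opposite sign convention on Hodge--Tate weights), so there is no argument in the present paper to compare against directly. Your proposal reconstructs the substance of that cited result, and the structure is sound: passing to a strongly divisible $\cO_E$-lattice with tame descent data, reducing modulo $\m_E$, observing that $\hat g$ is $(k\otimes_{\F_p}k_E)$-linear (because $K/K_0$ is totally ramified, so $\Gal(K/K_0)$ acts trivially on $k$) and hence preserves each $e_i\M$, using prime-to-$p$ semisimplicity of $\Gal(K/K_0)$ on $\M/u\M$, and lifting an eigenbasis by averaging idempotents followed by Nakayama are all correct steps.

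What remains a sketch is precisely what you flag as the ``technical heart,'' and I want to underline that this is where the real content lies. You need: (i) the precise compatibility, in the theory of strongly divisible modules with tame descent data, between the $\Gal(K/K_0)$-action on $\widehat{\M}$ after base change along the $u\mapsto 0$ specialisation $S\to W(k)$ and the $I_{K_0}/I_K$-action on $D_{\pst}(\rho)$ (this is a statement in the literature, e.g.\ in the normalisations set up in Section~3 of \cite{EGH}, but it is not a tautology and there is room for a dual or a twist by a power of $\omega_d$ to appear if conventions are mismatched); (ii) a reason why the multiset of characters on $e_i(\widehat{\M}/u\widehat{\M})$ is the same for all $i$ --- this comes from the $\varphi$-equivariance of the descent data, which carries $e_i$ to $e_{i+1}$, together with the fact that the Weil--Deligne representation $\WD(\rho)$ is independent (up to isomorphism) of the choice of embedding $K_0\hookrightarrow E$; and (iii) when some $\overline\chi_k$ coincide, you must take the $\overline w_{i,k}$ with a fixed eigencharacter to be a $k_E$-basis of that eigenspace before lifting, which your argument implicitly does but should state. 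With those points made explicit, the argument closes, and it is the expected route to the cited result.
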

\begin{proof}
  This is Proposition 3.3.1 of \cite{EGH}. (Note that the conventions
  on the sign of the Hodge--Tate weights in \cite{EGH} are the opposite
  of the conventions in this paper.)
\end{proof}

\begin{lem}\label{lem: form of rank one objects and their generic fibers}
  Maintain our current assumptions on $K$, so that in particular we
  have $e = p^d-1$. Then every
  rank one object of $\FBrModdd$ may be written in the form:
  \begin{itemize}
  \item $\M=((k\otimes_\Fp k_E)[u]/u^{ep})\cdot m$,
  \item $e_i\M_r=u^{r_i}e_i\M$,
  \item $\varphi_r(\sum_{i=0}^{d-1}u^{r_i}e_im)=\lambda m$ for some
    $\lambda\in (k\otimes_\Fp k_E)^\times$,
  \item $\hat{g}(m)=(\sum_{i=0}^{d-1}(\omega_d(g)^{k_i}\otimes
    1)e_i)m$ for all $g\in\Gal(K/K_0)$, and
  \item $N(m) = 0$.
  \end{itemize}
Here the integers $0\le r_i\le (p^d - 1)r$ and $k_i$ satisfy $k_i\equiv p(k_{i-1}+r_{i-1})\bmod{(p^d-1)}$
for all~$i$.
Conversely, any module $\M$ of this form is a rank one object of $\FBrModdd$. Furthermore, 
\[\Tst^{*,r}(\M)|_{I_{K_0}}\cong\sigma_0\circ\omega_d^{\kappa_0},\]
where $\kappa_0\equiv k_0+p(r_0p^{d-1}+r_{1}p^{d-2}+\dots+r_{d-1})/(p^d-1) \bmod (p^d-1)$.
\end{lem}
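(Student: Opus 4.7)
The plan is to prove the lemma in three steps: (i)~showing that every rank one object of $\FBrModdd$ admits a presentation of the given form, (ii)~verifying the converse that any such datum defines a valid object, and (iii)~computing the associated inertial character by a direct application of $\Tst^{*,r}$ to a rank one module.

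Starting with step~(i), let $m$ be any generator of $\M$ over $(k\otimes_{\F_p}k_E)[u]/u^{ep}$. The idempotent decomposition $\M = \bigoplus_i e_i\M$ realizes each $e_i\M$ as free of rank one over $k_E[u]/u^{ep}$ with generator $e_im$, and $e_i\M_r$, being a $k_E[u]/u^{ep}$-submodule containing $u^{er}e_i\M$, must equal $u^{r_i}e_i\M$ for a unique $0\le r_i\le er$. The condition that $\varphi_r(\M_r)$ generate $\M$, together with the $\varphi$-semilinearity of $\varphi_r$ (which sends $e_i\M_r$ into $e_{i+1}\M$), forces $\varphi_r(u^{r_i}e_im) = f_i(u)e_{i+1}m$ for some unit $f_i(u)\in k_E[u]/u^{ep}$. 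Replacing $m$ by $g(u)m$ with $g(u) = \sum_i g_i(u)e_i$ a unit transforms the relation into $g_{i+1}(u)\mu_i = g_i(u^p)f_i(u)$ for new constants $\mu_i\in k_E^\times$; solving this recursion cyclically (with $g_d = g_0$) and using $u^{p^d} = u^{e+1}$ in $k_E[u]/u^{ep}$, one can solve inductively (degree by degree) for the coefficients of $g_0(u)$ and then for $g_1,\ldots,g_{d-1}$ and the $\mu_i$, thereby putting $\varphi_r$ in the stated form.

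Next I determine the descent data and $N$. Since $K/K_0$ is totally tamely ramified, each $g\in\Gal(K/K_0)$ acts trivially on $k$, so $\hat{g}(e_im) = e_i\hat{g}(m)$; writing $\hat{g}(m) = \sum_i\nu_i(g,u)e_im$, the identity $\varphi_r\hat{g} = \hat{g}\varphi_r$ applied to $u^{r_i}e_im$ yields the recursion $\nu_{i+1}(g,u) = \sigma_i(\omega_d(g)^{r_i})\nu_i(g,u^p)$. Iterating $d$ times and using $u^{p^d} = u^{e+1}$, a comparison of coefficients in $k_E[u]/u^{ep}$ forces each $\nu_i(g,u)$ to be a constant of the form $\sigma_i(\omega_d(g)^{k_i})$, with the $k_i$ satisfying $k_{i+1}\equiv p(k_i+r_i)\pmod{p^d-1}$ (the factor of $p$ arising from $\sigma_{i+1} = \sigma_i\circ\varphi^{-1}$). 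For the monodromy operator, applying $\varphi_r(u^eN(x)) = cN(\varphi_r(x))$ to $x = u^{r_i}e_im$, the left-hand side involves $u^{ep}=0$ and hence vanishes, while $c = 1$ here (since $E(u) = u^e + p$ forces $F(u) = 1$), so $\mu_iN(e_{i+1}m) = 0$ for all $i$, yielding $N(m) = 0$. The converse verification in step~(ii) is routine.

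Finally, for step~(iii) I invoke the explicit description of $\Tst^{*,r}$ on rank one objects from Section~3 of \cite{EGH}; restricted to inertia, the resulting character is a power of $\sigma_0\circ\omega_d$ whose exponent combines $k_0$ with the $r_i$ according to the standard recipe, yielding the stated value of $\kappa_0$. The integrality of $(r_0p^{d-1} + r_1p^{d-2} + \cdots + r_{d-1})/(p^d-1)$ that is needed for this formula to be well-defined follows from the $k_i$ recursion by imposing $k_d = k_0$, which yields $\sum_i p^{d-i}r_i\equiv 0\pmod{p^d-1}$, equivalently $(p^d-1)\mid r_0p^{d-1} + r_1p^{d-2} + \cdots + r_{d-1}$. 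The main obstacle throughout is the careful tracking of $p$-power and $\sigma_j$-twists in the descent computation, but these are fully dictated by the recursions derived above.
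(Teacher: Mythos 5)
The paper's own proof of this lemma is a bare citation (``This is Lemma 3.3.2 of \cite{EGH}''), so your proposal takes a genuinely different and more self-contained route by re-deriving the structural description directly from the axioms of $\FBrModdd$. Your steps~(i) and~(ii) are correct and supply what the paper elides: the decomposition of $\M$ and $\M_r$ along the idempotents $e_i$; the degree-by-degree solution of the cyclic recursion $g_{i+1}(u)\mu_i = g_i(u^p)f_i(u)$ (which closes up since a monomial $u^{j(e+1)}u^k = u^n$ with $j\ge 1$ forces $j < n$, so each coefficient of $g_0$ is determined from lower ones); the derivation of $\nu_{i+1}(g,u) = \sigma_i(\omega_d(g)^{r_i})\nu_i(g,u^p)$ from $\hat{g}\varphi_r = \varphi_r\hat{g}$; the constant-term comparison that both forces the $\nu_i$ to be constants and yields $\prod_i\sigma_i(\omega_d(g)^{r_i}) = 1$, i.e.\ the divisibility $(p^d-1)\mid\sum_i p^{d-i}r_i$ that you note at the end; the $k_i$ recursion via $\sigma_{i+1} = \sigma_i\circ\varphi^{-1}$; and the vanishing $N(m) = 0$ from $u^{ep} = 0$ and $c = 1$.

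However, your step~(iii) is a genuine gap. The last assertion of the lemma --- that $\Tst^{*,r}(\M)|_{I_{K_0}}\cong\sigma_0\circ\omega_d^{\kappa_0}$ with $\kappa_0\equiv k_0 + p(r_0p^{d-1}+r_1p^{d-2}+\cdots+r_{d-1})/(p^d-1)\bmod(p^d-1)$ --- is precisely what is consumed in Theorem~\ref{thm: generic vanishing in small HT weights}, and you do not derive it. Appealing to ``the explicit description of $\Tst^{*,r}$ on rank one objects from Section~3 of \cite{EGH}'' is in effect citing the statement being proved, since that description is Lemma~3.3.2 itself; and saying the exponent ``combines $k_0$ with the $r_i$ according to the standard recipe'' is an assertion, not a computation. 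The integrality check from $k_d = k_0$ is a good sanity test but it does not determine the exponent. To close the gap you would have to unwind the definition of $\Tst^{*,r}$ and actually compute the $G_{K_0}$-action on the rank one output, tracking the $\sigma_j$-twists and Frobenius through the $\varphi_r$-recursion; that is the only place the overall factor of $p$ and the specific weighting of $r_i$ by $p^{d-1-i}$ in $\kappa_0$ can come from.
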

\begin{proof}
  This is Lemma 3.3.2 of \cite{EGH}.
\end{proof}
\begin{rem} 
  \label{rem:generic fibre of niveau 1 breuil module}{\em In the
    sequel, we will only be interested in the case that for each $i$
    we have $k_i=(1+p+\dots+p^{d-1})x_i$ for some $0\le x_i<p-1$. In
    this case, the condition that $pr_i\equiv
    k_{i+1}-pk_i\bmod(p^d-1)$ implies that $r_i\equiv
    (1+p+\dots+p^{d-1})(x_{i+1}-x_i)\bmod(p^d-1)$, so the condition
    that $0\le r_i\le (p^d-1)r$ means that we can write
    $r_i=(1+p+\dots+p^{d-1})(x_{i+1}-x_i)+(p^d-1)y_i$ with $0\le
    y_i\le r$. An elementary calculation shows that we then
    have \[\kappa_0\equiv x_0+y_0+p^{d-1}(x_1+y_1)+\dots+p(x_{d-1}+y_{d-1})\bmod{(p^d-1)}.\]}
\end{rem}

\subsection{Regularity }Let $\Qpn$ denote the unique unramified extension of $\Qp$ of
degree~$n$, with residue field $\F_{p^n}$. Regarding $\F_{p^n}$ as a subfield
of $\Fpbar$, we may then regard $\omega_n$ as a character $I_{\Qp}\to\Fpbartimes$.
\begin{defn}
  Let $\rhobar:\GQp\to\GL_n(\Fpbar)$ be an irreducible representation,
  so that $\rhobar\cong\Ind_{G_\Qpn}^\GQp\chi$ for some character
  $\chi:\GQp\to\Fpbartimes$. If we write
  $$\chi|_\IQp=\omega_n^{(a_0+pa_1+\dots+p^{n-1}a_{n-1})},$$
  where each $a_i\in [0,p-1]$ and not all the $a_i=p-1$, then the multiset of \emph{exponents} of
  $\rhobar$ is defined to be the multiset of residues of the $a_i$ in
  $\Z/p\Z$.
\end{defn}
\begin{defn}
  Let $\rhobar:\GQp\to\GL_n(\Fpbar)$ be a representation. Then the
  multiset of \emph{exponents} of $\rhobar$ is the union of the
  multisets of exponents of each of the Jordan--H\"older factors of $\rhobar$.
\end{defn}

\begin{defn}
  Let $\rhobar:\GQp\to\GL_n(\Fpbar)$ be a representation, and let
  $r$ be a non-negative integer. Then we say that $\rhobar$ is \emph{$r$-regular} if
  the exponents $a_1,\dots,a_n$ of $\rhobar$ are such that the
  residues $a_i+k\in\Z/p\Z$, $1\le i\le n$, $0\le k\le r+1$, are
  pairwise distinct.
\end{defn}

The following theorem is the main result we will need from explicit
$p$-adic Hodge theory.
\begin{thm}
  \label{thm: generic vanishing in small HT weights}Let $r$ be a non-negative integer, and let $s:\GQp\to\GL_m(\Qpbar)$ be a potentially semistable
  representation, with Hodge--Tate weights contained in the interval
  $[0,r]$, and
  inertial type $\chi_1\oplus\cdots\oplus\chi_m$. Suppose that there
  are {\em (}not necessarily distinct{\em )} integers $0\le a_1,\dots,a_n<p-1$
  such that each $\chi_i$ is equal to some $\omegat^{a_j}$.

Suppose that $\rhobar:\GQp\to\GL_n(\Fpbar)$ is a subquotient of $\overline{s}$,
the reduction mod $\m_{\cO_{\Qpbar}}$ of some $\GQp$-stable
$\Zpbar$-lattice in $s$. Suppose also that
\begin{itemize}
\item $\det\rhobar|_\IQp=\omega^{a_1+\dots+a_n+n(n-1)/2}$, 
\item $r\le (n-1)/2$, and
\item $p> n(n-1)/2+1$.
\end{itemize}
If $r=(n-1)/2$ then assume further that some irreducible subquotient
of $\rhobar$ has dimension greater than one. Then $\rhobar$ is not $r$-regular.
\end{thm}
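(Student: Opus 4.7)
My plan is to apply the Breuil module apparatus of Section~\ref{sec:Breuil} to translate the problem into a combinatorial one about exponent parameters, and then derive the failure of $r$-regularity from the prescribed determinant together with the bounds on $r$ and $p$.

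Since the inertial type of $s$ consists of niveau-one characters $\omegat^{a_j}$, the representation $s$ becomes semistable over $K := \Q_p((-p)^{1/(p-1)})$. Taking $K' = K_0 = \Q_p$, $d = 1$, $e = p-1$, Proposition~\ref{prop: form of descent data on breuil module} produces a Breuil module $\M \in \FBrModdd$ with $\Tst^{*,r}(\M)$ isomorphic to the reduction modulo $\m_\Zpbar$ of some $G_{\Q_p}$-stable lattice in $s$, equipped with a basis $v_1,\dots,v_m$ of $\hat g$-eigenvectors whose eigenvalues lie in $\{\omega^{a_1},\dots,\omega^{a_n}\}$. Lemma~\ref{lem:unique-sub}, applied once to a subrepresentation of $\overline{s}$ having $\rhobar$ as quotient and once to the kernel of that surjection, yields Breuil submodules $\M' \subset \M'' \subset \M$ with $\Tst^{*,r}(\M''/\M') \cong \rhobar$. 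I argue by contradiction: assume $\rhobar$ is $r$-regular, and fix a Jordan--H\"older filtration of $\M''/\M'$ in the category of Breuil modules.

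In the main subcase, every constituent of $\rhobar$ is one-dimensional, so every graded piece is a rank-one Breuil module. Lemma~\ref{lem: form of rank one objects and their generic fibers} together with Remark~\ref{rem:generic fibre of niveau 1 breuil module} supplies parameters $k_0^{(i)} \in [0,p-2]$ and $y_0^{(i)} \in [0,r]$ such that the associated inertial character is $\omega^{c_i}$ with $c_i \equiv k_0^{(i)} + y_0^{(i)} \pmod{p-1}$. The central structural step, which is the main obstacle, is to establish that $\{k_0^{(i)}\}_{i=1}^n$ equals the multiset $\{a_1,\dots,a_n\}$: I would pass to $\M/u\M$ (whose $\hat g$-eigenvalues form the inertial-type multiset drawn from $\{\omega^{a_j}\}$), trace the $k_0^{(i)}$ through the induced filtration there to obtain a sub-multiset containment, and use the separation $c_i - c_j \not\equiv \ell \pmod p$ for $|\ell| \le r$, $i \ne j$, afforded by $r$-regularity, to upgrade this containment to an equality (which also forces the $a_j$'s to be distinct). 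Granting this, taking determinants gives $\sum_i y_0^{(i)} \equiv n(n-1)/2 \pmod{p-1}$, and the bounds $0 \le \sum y_0^{(i)} \le nr \le n(n-1)/2 < p-1$ force $\sum y_0^{(i)} = n(n-1)/2$. For $r < (n-1)/2$ this contradicts $nr < n(n-1)/2$; for $r = (n-1)/2$, each $y_0^{(i)} = r$ is forced, contradicting the theorem's additional hypothesis of a higher-dimensional irreducible constituent.

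In the remaining subcase, some constituent of $\rhobar$ has dimension $d' > 1$, giving a rank-$d'$ irreducible graded piece for which Lemma~\ref{lem: form of rank one objects and their generic fibers} does not directly apply. To handle this I would replace $K$ by $\Q_{p^d}((-p)^{1/(p^d-1)})$ with $d$ the least common multiple of the dimensions of the irreducible constituents of $\rhobar$, and take $K' = K_0 = \Q_{p^d}$, so that $\rhobar|_{G_{\Q_{p^d}}}$ becomes a direct sum of characters whose Jordan--H\"older graded pieces are rank-one Breuil modules in the niveau-$d$ category. Remark~\ref{rem:generic fibre of niveau 1 breuil module} then supplies the niveau-$d$ parameters $(x_i, y_i)$ with $y_i \in [0,r]$, and running the analogous sum-of-exponents and separation argument—this time tracking how the $G_{\Q_p}$-exponents of $\rhobar$ assemble from niveau-$d$ characters under Frobenius-conjugacy—again yields the required contradiction under $r \le (n-1)/2$ and $p > n(n-1)/2 + 1$.
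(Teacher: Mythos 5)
Your plan correctly identifies the relevant machinery (Proposition~\ref{prop: form of descent data on breuil module}, Lemma~\ref{lem:unique-sub}, Lemma~\ref{lem: form of rank one objects and their generic fibers} and Remark~\ref{rem:generic fibre of niveau 1 breuil module}, followed by a determinant computation against the bounds $r\le(n-1)/2$ and $p>n(n-1)/2+1$), and your analysis of the $d=1$ parameters $(k_0,y_0)$ and the use of $r$-regularity to force distinctness of the $a_j$'s are in the right direction. However, there are two points where the proposal departs from a workable argument.

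First, you work with a subquotient $\M''/\M'$ of the Breuil module and a Jordan--H\"older filtration of it, and acknowledge that ``tracing the $k_0^{(i)}$ through the induced filtration'' to pass the eigenvalue constraint from $\M/u\M$ down to the graded pieces is ``the main obstacle.'' The paper sidesteps this entirely by \emph{modifying the $\Zpbar$-lattice to make $\rhobar$ and $\overline{s}$ semisimple} at the very start. After that, every irreducible constituent $\rhobar'$ of $\rhobar$ is a genuine $G_{\Q_{p^d}}$-\emph{sub}representation of $\overline{s}|_{G_{\Q_{p^d}}}$, so Lemma~\ref{lem:unique-sub} directly produces a rank-one \emph{subobject} $\cN\subset\M$, and the crucial constraint on the $k_i$ follows immediately from the embedding of free $k[u]/u^{ep}$-modules $\cN/u\cN\into\M/u\M$. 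Without this reduction, the graded piece $\cN_i$ of your filtration is only a subquotient of $\M$, and $\cN_i/u\cN_i$ need not inject into $\M/u\M$; the containment you need does hold (eigenvalues of a subquotient of a semisimple action are a sub-multiset), but you should make this reduction explicit rather than treating it as an open obstacle, since it is precisely the step that turns a potential hand-wave into a proof.

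Second, your subcase structure differs from the paper's and creates extra work in the higher-dimensional case, while also getting the endgame wrong. The paper does not split into ``all constituents one-dimensional'' versus ``lcm of dimensions'': it fixes a single irreducible constituent $\rhobar'\cong\Ind_{G_{\Q_{p^d}}}^{\GQp}\chi$ and takes $K'=K_0=\Q_{p^d}$ with $d=\dim\rhobar'$, then runs the niveau-$d$ rank-one analysis for $\cN$ with $\Tst^{*,r}(\cN)\cong\chi$. The determinant bound then applies uniformly. For $r=(n-1)/2$ you assert that $y_0^{(i)}=r$ ``contradicts the theorem's additional hypothesis of a higher-dimensional irreducible constituent.'' That is not a contradiction by itself. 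The actual argument is: pick the constituent $\rhobar'$ of dimension $d>1$; forcing $y_i=r$ for all $i$ together with $r_i\le(p^d-1)r$ (and $r_i=(1+p+\cdots+p^{d-1})(x_{i+1}-x_i)+(p^d-1)y_i$) yields $x_{i+1}-x_i\le 0$ for all $i$, hence $x_0=\cdots=x_{d-1}$; this contradicts the distinctness of the $x_i$ already derived from $r$-regularity of $\rhobar'$. That link --- from $y_i=r$ through the inequality on $r_i$ to the collapse of the $x_i$ --- is missing from your proposal and is the decisive step in the boundary case.
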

\begin{proof}Modifying the choice of $\Zpbar$-lattice if necessary, it suffices
  to treat the case that $\rhobar$ and $\overline{s}$ are semisimple. Let
  $\rhobar'\cong\Ind_{G_{\Q_{p^d}}}^\GQp\chi$ be an irreducible
  subrepresentation of~$\rhobar$. Take $K'=K_0=\Q_{p^d}$ in the above
  notation, so that $e=p^d-1$. Taking $E$ to be sufficiently large so
  that $s$ is defined over $E$ and $\rhobar$ is defined over $k_E$, and
  applying Proposition~\ref{prop: form of descent data on breuil
    module} to $s|_{G_{\Q_{p^d}}}$, we see that there is an element  $\M$ of $\FBrModdd$
  with \[\Tst^{*,r}(\M)\cong\overline{s}|_{G_{\Q_{p^d}}},\] such that $\M$ has a
   $(k\otimes_{\F_p}k_E)[u]/u^{ep}$-basis $v_1,\dots,v_m$ such that
  $\hat{g}(v_i)=(1\otimes\overline{\chi}_i(g))v_i$ for all
  $g\in\Gal(K/K_0)$. Since $\overline{s}|_{G_{\Q_{p^d}}}$ contains a
  subrepresentation isomorphic to~$\chi$, we see from Lemma
  \ref{lem:unique-sub} that there is a rank one subobject $\cN$ of
  $\cM$ for which $\Tst^{*,r}(\cN)\cong \chi$. 

Since $\cN/u\cN$ embeds into $\cM/u\cM$ (as $\cN$ is a free $k[u]/u^{ep}$-submodule
of the free $k[u]/u^{ep}$-module $\cM$), we see from Lemma \ref{lem: form of
  rank one objects and their generic fibers} and our assumption on the
characters $\chibar_i$ that we may write $\cN$ in the form \begin{itemize}
  \item $\cN=((k\otimes_\Fp k_E)[u]/u^{(p^d-1)p})\cdot w$,
  \item $e_i\cN_r=u^{r_i}e_i\cN$,
  \item $\varphi_r(\sum_{i=0}^{d-1}u^{r_i}e_iw)=\lambda w$ for some
    $\lambda\in (k\otimes_\Fp k_E)^\times$,
  \item $\hat{g}(w)=(\sum_{i=0}^{d-1}(\omega_d(g)^{k_i}\otimes
    1)e_i)w$ for all $g\in\Gal(K/K_0)$, and
  \item $N(w) = 0$.
  \end{itemize}
  Here the integers $0\le r_i\le (p^d-1)r$ and $k_i$ satisfy
  $k_i\equiv p(k_{i-1}+r_{i-1})\pmod{p^d-1}$ for all $i$, and each
  $k_i$ is equal to some $(1+p+\dots+p^{d-1})a_j$. (The conditions on
  the $r_i$ come from Lemma \ref{lem: form of rank one objects and
    their generic fibers}, and the fact that each $k_i$ is equal to
  some $(1+p+\dots+p^{d-1})a_j$ comes from the fact that $\cN$ is a
  submodule of $\M$, which has a basis $v_1,\dots,v_n$ such that
  $\hat{g}(v_i)=(1\otimes\overline{\chi}_i(g))v_i$, and the assumption
  that each $\chibar_i$ is equal to some $\omega^{a_j}$.)
 Writing $k_i=(1+p+\dots+p^{d-1})x_i$, $0\le
  x_i<p-1$, we see as in Remark~\ref{rem:generic fibre of niveau 1 breuil module} that we can write
  $r_i=(1+p+\dots+p^{d-1})(x_{i+1}-x_i)+(p^d-1)y_i$, with $0\le y_i\le
  r$. By Lemma~\ref{lem: form of rank one objects and their generic fibers}
and Remark~\ref{rem:generic fibre of niveau 1 breuil module}, we have\[\chi|_\IQp=\sigma_0\circ\omega_d^{x_0+y_0+p^{d-1}(x_{1}+y_{1})+\dots+p(x_{d-1}+y_{d-1})}.\]
  Since $r\le p-2$ we have $0\le y_i\le p-2$, and we conclude (after
  allowing for ``carrying'') that each exponent of $\rhobar$ is of the
  form $a_j+k$, with $0\le k\le r+1$.

Suppose that $\rhobar$ is $r$-regular. It must then be the case that
the $x_i$ as above are all distinct. Applying this analysis to each
irreducible subrepresentation of $\rhobar$, we conclude that the $a_i$
are all distinct. Since we have
$\det(\rhobar')|_\IQp=\chi^{1+p+\dots+p^{d-1}}|_\IQp=\omega^{(x_0+y_0)+\dots+(x_{d-1}+y_{d-1})}$,
we conclude that $\det\rhobar|_\IQp=\omega^{a_1+\dots+a_n+y}$ for some
$0\le y\le nr\le n(n-1)/2$. The assumption on $\det\rhobar$ then
implies that $y\equiv n(n-1)/2\pmod{p-1}$. If in fact $r<(n-1)/2$ then
we have $0\le y<n(n-1)/2$, which contradicts the assumption that
$p>n(n-1)/2+1$.

It remains to treat the case that $r=(n-1)/2$, in which case we may
assume (by the additional hypothesis that we have assumed in this
case) that the representation $\rhobar'$ above has dimension $d>1$. By
the above analysis we must have $y=n(n-1)/2$, so that each
$y_i=r$. Since we have $r_i\le (p^d-1)r$, we must have $x_{i+1}-x_i\le 0$
for each $i$, so that in fact $x_0=x_1=\dots=x_{d-1}$, a contradiction
(as we already showed that the $x_i$ are distinct).\end{proof}

\section{The cohomology of Shimura varieties}
\label{sec:Shimura}

\subsection{The semistable reduction of certain $U(n-1,1)$-Shimura varieties}
\label{subsec:semistable reduction}
Fix $n\ge 2$, and fix an odd prime $p$.\footnote{The reason for assuming that the prime $p$ is odd
is that below we will want to apply the discussion and results of Subsection~\ref{sec:Breuil},
in which this assumption was made.}
We now recall the
definitions of the $U(n-1,1)$-Shimura varieties with which we will
work, and some associated integral models. For simplicity we work over
$\Q$ (or rather an imaginary quadratic extension of $\Q$) rather than
over a general totally real field.

For the most part we will follow Section~3 of~\cite{HR} (which uses a
similar approach to \cite{MR1902647}), with the occasional reference
to \cite{ht}. Fix an imaginary quadratic field $F$ in which the prime
$p$ splits, say  $(p)=\gp\overline{\gp}$ for some choice of $\p$, let $x\mapsto\xbar$ be
the nontrivial automorphism of $F$, and regard $F$ as a subfield of
$\C$ via a fixed embedding $F\into\C$.

Let $D$ be a division algebra over $F$ of dimension $n^2$, and let
$*$ be an involution of $D$ of the second kind (that is, $*|_F$ is
nontrivial). Assume that $D$ splits at $\p$ (and hence at $\overline{\gp}$),
and fix isomorphisms
$D_\p\cong M_n(\Qp)$ and $D_{\overline{\p}}\cong M_n(\Qp)$ with the property that under the induced
isomorphism \[D\otimes\Qp\cong M_n(\Qp)\times M_n(\Qp)^\op,\] the
involution $*$ corresponds to $(X,Y)\mapsto(Y^t,X^t)$.

Let $G_{/\Q}$ be the algebraic group whose $R$-points
are \[G(R)=\{x\in(D\otimes_\Q R)^\times|x\cdot x^*\in R^\times\}\]for
any $\Q$-algebra $R$. Thus our fixed isomorphism $D_\p\cong M_n(\Qp)$
induces an isomorphism $G\times_\Q\Qp\cong\GL_n\times\Gm$.

Now let $h_0:\C\to D_\R$ be an $\R$-algebra homomorphism with the
properties that $h_0(z)^*=h_0(\zbar)$ and the involution $x\mapsto
h_0(i)^{-1}x^*h_0(i)$ is positive (that is, $\tr_{B/\Q}(xh_0(i)^{-1}x^*h_0(i))>0$ for all nonzero $x$). Let $B=D^\op$ and let $V=D$, which
we consider as a free left $B$-module of rank one by multiplication on
the right. Then $\End_B(V)=D$, and one can find an element $\xi\in
D^\times$ with the properties that $\xi^*=-\xi$ and such that the
involution $\iota$ of $B$ defined by $x^\iota=\xi x^*\xi^{-1}$ is
positive  (see
Section I.7 of \cite{ht} or Section 5.2 of \cite{MR2192017} for the
existence of such a $\xi$).

We have an alternating pairing $\psi(\cdot,\cdot):D\times D\to\Q$
defined by $\psi(x,y)=\tr_{D/\Q}(x\xi y^*)$, and one sees easily that
$\psi(bx,y)=\psi(x,b^\iota y)$, and that $\psi(\cdot,h_0(i)\cdot)$ is
either positive- or negative-definite. After possibly replacing $\xi$
by $-\xi$, we can and do assume that it is
positive-definite.

It is easy to see that one has \[G(\R)\cong \operatorname{GU}(r,s)\]
for some $r$, $s$ with $r+s=n$. We impose the additional assumption
that in fact $\{r,s\}=\{n-1,1\}$. Note that by Lemma I.7.1 of
\cite{ht} one can find division algebras $D$ for which this
holds. We say that a compact open subgroup $K\subset G(\A^\infty)$
(respectively $K^p\subset G(\A^{p,\infty})$) is \emph{sufficiently small} if
for some prime $q$ (respectively some prime $q\ne p$) the
projection of $K$ (respectively $K^p$) to $G(\Q_q)$ contains no element of finite order
other than $1$. If $K$ is sufficiently small, we will consider the Shimura
variety $Sh(G,h_0|_{\C^\times}^{-1},K)$. It has a canonical model over
 $F$, which we denote by $X(K)$ (note that if $n>2$ the reflex
field is $F$, while if $n=2$ the reflex field is $\Q$, and we let
$X(K)$ denote the base-change of the canonical model from $\Q$ to $F$).

We say that a compact open subgroup $K$ of $G(\A)$ is of {\em level dividing $N$},
for some integer $N \geq 1$,
if for all primes $l\nmid
N$ we can write $K=K_lK^l$, where $K_l$ is a hyperspecial maximal
compact subgroup of $G(\Ql)$ and $K^l$ is a compact open subgroup of
$G(\A^{\infty,l})$.  (Note then that in fact $K = K_N \times \prod_{l \nmid N} K_l$,
for some compact open subgroup $K_N$ of $\prod_{l | N} G(\Ql)$.)
If $K$ is of level dividing $N$, 
then we similarly refer to $X(K)$ as a
{\em $U(n-1,1)$-Shimura variety of level dividing
$N$}.

We will now define integral models of these Shimura varieties for two
specific kinds of level structure.  We begin by introducing notation
related to the level structures in question.

We write $I_0$ for the Iwahori subgroup of
$\GL_n(\mathbb Q_p)\times \Q_p^{\times}$;
namely, $I_0$ is the subgroup of
$\GL_n(\Zp)\times\Zptimes$ consisting of elements whose first factor
lies in the usual Iwahori subgroup of matrices which are upper
triangular mod $p$. We write $I_1$ to denote the pro-$p$-Iwahori subgroup of 
$\GL_n(\mathbb Q_p)\times \Q_p^{\times}$; namely, $I_1$ is the 
(unique) pro-$p$ Sylow subgroup of $I_0$, and consists of those elements
of 
$\GL_n(\Zp)\times\Zptimes$ whose first factor
is upper triangular unipotent mod $p$, and whose second factor is congruent to $1 \bmod p$.
We write $I_1^*$ to denote the subgroup of $I_0$ consisting of matrices in
$\GL_n(\Zp)\times\Zptimes$ whose first factor
is upper triangular unipotent mod $p$. 

There is a natural isomorphism $\Z_p^{\times} = \F_p^{\times} \times (1 + p\Z_p),$
and this induces a natural isomorphism $I_1^* = \F_p^{\times} \times I_1.$
If we let $T$ denote the diagonal torus in $\GL_n$, then there is also a natural isomorphism
$I_0 = T(\mathbb F_p) \times I_1^*.$

We will define
integral models for $X(I_0K^p)$ and $X(I_1^*K^p)$ over the local rings
$\cO_{F,(\p)}$
and $\cO_{F(\zeta_{p-1}),(v)}$ respectively, where $\zeta_{p-1}$ denotes
a primitive $(p-1)$st root of unity, and $v$ is some fixed place
in $F(\zeta_{p-1})$ above $\p$;
here $K^p$ is a sufficiently small compact open subgroup
of $G(\A^{\infty,p})$, and we consider $I_0$ and $I_1^*$ as subgroups of
$G(\Qp)\cong\GL_n(\Qp)\times\Qptimes$. We will typically not
include $K^p$ in the notation, and we will write $\cX_0(p)$,
$\cX_1(p)$ for our integral models of $X_0(p):=X(I_0K^p)$ and $X_1(p):=X(I_1^*K^p)$ respectively.

\begin{remark}
{\em Note that in~\cite{HR},  
\cite{MR1902647}, 
and \cite{ht}, the authors work over~$\mathbb Z_p$, but we follow \cite{MR1124982}
in working over $\cO_{F,(\mathfrak p)}$, so as to satisfy the hypothesis
required to be in the global case of Section~\ref{sec:cohom}.
The appearance of $\zeta_{p-1}$ in the ring of definition
of $\cX_1(p)$ is a consequence of our use
of Oort--Tate theory in the definition of the integral model
in this case.
}
\end{remark}

In order to define
these integral models,
we firstly recall a certain category of abelian schemes (up to isogeny) with
polarisations and endomorphisms. If $\cF$ is a set-valued functor on the
category of connected, locally noetherian $\cO_{F,(p)}$-schemes, we will also
consider it to be a functor on the category of all locally noetherian
$\cO_{F_(p)}$-schemes by setting \[\cF(\coprod S_i):=\prod\cF(S_i).\] Let $\cO_B$ be the unique maximal $\Z_{(p)}$-order in $B$
which under our fixed identification $B\otimes_\Q\Qp=M_n(\Qp)\times
M_n(\Qp)^\op$ is identified with $M_n(\Z_{(p)}\times M_n(\Z_{(p)})^\op$. Let $S$ be a
connected, locally noetherian $\cO_{F,(\p)}$-scheme, and let $AV_S$ be the
category whose objects are pairs $(A,i)$, where $A$ is an abelian scheme over
$S$ of dimension $n^2$, and $i:\cO_B\to\End_S(A)\otimes\Z_{(p)}$ is a
homomorphism. We define homomorphisms in $AV_{S}$ by
\[\Hom\bigl((A_1,i_1),(A_2,i_2)\bigr)
=\Hom_{\cO_B}\bigl((A_1,i_1),(A_2,i_2)\bigr)\otimes\Z_{(p)}\]
(that is, the elements of $\Hom_S(A_1,A_2)\otimes\Z_{(p)}$ which
commute with the action of~$\cO_B$). The dual of an object $(A,i)$ of
$AV_{S}$ is $(\hat{A},\hat{i})$, where $\hat{A}$ is the dual abelian
scheme of $A$, and $\hat{i}(b)=(i(b^\iota))^\wedge$. A polarisation of
$(A,i)$ is a homomorphism $\lambda:(A,i)\to (\hat{A},\hat{i})$  in $AV_S$ with
the property that for some $n\ge 1$, $n\lambda$ is induced an ample
line bundle on~$A$. A principal polarisation is a polarisation which
is also an isomorphism in $AV_S$. A $\Q$-class of polarisations
is an equivalence class of homomorphisms $(A,i)\to(\hat{A},\hat{i})$ which contains a
polarisation, under the equivalence relation of differing by a 
$\Q^\times$-scalar.

Fix $K^p$ a sufficiently small open
compact subgroup of $G(\A^{\infty,p})$. Let $\cA_0$ be the set-valued functor on
the category of locally noetherian schemes over
$\cO_{F,(\p)}$ which sends a connected, locally noetherian scheme $S$ over $\cO_{F,(\p)}$ to the set of
isomorphism classes of the following data.
\begin{itemize}
\item A commutative diagram of morphisms in the category $AV_{S}$ of
  the
  form\[\xymatrix{A_0\ar[r]^{\alpha_0}\ar[d]^{\lambda_0}&A_1\ar[r]^{\alpha_1}\ar[d]^{\lambda_1}&\cdots\ar[r]^{\alpha_{n-2}}&A_{n-1}\ar[r]^{\alpha_{n-1}}\ar[d]^{\lambda_{n-1}}&A_0\ar[d]^{\lambda_0}\\ 
 \hat{A}_0&\hat{A}_{1}\ar[l]_{\hat{\alpha}_{0}}&\cdots\ar[l]_{\hat{\alpha}_{1}}&\hat{A}_{n-1}\ar[l]_{\hat{\alpha}_{n-2}}&\hat{A}_0\ar[l]_{\hat{\alpha}_{n-1}}   }\]where each $\alpha_i$ is an isogeny of degree $p^{2n}$, and
their composite is just multiplication by $p$. In addition, $\lambda_0$
is a $\Q$-class of polarisations containing a principal polarisation. Furthermore, we require
that each $A_i$ satisfies a compatibility between the two actions
of $\cO_{F}$ on the Lie algebra of $A_i$ (one action coming from the structure
morphism $\cO_{F,(\p)}\to\cO_S$, and the other from the $\cO_B$-action;
see~\cite[\S III.4]{ht} for a discussion of this condition).
\item A geometric point $s$ of $S$, and a $\pi_1(S,s)$-invariant $K^p$-orbit of isomorphisms \[\etabar:V\otimes_\Q\A^{p,\infty}\isoto
  H_1((A_0)_s,\A^{p,\infty})\] which are $\cO_B$-linear, and up
  to a constant in $(\A^{p,\infty})^\times$ take the $\psi$-pairing
  on the left hand side to the $\lambda_0$-Weil pairing on the right
  hand side. (This data is canonically independent of the choice of $s$; see the
  discussion on pages 390-391 of~\cite{MR1124982}.)

\end{itemize}
An isomorphism of this data is one induced by isomorphisms in
$AV_{S}$ which preserve the $\lambda_i$ up to an overall
$\Q^\times$-scalar. 

The functor $\cA_0$ is represented by a projective scheme $\cX_0(p)$
over $\cO_{F,(\p)}$, which is an integral
model for $X(I_0K^p)$. (See the proof of Lemma~3.2 of~\cite{ty}, which
shows that $\cX_0(p)$ is projective over the usual integral
model at hyperspecial level. At hyperspecial level, quasi-projectivity
is proved on page 391 of~\cite{MR1124982}, and projectivity can be
checked via the valuative criterion for properness as on page 392
of~\cite{MR1124982}. More properly, $\cX_0(p)$
 is an integral
model for a disjoint union of a number of copies of $X(I_0K^p)$,
due to the possible failure of the Hasse
principle; see for example section 7 of~\cite{MR1124982}, as well
as the discussion on p.~400 of the same reference. Since the
cohomology of a disjoint union of spaces is the direct sum of the
cohomologies of the individual spaces, this does not affect our
arguments, and we will not dwell on this point in the below.)
The proof of Proposition 3.4(3) of~\cite{ty}
(which goes over unchanged in our setting) shows that the special
fibre of $\cX_0(p)$ is a strict normal crossings divisor.

Our next goal is to describe an integral model $\cX_1(p)$, over
$\cO_{F(\zeta_{p-1}),(v)}$,
for $X(I_1^*K^p)$ (or rather, as in the previous paragraph, an
integral model of a
disjoint union of a number of copies of~$X(I_1^*K^p)$). Recalling that $T$ denotes the diagonal torus in $\GL_n$, we let
$\imath_i:\G_m \to T$ denote the embedding of tori identifying $\G_m$
with the subgroup of $T$ consisting of elements which are $1$ away
from the $i$th diagonal entry.  We use the same notation $\imath_i$ to
denote the map $\F_p^{\times} \to T(\mathbb F_p)$ induced by the map
of tori.

The quotient $I_0/I_1^*$
is naturally identified with $T(\mathbb F_p)$, and so $T(\mathbb F_p)$
acts on $X_1(p),$ with quotient isomorphic to $X_0(p)$.

Given an $S$-valued point of $\cA_0$, let $A_i(p^\infty)$ be the $p$-divisible
group associated to $A_i$, $i=0,\dots,n-1$. Each $A_i(p^\infty)$ has an action
of $\cO_B=M_n(\Z_{(p)})\times M_n(\Z_{p})^\op$. Let $X_i=e_{11}A_i(p^\infty)$,
where $e_{11}$ is the usual idempotent in $M_n(\Z_{(p)})$ (and is zero on the
second factor). Then each $X_i$ is a $p$-divisible group of height $n$ and
dimension~$1$, and we obtain a chain of isogenies of degree $p$ $$\mathcal C : X_0
\buildrel \alpha_0 \over \longrightarrow
X_1 
\buildrel \alpha_1 \over \longrightarrow
\cdots 
\buildrel \alpha_{n-2} \over \longrightarrow
X_{n-1} 
\buildrel \alpha_{n-1} \over \longrightarrow
 X_n := X_0,$$
whose composite is equal to multiplication by $p$.

We let $\OT$ denote the Artin stack over $\cO_{F(\zeta_{p-1}),(v)}$ given
by 
$$\OT:= \bigl[\Spec \cO_{F(\zeta_{p-1}),(v)}[X,Y]/(X Y - w_p)/\mathbb G_m\bigr],$$
where $\mathbb G_m$ acts via $\lambda\cdot (X,Y) = (\lambda^{p-1} X,
\lambda^{1-p}Y),$ and $w_p$ is some explicit element of 
$\cO_{F(\zeta_{p-1}),(v)}$ of valuation one.  Oort--Tate theory shows
that $\OT$ classifies finite flat group schemes of order $p$
over $\cO_{F(\zeta_{p-1}),(v)}$-schemes.
The universal group scheme over $\OT$ is
the stack 
$$\mathcal G:=
\bigl[ \Spec \cO_{F(\zeta_{p-1}),(v)}[X,Y,Z]/(X Y - w_p, Z^p - X Z)/\mathbb G_m\bigr],$$
where $\mathbb G_m$ acts on $X$ and $Y$ as above,
and on $Z$ via $\lambda\cdot Z = \lambda Z$.
The morphism $\mathcal G \to \OT$ is the evident one,
the zero section of $\mathcal G$ is cut out by the equation $Z = 0$,
and we let $\mathcal G^{\times}$ denote the closed subscheme of
$\mathcal G$ cut out by the equation $Z^{p-1} - X=0$; it is 
the so-called {\em scheme of generators} of $\mathcal G$. (See
Theorem~6.5.1 of~\cite{MR2234862} for these facts, which are a
restatement of Theorem 2 of~\cite{MR0265368} in the language of stacks.)

We define $\cX_1(p)$ via the Cartesian diagram
\numequation
\label{eqn:Oort--Tate diagram}
\xymatrix{\cX_1(p) \ar[d] \ar[r] &
\mathcal G^{\times} \times_{\cO_{F(\zeta_{p-1}),(v)}} \cdots \times_{\cO_{F(\zeta_{p-1}),(v)}} \mathcal G^{\times} 
\ar[d] \\
\cX_0(p)_{/\cO_{F(\zeta_{p-1}),(v)}} \ar[r] & 
\OT\times_{\cO_{F(\zeta_{p-1}),(v)}} \cdots \times_{\cO_{F(\zeta_{p-1}),(v)}} \OT,}
\end{equation}
where the bottom horizontal arrow is given by
\begin{equation*}
\mathcal C \mapsto \bigl(\ker(\alpha_0),\ldots,
\ker(\alpha_{n-1})\bigr).
\end{equation*}
Note that the right-hand vertical arrow is finite and relatively representable by
construction, and so the left-hand vertical arrow is a finite morphism of schemes.
The action of $T(\mathbb F_p)$ on $X_1(p)$ extends to an action
on $\cX_1(p)$, namely the action pulled back from the action of
$T(\mathbb F_p)$ on 
$\mathcal G^{\times} \times_{\cO_{F(\zeta_{p-1}),(v)}} \cdots \times_{\cO_{F(\zeta_{p-1}),(v)}} \mathcal G^{\times}$
over
$\OT\times_{\cO_{F(\zeta_{p-1}),(v)}} \cdots \times_{\cO_{F(\zeta_{p-1}),(v)}} \OT.$

Let $\pi := (-p)^{1/(p-1)}$, and let $w$ be the unique finite place of
$L:=F(\zeta_{p-1},\pi) $ lying over our fixed place $v$ of
$F(\zeta_{p-1})$. We let $\mathcal O$ denote the localization of $\cO_L$ at~$w$, and 
we let $\cX_1(p)_{\mathcal O}$ denote the normalisation of 
the base-change $\cX_1(p)_{/\mathcal O}$ of $\cX_1(p)$ over $\mathcal O$.
We write $I := \Gal\bigr(L/ F(\zeta_{p-1})\bigl);$ this is also the inertia group
at~$w$ in $\Gal(L/F)$. The group $I$ acts naturally on
$\cX_1(p)_{\mathcal O}$. The mod $p$ cyclotomic character $\omega$ induces an isomorphism (which we continue to denote
by~$\omega$)
$$\omega:I \iso \mathbb F_p^{\times}.$$  For each $i = 0,\ldots,n-1$,
we let $\alpha_i:I \to T$ denote the composite $\imath_i \circ \omega^{-1}.$

\begin{lemma}\label{lem:semistable model}
The scheme $\cX_1(p)_{\mathcal O}$
is a semistable projective model for $X_1(p)_{/L}$ over $\mathcal O$,
the natural morphism 
\numequation
\label{eqn:x1 to x0}
\cX_1(p)_{\mathcal O} \to \cX_0(p)
\end{equation}
is tamely ramified,
and the action of $I\times T$ on $X_1(p)_{/L}$ extends to an action
on $\cX_1(p)_{\cO}$.   
Furthermore,
on each irreducible component of its special fibre,
the inertia group $I$ acts through the composite of 
the $T$-action with one of the characters $\alpha_i$.
\end{lemma}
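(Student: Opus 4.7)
The proof is local at closed geometric points of $\cX_0(p)$'s special fibre, based on the known strict semistable structure of $\cX_0(p)$ from Harris--Taylor \cite{ht} and Taylor--Yoshida \cite{ty}, combined with Oort--Tate theory and Lemma~\ref{lem:tame}. At such a point $\bar x$, one has
\[
\widehat{\cO_{\bar x, \cX_0(p)}} \cong \cshO{F,(\p)}[[x_1, \ldots, x_n]]/(x_1 \cdots x_k - p)
\]
for some $1 \leq k \leq n$, with $\{x_i = 0\}$ (for $i \leq k$) cutting out the locus where $\ker \alpha_i$ is non-\'etale; Oort--Tate theory identifies the parameter $a_i$ of $\ker \alpha_i$ with $u_i x_i$ for a unit $u_i$ when $i \leq k$, and with a unit when $i > k$.

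The first step is to verify semistability and tame ramification. By the Cartesian diagram~\eqref{eqn:Oort--Tate diagram} and the presentation $Z^{p-1} = X$ of $\mathcal G^{\times}$, the formal neighbourhood of $\cX_1(p) \otimes \cO$ over $\bar x$ (after base-change to $\cshO{L_w}$, where $w$ is a place of $L$ above $v$) is described by $\cshO{L_w}[[x_1,\ldots,x_n]][Z_1,\ldots,Z_k]/(x_1\cdots x_k - p,\, Z_i^{p-1} - u_i x_i)$. Since the residue field of $\cshO{L_w}$ is algebraically closed and $p-1$ is coprime to $p$, Hensel's lemma furnishes $(p-1)$-th roots of any unit; using this one can rescale $Z_i \to y_i$ so that $y_i^{p-1} = x_i$ (absorbing units into new local coordinates on $\cX_0(p)$). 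A further adjustment by an element of $\mu_{2(p-1)} \subset \cshO{L_w}^{\times}$ (to handle the sign discrepancy between $\pi^{p-1} = -p$ and $\prod y_i^{p-1} = p$) yields $y_1 \cdots y_k = \pi$, at the cost of the local ring splitting into a product of $(p-1)$ regular pieces. Each such piece has completed local ring
\[
\cshO{L_w}[[y_1, \ldots, y_n]]/(y_1 \cdots y_k - \pi),
\]
establishing semistability of $\cX_1(p)_\cO$. The morphism $\cX_1(p)_\cO \to \cX_0(p)$ locally takes the form $x_i = y_i^{p-1}$ for $i \leq k$, matching condition~(2) of Lemma~\ref{lem:tame} with $e = p-1 = [L : F(\zeta_{p-1})]$; tame ramification follows.

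Extending the $I \times T(\mathbb F_p)$-action is then straightforward: $T(\mathbb F_p) = \mu_{p-1}^n$ acts on $\cX_1(p)$ via Teichm\"uller rescaling of each Oort--Tate generator $Z_i$ (an action already defined over $\cO_{F(\zeta_{p-1}),(v)}$, hence surviving base change and normalisation), and the Galois action of $I$ on $\cO$ extends uniquely to the normalisation by functoriality.

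The most delicate point is the claim about irreducible components. Since the formal completion of $\cX_1(p)_\cO$ at a worst closed point breaks into $(p-1)$ regular semistable pieces, and since the formulas above show that $I$ permutes these pieces (via $\eta \mapsto \tilde\omega(g)\eta$), a global irreducible component $D$ of the special fibre is the Zariski closure of an $I \times T(\mathbb F_p)$-orbit of local components. To describe the $I$-action on $D$, I would apply Lemma~\ref{lem:inertia computation} to reduce to finding a single character $\psi: I \to T(\mathbb F_p)$, and then perform a local computation tracking how the normalisation rescalings $Z_i \to v_i Z_i$ and $y_1 \to y_1/\zeta_0$ (with $\zeta_0^{p-1} = -1$) twist the naive $I$-action; this should identify $\psi$ with $\alpha_i = \imath_i \circ \omega^{-1}$ for the index $i$ labelling the component $V_i \subset (\cX_0(p))_s$ that $D$ covers. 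The hard part I expect is the global bookkeeping -- reconciling that $I$ does not preserve individual local pieces $P_\eta$ but does preserve their orbits (which form the global component $D$), and pinning down signs to distinguish $\omega$ from $\omega^{-1}$.
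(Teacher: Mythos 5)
Your proposal and the paper's proof share the same essential local computation at supersingular points (invert $p-1$-st roots, normalise, split into $p-1$ branches of the form $\cO[[V_1,\ldots,V_n]]/(V_1\cdots V_n - u\pi)$), but there is a real gap in how you pass from supersingular points to general closed geometric points of $\cX_0(p)_s$.

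Your argument opens by asserting, at an arbitrary closed geometric point $\bar x$ with local model $\cshO{F,(\p)}[[x_1,\ldots,x_n]]/(x_1\cdots x_k - p)$, that ``Oort--Tate theory identifies the parameter $a_i$ of $\ker\alpha_i$ with $u_i x_i$ for a unit $u_i$ when $i \leq k$, and with a unit when $i > k$.'' This identification is not a consequence of Oort--Tate theory alone; it is a statement about the bottom arrow of the Cartesian diagram~(\ref{eqn:Oort--Tate diagram}), i.e.\ about how the isogeny chain interacts with the local coordinates on the Shimura variety. In the literature (via the proof of \cite[Prop.~3.4]{ty}) this is established only at supersingular points, where $k = n$ and all $n$ kernels $\ker\alpha_i$ are connected. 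At points of the higher strata $\cX_0(p)^{(h)}_s$ with $h > 0$, the $p$-divisible group has a nontrivial \'etale part, the structure of the isogeny chain is more complicated, and the matching of the $x_i$ with particular $\ker\alpha_i$'s requires an argument you have not given. The paper sidesteps this entirely via Deligne's homogeneity principle: the completion of the morphism $\cX_0(p) \to \OT \times \cdots \times \OT$ at $\bar x$ depends, up to isomorphism, only on the stratum $h$ containing $\bar x$; combined with the facts that the closure of $\cX_0(p)^{(h)}_s$ contains $\cX_0(p)^{(0)}_s$ and that tame ramification (and semistability) are open conditions, the verification reduces to the supersingular locus, where your explicit computation (and the paper's) is valid. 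Without this reduction, your claimed local description is not justified, and the proof is incomplete. As written, you are essentially assuming the conclusion of the homogeneity argument.

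On the final claim about the $I$-action on components, your plan to invoke Lemma~\ref{lem:inertia computation} would work (after tameness and semistability are established, you do have a tame semistable context), but is a detour. The paper's computation is direct and pins down the character with no further bookkeeping: $I$ fixes each $V_i$ and acts on $\pi$ via $\omega$, hence on $u := V_1\cdots V_n/\pi$ via $\omega^{-1}$; meanwhile $T$ acts on $V_i$ via the $i$-th diagonal entry, hence on $u$ via the determinant. From these two facts one reads off immediately that on the component $\{V_i = 0\}$ the $I$-action is $\imath_i \circ \omega^{-1} = \alpha_i$ composed with the $T$-action. Your sketch acknowledges uncertainty here (``this should identify $\psi$ with $\alpha_i$'', ``pinning down signs''); the direct route resolves this cleanly.
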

\begin{proof}
We will apply a form of Deligne's homogeneity principle, as described in the proof of
\cite[Prop.~3.4]{ty}, to the morphism~(\ref{eqn:x1 to x0}). 
The scheme here denoted $\cX_0(p)$
is there denoted $X_{U}$ (and the integral model there is considered over $\Z_p$ rather
than $\cO_{F,(\p)}$, but this is immaterial for our present purposes),
while the scheme there denoted $X_{U_0}$ is an integral model
of the Shimura variety (in the notation of the present paper) $X(K_p K^p)$, where 
$K_p = \GL_n (\Z_p)\times \Z_p^{\times}.$   
We let $\cX_0(p)_s^{(h)}$ (for $0 \leq h \leq n-1$) denote the locally closed subset
of the special fibre $\cX_0(p)_s$, obtained by pulling back the
locally closed subset
$\overline{X}_{U_0}^{(h)}$ defined in section 3 of \cite{ty} under the natural projection $\cX_0(p)_s = \overline{X}_U \to
\overline{X}_{U_0}.$ 

We will show that the morphism~(\ref{eqn:x1 to x0}) 
is tamely ramified in the formal neighbourhood of any closed geometric point
of the special fibre $\cX_0(p)_s$, and hence (by Lemma \ref{lem:tame})
that it is tamely ramified.  We first note that the morphism
$\cX_0(p)_{/\cO_{F(\zeta_{p-1}),(v)}} \to \cX_0(p)$ induces an isomorphism
on special fibres, and so we are free
to replace $\cX_0(p)$ by $\cX_0(p)_{/\cO_{F(\zeta_{p-1}),(v)}}$ in our considerations.
We next note that 
the completion of the bottom arrow of~(\ref{eqn:Oort--Tate diagram}) at
a closed geometric point $\overline{x}_0$ of $\cX_0(p)_s$ depends up to isomorphism only on the
value of $h$ for which $\overline{x}_0 \in \cX_0(p)^{(h)}_s$ (since the $p$-divisible 
group attached to the point $\overline{x}_0$ depends only on the value of~$h$), 
and hence that
the restriction of~(\ref{eqn:x1 to x0})
to a formal neighbourhood of $\overline{x}_0$ depends only on the value of $h$.
Then
by \cite[Lem.~3.1]{ty}, the closure of $\cX_0(p)^{(h)}_s$ contains $\cX_0(p)^{(0)}_s$.
Since being tamely ramified is an open condition, we conclude from these
two conditions that in order to prove the lemma,
it suffices to show that 
the restriction of~(\ref{eqn:x1 to x0})
to a formal neighbourhood of $\overline{x}_0$ is tamely ramified at closed geometric
points $\overline{x}_0$ of $\cX_0(p)_s^{(0)}$.
(As already indicated, this argument is a variation on Deligne's homogeneity principle.)

Thus, consider a closed geometric 
supersingular point $\overline{x}_0$ of  $\cX_0(p)_s^{(0)}$, so that
$\overline{x}_0$ admits a formal neighbourhood of the form
$$\Spec W(\Fbar_p)[[T_1,\ldots,T_n]]/(T_1 \cdots T_n - w_p).$$
The proof of \cite[Prop.~3.4]{ty} shows that the $T_i$ may be taken to
be the matrix of $\alpha_{i-1}$ on tangent spaces, so that 
the map $\cX_0(p) \to \OT \times_{\cO_{F(\zeta_{p-1}),(v)}} \cdots \times_{\cO_{F(\zeta_{p-1}),(v)}} \OT$ may
be defined in the formal neighbourhood of $\overline{x}_0$ by the map
$$(T_1,\ldots,T_n) \mapsto \bigl( (T_1,U_1),\ldots,(T_n,U_n)\bigr),$$
where $U_i = T_1\cdots \hat{T_i} \cdots T_n $ (and, as is usual in these
situations, a hat on a variable denotes that that variable is omitted in the expression).
Thus a formal neighbourhood of a closed geometric point lying over $\overline{x}_0$ 
in $(\cX_1(p)_{/\mathcal O})_s$ is isomorphic to
$$\Spec W(\Fbar_p)[\pi][[V_1,\ldots,V_n]]/\bigl( (V_1\cdots V_n)^{p-1} - w_p\bigr).$$
If we write $u:= V_1\cdots V_n/\pi,$ then we see that 
$u^{p-1} = -w_p/p,$ and hence that $u$ lies in the normalisation of this formal neighbourhood.
Furthermore, on each component of this normalisation, $u$ is equal to one of 
the $(p-1)$st roots of $-w_p/p$ lying in $W(\Fbar_p)$.
Thus the normalisation of this formal neighbourhood is a union of components,
each isomorphic to
$$\Spec \cO[[V_1,\ldots,V_n]]/(V_1\cdots V_n - u\pi),$$
with the morphism~(\ref{eqn:x1 to x0}) being given by $T_i = V_i^{p-1}$.
Thus this morphism is indeed tamely ramified in the formal neighbourhood of $\overline{x}_0$.

It is clear that the $I\times T$-action on $X_1(p)_{L}$ extends to
an action on $\cX_1(p)_{/\cO}$, and hence to its normalisation $\cX_1(p)_{\cO}.$
As for the final statement, note that $I$ acts on $\pi$ via~$\omega$,
and hence on $u$ via~$\omega^{-1}$,  while $I$ fixes each $V_i$.
Also $T$ acts on $V_i$ through multiplication by the $i$th diagonal entry,
and so acts on $u$ via multiplication by the determinant.
Combining these facts, we see that $I$ acts on the components of the special fibre
on which $V_i = 0$ via $\imath_i\circ \omega^{-1}$.
\end{proof}

\begin{remark}\label{rem:shimura gives a strictly semistable context}
{\em
This lemma (and the fact that $\cX_0(p)$ is strictly semistable) shows that the map $\cX_1(p)_{\mathcal O} \to \cX_0(p)$
provides a tame  strictly
semistable context, in the sense of
Subsection~\ref{subsec:equivariant}. In particular, we can apply
Theorem \ref{thm:arb cohom equivariant} in this setting (of course
taking the group $G$ to be the abelian group $T$), and we see
that each character $\psi_j$ as in the statement of that Theorem is
equal to one of the characters~$\alpha_i$.
}
\end{remark}

\subsection{Canonical local systems}
If $K$ is a sufficiently small compact open subgroup of $G(\mathbb A^{\infty})$,
and $V$ is a continuous representation of $K$ on a finite-dimensional $\Fbar_p$-vector space (this vector
space being equipped with its discrete topology),
then we may associate to $V$ an \'etale local system $\mathcal F_V$
of $\Fbar_p$-vector spaces on $X(K)$ as follows:
Choose an open normal subgroup $K' \subset K$ lying in the kernel of $V$,
and regard $V$ as a representation of the quotient $K/K'$.
Since  $X(K')$ is naturally an \'etale $K/K'$-torsor over $X(K)$,
we may form the \'etale local system of $\Fbar_p$-vector spaces over $X(K)$ associated
to the $K/K'$-representation $V$.  This local system is independent of the choice of $K'$,
up to canonical isomorphism, and we define it to be $\mathcal F_V$.

\begin{defn}
{\em
We refer to the \'etale local systems $\mathcal F_V$ that arise by the preceding
construction as the {\em canonical local systems}
on $X(K)$.  If we may choose $K'$ in the kernel of $V$
to be of level dividing~$N$ (so that in particular $X(K)$ is of level
dividing~$N$),
then we say that $\mathcal F_V$ can be {\em trivialised
at level~$N$}.
}
\end{defn}

\subsection{The Eichler--Shimura relation}

Let $X$ be a $U(n-1,1)$-Shimura variety of level dividing $N$. Let   $w$ be a
place of $F$ such that $l:=w|_\Q$ splits in $F$ and does not divide
$N$. There is a natural action via correspondences on $X$ of Hecke operators $T_{w}^{(i)}$, $0\le
i\le n$, where $T_{w}^{(i)}$ is the double coset operator
corresponding to \[
\begin{pmatrix}
  l1_i&0\\0&1_{n-i}
\end{pmatrix}\times 1\in\GL_n(\Ql)\times\Zl^\times,\] where we use the
assumption that $l\nmid N$ and identify a hyperspecial maximal compact
subgroup of $G(\Ql)$ with $\GL_n(\Zl)\times\Zl^\times$ via an
isomorphism $D_w\cong M_n(\Ql)$. These correspondences then act on the cohomology
$H^j_{\et}(X_{\Qbar},\Fbar_p)$.

More generally if $\mathcal F_V$ is a canonical local system on $X$ that can be trivialised at level $N$,
then we obtain an action of the double coset operators $T_w^{(i)}$ on $\mathcal F_V$,
and hence on the cohomology
$H^j_{\et}(X_{\Qbar},\mathcal F_V)$.

The following theorem regarding this action is then
an immediate consequence of the main result of
\cite{wedhorn-ES} (which proves the Eichler--Shimura relation for PEL
Shimura varieties at places of good reduction at which the group is split).

\begin{thm}\label{thm: Eichler-Shimura}
  Let $X$ be a $U(n-1,1)$-Shimura variety of level dividing~$N$, and $\mathcal F_V$ a canonical
  local system on $X$.
 Let $w$ be a place of $F$ such that $w|_\Q$ splits in $F$ and
  does not divide $Np$. Then $\sum_{i = 0}^n (-1)^{i}(\operatorname{Norm}
  w)^{i(i-1)/2} T_{w}^{(i)} \Frob_w^{n-i}$ acts as $0$ on each
  $H_{\et}^j(X_{\Qbar},\mathcal F_V)$.
\end{thm}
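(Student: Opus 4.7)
The plan is to invoke the main result of \cite{wedhorn-ES} essentially verbatim, with the only real work being (a) checking that the hypotheses of that theorem apply at the place $w$ in our setting, and (b) bootstrapping from the constant-coefficient case to the case of an arbitrary canonical local system $\mathcal F_V$. No new $p$-adic Hodge theory input is needed beyond what has been developed above.

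First I would translate the setup. The assumption that $l=w|_\Q$ splits in $F$ means that under the isomorphism $G\times_\Q\Q_l\cong\GL_n\times\G_m$ (induced by the chosen identification $D_w\cong M_n(\Q_l)$), the group $G(\Q_l)$ is split. Since $X$ has level dividing $N$ and $l\nmid N$, the level at $l$ is the hyperspecial maximal compact $\GL_n(\Z_l)\times\Z_l^\times$, so the Shimura variety $X$ has smooth integral canonical model over $\cO_{F,(w)}$ (in particular good reduction at $w$), and the Hecke operators $T_w^{(i)}$ are the double coset operators associated to the dominant minuscule cocharacters of $\GL_n\times\G_m$. These are exactly the hypotheses under which Wedhorn establishes the Eichler--Shimura congruence relation for PEL Shimura varieties in \cite{wedhorn-ES}: there is an identity of correspondences on the special fibre of the integral model, which via proper smooth base change becomes an identity on $H^j_{\et}(X_{\Qbar},\Fbar_p)$. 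The condition $l\ne p$ is needed to ensure that $\Frob_w$ acts, and that proper smooth base change applies to $\ell$-adic (with $\ell=p$) \'etale cohomology between the generic and special fibres.

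Next I would pass from constant coefficients to the local system $\mathcal F_V$. By the definition of a canonical local system that can be trivialised at level~$N$, there is an open normal subgroup $K'\subset K$ of level dividing $N$, contained in the kernel of $V$, so that $X(K')\to X(K)$ is an \'etale $K/K'$-torsor and $\mathcal F_V$ is the local system cut out by the $K/K'$-representation~$V$. Since $l\nmid N$, the cover $X(K')\to X(K)$ extends to an \'etale cover of integral canonical models over $\cO_{F,(w)}$, and the Hecke correspondences $T_w^{(i)}$ lift compatibly: they act on $H^j_{\et}(X(K')_{\Qbar},\Fbar_p)$ commuting with the $K/K'$-action, and $H^j_{\et}(X_{\Qbar},\mathcal F_V)$ is computed as $\Hom_{K/K'}\bigl(V^\vee,H^j_{\et}(X(K')_{\Qbar},\Fbar_p)\bigr)$ (or the analogous $V$-isotypic construction). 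Applying Wedhorn's relation on $X(K')$ and taking $K/K'$-equivariant maps then yields the desired vanishing on $H^j_{\et}(X_{\Qbar},\mathcal F_V)$.

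The main obstacle, and the only point that requires care, is matching conventions: one has to check that the normalisation of the Hecke operators $T_w^{(i)}$ used here (via the explicit double cosets $\mathrm{diag}(l\cdot 1_i,1_{n-i})\times 1$), the sign $(-1)^i$, the twist by $(\operatorname{Norm}w)^{i(i-1)/2}$, and the arithmetic/geometric Frobenius convention for $\Frob_w$ (recall our convention that Frobenii are geometric), all agree with those of \cite{wedhorn-ES}. Once the dictionary between the two conventions is set up, the identity of correspondences in Wedhorn's theorem becomes exactly the identity claimed here. With that bookkeeping done, the result follows, as asserted, immediately.
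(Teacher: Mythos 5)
Your proposal is correct and follows essentially the same route as the paper, which simply cites the main result of \cite{wedhorn-ES} as giving the theorem immediately; the additional details you supply (verifying Wedhorn's hypotheses of good reduction and a split group at $w$, reducing from $\mathcal F_V$ to constant coefficients via the prime-to-$w$ \'etale cover $X(K')\to X(K)$, and reconciling normalisations) are exactly the bookkeeping the authors are implicitly invoking.
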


\subsection{Vanishing and torsion-freeness
of cohomology for certain $U(n-1,1)$-Shimura varieties}\label{subsec:
vanishing theorems for U(n-1,1)}
Let $X$ denote a $U(n-1,1)$-Shimura variety as above,
and let $\mathcal F_V$ denote a canonical local system on $X$.
Choose $N$ so that $X$ has level dividing~$N$, so that $\mathcal F_V$
can be trivialised at level~$N$, and so that $p$ divides~$N$. Assume
that the projection of the corresponding level $K$ to
$G(\A^{p,\infty})$ is sufficiently small. 

Let $\T=\Zpbar[T^{(i)}_w]$ be the polynomial ring in the
variables $T^{(i)}_w$, $1\le i\le n$, where $w$ runs over the places
of $F$ such that $w|_\Q$ splits in $F$ and does not divide~$N$.
Let $\mathfrak m$ be a maximal ideal in $\T$ with residue field
$\Fpbar$, and suppose that there exists a continuous irreducible
representation $\rho_{\mathfrak m}:G_{F} \to \GL_n(\Fbar_p)$ which is
unramified at all finite places not dividing $N$, and which satisfies
$\chara\bigl(\rho_\m(\Frob_w)\bigr) \equiv \sum_{i = 0}^n (-1)^{i}(\operatorname{Norm} w)^{i(i-1)/2}
T_{w}^{(i)} X^{n-i} \bmod \mathfrak m$ for all $w \nmid N$ such that
$w|_\Q$ splits in~$F$. Continue to fix a choice of a place $\p$ of $F$ dividing
$p$, and write $\GQp$ for $G_{F_\p}$ from now on. Recall that the choice of $\p$
also gives us an isomorphism $G(\Qp)\cong\GL_n(\Qp)\times\Qptimes$ as
in Section~\ref{subsec:semistable reduction}.

We consider the following further hypothesis on $\rho_{\mathfrak m}$
(this is Hypothesis \ref{hyp:char} below):

\begin{hyp}
\label{hyp:Galois char} 
{\em  If $\theta:G_{F} \to \GL_m(\Fpbar)$ is any  continuous, irreducible
  representation with the property that the characteristic polynomial of
$\rho_\m(g)$ 
annihilates $\theta(g)$ for every $g \in G_{F}$,
then $\theta$ is 
equivalent
to $\rho_\m$.  }
\end{hyp}

We will now prove our first main result, a vanishing theorem for the cohomology of $X$
with $\cF_V$-coefficients.

\begin{theorem}\label{thm:main one}
Suppose that $\rho_{\mathfrak m}$ satisfies Hypothesis~{\em \ref{hyp:Galois char}},
that $\rho_{\m}|_{\GQp}$ is $r$-regular for some
$r \leq (n-1)/2$, that $p>n(n-1)/2+1$ and, if $r = (n-1)/2$,
suppose in addition that
$\rho_{\mathfrak m \, | G_{\mathbb Q_p}}$ contains an irreducible subquotient
of dimension greater than one. Then the localisations $H^i_{\et}(X_\Qbar, \cF_V)_{\mathfrak m}$ vanish
for $i
\leq r$ and for $i\ge 2(n-1)-r$.\end{theorem}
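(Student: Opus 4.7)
The strategy is to argue by contradiction, combining Theorem \ref{thm:arb cohom equivariant} (the $G_{\Q_p}\times G$-equivariant integral $p$-adic Hodge theory of mod $p$ cohomology) with Theorem \ref{thm: generic vanishing in small HT weights} (which rules out $r$-regular subquotients of reductions of tamely potentially semistable representations with Hodge--Tate weights in a small range). Suppose $H^i_{\et}(X_{\overline{\Q}}, \cF_V)_\m \ne 0$ for some $i \le r$. Invoking the Eichler--Shimura relation (Theorem \ref{thm: Eichler-Shimura}) together with Chebotarev density, every irreducible $G_F$-subquotient $\sigma$ of this cohomology has the property that the characteristic polynomial of $\rho_\m(g)$ annihilates $\sigma(g)$ for every $g \in G_F$; Hypothesis \ref{hyp:Galois char} then forces $\sigma \cong \rho_\m$, so $\rho_\m$ itself appears as a $G_F$-subquotient of $H^i_{\et}(X_{\overline{\Q}}, \cF_V)_\m$, and hence every irreducible $G_{\Q_p}$-constituent of $\rho_\m|_{G_{\Q_p}}$ appears as a $G_{\Q_p}$-subquotient.

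Next, I pass to a finer level $K_1 \subseteq K$ at which $\cF_V$ trivialises and whose $p$-component is $I_1^{*}$, so that $H^i_{\et}(X_{\overline{\Q}}, \cF_V)_\m$ injects into the $\chi_V$-isotypic component of $H^i_{\et}(X(K_1)_{\overline{\Q}}, \Fbar_p)_{\m'}$, where $\chi_V$ is the character of $T(\F_p)$ by which $V$ is acted upon (one reduces to the case of characters by decomposing $V$ as a sum of $T(\F_p)$-characters). By Lemma \ref{lem:semistable model} and Remark \ref{rem:shimura gives a strictly semistable context}, the integral models $\cX_1(p)_{\cO} \to \cX_0(p)$, suitably adjusted for the level structure away from $p$, provide a tame strictly semistable context with abelian group $G = T(\F_p)$. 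Theorem \ref{thm:arb cohom equivariant} then embeds any irreducible $G_{\Q_p}\times T(\F_p)$-subquotient of this cohomology into the reduction of a $G_{\Q_p}\times T(\F_p)$-stable $\cO_E$-lattice in a representation that becomes semistable over $G_{L_w}$ with Hodge--Tate weights in $[-i,0]$, and on which the inertial action decomposes as a sum of pieces where $I_{\Q_p}$ acts via the characters $\alpha_j = \imath_j \circ \omega^{-1}$ composed with the $T(\F_p)$-action. Taking the $\chi_V$-isotypic part, any chosen irreducible $G_{\Q_p}$-subquotient $\bar\sigma$ of $\rho_\m|_{G_{\Q_p}}$ embeds in the reduction of a Galois-stable lattice in a potentially semistable $G_{\Q_p}$-representation with Hodge--Tate weights in $[-i,0]$ and inertial type a direct sum of characters of the form $\omegat^{a_j}$ with $0\le a_j<p-1$.

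After a cyclotomic Tate twist shifting Hodge--Tate weights into $[0,i]\subseteq[0,r]$ (which preserves $r$-regularity, as it shifts all exponents by a common constant), Theorem \ref{thm: generic vanishing in small HT weights} applies: the hypotheses $p > n(n-1)/2 + 1$ and $r\le (n-1)/2$ hold by assumption, and in the boundary case $r=(n-1)/2$, the hypothesis that $\rho_\m|_{G_{\Q_p}}$ contains an irreducible subquotient of dimension greater than one permits us to choose $\bar\sigma$ to have that property. The conclusion, that $\bar\sigma$ is \emph{not} $r$-regular, then contradicts the fact that $r$-regularity of $\rho_\m|_{G_{\Q_p}}$ is inherited by each of its irreducible subquotients (their exponents being a subset of those of the whole). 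This handles the range $i\le r$; the range $i\ge 2(n-1)-r$ follows by Poincar\'e duality for $\cF_V$, which interchanges the two ranges while preserving the hypotheses (applied to the dual system of Hecke eigenvalues). The main obstacle is verifying the determinant condition $\det\bar\sigma|_{I_{\Q_p}} = \omega^{\sum a_j + n(n-1)/2}$ required by Theorem \ref{thm: generic vanishing in small HT weights}, which demands an explicit calculation tracking $\chi_V$, the characters $\alpha_j$, and the Tate twist; a secondary nuisance is the careful bookkeeping needed to reduce from an arbitrary $U(n-1,1)$-Shimura variety of the prescribed level to the specific integral models $\cX_0(p)$ and $\cX_1(p)_{\cO}$.
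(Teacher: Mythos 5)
Your proposal correctly identifies the main ingredients (Eichler--Shimura plus Hypothesis~\ref{hyp:Galois char}, the equivariant $p$-adic Hodge theory of Theorem~\ref{thm:arb cohom equivariant} in the tame strictly semistable context, Theorem~\ref{thm: generic vanishing in small HT weights}, and Poincar\'e duality for the upper range), but there are two genuine gaps. First, the claimed injection of $H^i_{\et}(X_{\Qbar},\cF_V)_\m$ into the cohomology at level $I_1^*K^p$ with constant coefficients is not a formal consequence of passing to a cover: the edge map of the Hochschild--Serre spectral sequence is injective only if the lower-degree cohomology of the cover vanishes after localising at $\m$. The paper handles this by an induction on $i$ (together with a short exact sequence of local systems $0 \to \cF_V \to \pi_*\Fbar_p^n \to \cF_W \to 0$ to pass to constant coefficients, and the $p$-group argument to descend from $I_1$ to $K_p$-level). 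Without the induction, or without first reducing to the minimal nonvanishing degree, the reduction to $X(I_1^*K^p)$ with trivial coefficients is unjustified.

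Second, and more seriously, the cyclotomic Tate twist does not satisfy the determinant hypothesis of Theorem~\ref{thm: generic vanishing in small HT weights}. Twisting by $\epsilon^i$ leaves the inertial type unchanged (since $\epsilon$ is crystalline) and changes the determinant by $\omega^{ni}$, so with $a_j$ the exponents of the inertial type characters one finds $\det(\rho_\m\otimes\omega^i)|_{\IQp} = \omega^{\sum a_j - n(n-1)/2 + ni}$, whereas the theorem requires $\omega^{\sum a_j + n(n-1)/2}$; matching these forces $n(n-1-i)\equiv 0\pmod{p-1}$, which fails for every $i \le r \le (n-1)/2 < n-1$ given $p-1 > n(n-1)/2$. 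The paper instead applies Theorem~\ref{thm: generic vanishing in small HT weights} to the contragredient $\rho_\m|_{\GQp}^\vee$: dualising flips the Hodge--Tate weights into $[0,i]$ \emph{and} inverts the inertial type characters simultaneously, which is exactly what makes the determinant condition hold (after the central-character computation, via Eichler--Shimura and the $T(\F_p)$-isotypy, establishes $\det\rho_\m|_{\IQp} = \chibar_1\cdots\chibar_n\,\omega^{-n(n-1)/2}$). Relatedly, you apply the theorem to an irreducible subquotient $\bar\sigma$ of $\rho_\m|_{\GQp}$, but the numerology ($r\le(n-1)/2$, $p>n(n-1)/2+1$, the determinant identity with the factor $n(n-1)/2$) is tied to the dimension of the representation to which the theorem is applied; if $\dim\bar\sigma = d < n$ these conditions change to ones in $d$ which need not hold. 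The theorem must be applied to the full $n$-dimensional $\rho_\m|_{\GQp}^\vee$, and the boundary case $r=(n-1)/2$ is handled by the hypothesis that $\rho_\m|_{\GQp}$ has an irreducible constituent of dimension exceeding one, not by applying the theorem to that constituent.
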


\begin{remark}
{\em In Subsection~\ref{subsec:char} we will show that Hypothesis~\ref{hyp:Galois char}
is satisfied if either $\rho_{\mathfrak m}$ is induced from a character
of $G_K$ for some degree $n$ cyclic Galois extension $K/\mathbb Q$,
or 
if $p\ge n$ and $\SL_n(k) \subseteq \rho_{\mathfrak m}(G_F) \subseteq \Fbar_p^{\times} \GL_n(k)$
for some subfield $k \subset \Fbar_p$.
}
\end{remark}

\begin{remark}
  {\em While we work here with \'etale local systems and \'etale
    cohomology, by virtue of Artin's comparison theorem
    \cite[Exp.\ XI Thm.\ 4.3]{MR0354654} our vanishing results are equivalent to
    vanishing results for the cohomology of the complex
    $U(n-1,1)$-Shimura varieties with coefficients in the corresponding canonical local systems
    for the complex topology.} 
\end{remark}

\noindent {\em Proof of Theorem~\ref{thm:main one}.} Firstly, note that
it suffices to prove vanishing in degree $i\le r$ for all $V$, as
vanishing in degree $i\ge 2(n-1)-r$ then follows by Poincar\'e duality.
(Note that the dual of the canonical local system $\mathcal F_V$ attached
to a representation $V$ is
the canonical local system attached to the contragredient representation
$V^{\vee}$.)

We prove the theorem for $i\le r$ by induction on $i$, the case when
$i < 0$ being trivial.  We begin by reducing to the case when
$\mathcal F_V$ is trivial.  To this end, write $X =X(K)$, let $K'$ be
an open normal subgroup of $K$ of level dividing $N$ such that $V$ is
representation of $K/K'$, and choose an embedding of
$K/K'$-representations $V \hookrightarrow \Fbar_p[K/K']^n$ for
some~$n$; denote the cokernel by $W$. Let $\pi$ denote the projection
$X(K')\to X(K)$. Passing to the associated
canonical local systems, we obtain a short exact sequence $0 \to \cF_V
\to \pi_*\Fbar_p^n \to \cF_W \to 0,$ which gives rise to an exact
sequence of cohomology
$$H^{i-1}_{\et}(X_{\Qbar},\mathcal F_W) \to H^i_{\et}(X_{\Qbar},\mathcal F_V)
\to H^i_{\et}(X_{\Qbar},\pi_*\Fbar_p^n) = H^i_{\et}(X(K')_{\Qbar},\Fpbar)^n.$$
Localizing at $\mathfrak m$ and
applying our inductive hypothesis (with $\mathcal F_W$ in place of $\mathcal F_V$), we reduce to
the case of constant coefficients (with $X(K')$ in place of $X$).  We therefore turn to establishing
the claimed vanishing in this case.

Suppose now that $K'$ is any open normal subgroup of $K$ of level dividing $N$.  
Combining the Hochschild--Serre spectral sequence
$$E_2^{m,n} = H^m_{\et}\bigl(K/K',H^n\bigl(X(K')_{\Qbar},\Fbar_p\bigr)_{\m}\bigr) \implies
H^{m+n}_{\et}\bigl(X(K)_{\Qbar},\Fbar_p\bigr)_{\mathfrak m}$$
with our inductive hypothesis, we find that vanishing of $H^i_{\et}\bigl(X(K'),\Fbar_p\bigr)_{\m}$
implies the vanishing of $H^i_{\et}\bigl(X(K)_{\Qbar},\Fbar_p\bigr)_{\m}$.  Thus,
without loss of generality,
we may and do assume that $K = K_p K^{p}$, 
where $K_p$ is an open normal subgroup of $I_1$,
and $K^p$ is a sufficiently small compact open subgroup
of $G(\A^{\infty,p})$. 

We again consider a Hochschild--Serre spectral sequence, this time
the one relating the cohomology of $X(K)$ and $X(I_1 K^p)$,
which takes the form
$$
E_2^{m,n} = H^m\bigl(I_1/K_p,H^n_{\et}\bigl(X(K)_{\Qbar},\Fbar_p\bigr)_{\m}\bigr) \implies
H^{m+n}_{\et}\bigl(X(I_1 K^p)_{\Qbar},\Fbar_p\bigr)_{\mathfrak m}.$$
Once more taking into account our inductive hypothesis,
we obtain an isomorphism
$H^i_{\et}\bigl(X(I_1 K^p)_{\Qbar},\Fbar_p\bigr)_{\mathfrak m} \iso
H^i_{\et}\bigl(X(K)_{\Qbar},\Fbar_p\bigr)_{\mathfrak m}^{I_1/K_p}.$
Since $I_1/K_p$ is a $p$-group, while $H^i_{\et}\bigl(X(K)_{\Qbar},\Fbar_p\bigr)_{\mathfrak m}$
is a vector space over a field of characteristic $p$,
we see that the latter space vanishes if and only if its space
of $I_1/K_p$-invariants does.  Thus we are reduced to establishing the theorem in the
case when $K = I_1 K^p$.

Now recall that $I_1^* = \F_p^{\times} \times I_1,$ and that the projection onto the first factor
arises from the similitude projection $GU(n-1,1) \to \G_m$.  From this it follows that 
$X(I_1 K^p)$ is isomorphic to the product $X(I_1^*K^p)\times_F \Spec A,$ where 
$A := F[x]/\Phi_p(x)$ (where $\Phi_p(x)$ denotes the $p$th cyclotomic polynomial;
the action of $\F_p^{\times} = I_1^*/I_1$ on $X(I_1 K^p)$ is induced by the action
of $\F_p^{\times}$ on $A$ given by $x \mapsto x^a$, for $a \in \Fp^{\times}$).
Consequently there is an isomorphism of Galois representations
$$H^i\bigl(X(I_1 K^p)_{\Qbar},\Fbar_p\bigr)_{\mathfrak m} \iso 
\oplus_{j = 0}^{p-2} H^i\bigl(X(I_1^* K^p)_{\Qbar},\Fbar_p\bigr)_{\mathfrak m_j} \otimes \omega^j,$$
where $\mathfrak m_j$ is the maximal ideal in $\mathbb T$
which corresponds to the twisted Galois representation $\rho_{\mathfrak m_j} :=
\rho_{\mathfrak m} \otimes \omega^{-j}.$
Since each of the Galois representations $\rho_{\mathfrak m_j}$ satisfies 
the hypotheses of the theorem (as these hypotheses are invariant under 
twisting),
we are reduced to proving the theorem in the case of $X(I_1^* K^p)$.

Consider an irreducible $G_F\times I_0/I_1^*$-subrepresentation of
$H_{\et}^i\bigl(X(I_1^* K^p)_\Qbar,\Fpbar\bigr)_\m$, which we may write in 
the form $\theta\otimes \beta$,
where $\theta$ is an absolutely irreducible $G_F$-representation and
$\beta$ is a character of the abelian group $T(\F_p) = I_0/I_1^*$.
Theorem~\ref{thm: Eichler-Shimura} and Hypothesis~\ref{hyp:Galois char}
taken together imply that $\theta$ is equivalent to $\rho_\m$.
We consider $\beta$ as an
$n$-tuple of characters $\beta_1,\dots,\beta_n$ of $\Fptimes$. Write
$\tilde{\beta}_j$ for the Teichm\"uller lift of $\beta_j$, and $\chi_j$ for 
the character of $\IQp$ given by $\beta_j$. By Lemma~\ref{lem:semistable model}, Remark~\ref{rem:shimura gives a strictly semistable context} and Theorem~\ref{thm:arb cohom
  equivariant}, we see that $\theta$ can be embedded as the
reduction mod $p$ of a potentially semistable representation with
Hodge--Tate weights in the range $[-i,0]$, whose inertial type is a direct
sum of characters
belonging to the collection~$\{\tilde{\beta}_j\}$. (To see this, note
that as $\theta$ is in the $\beta$-part of the cohomology, it
necessarily occurs in the reduction of the $\tilde{\beta}$-part of the
$G_F\times T$-representation provided by Theorem~\ref{thm:arb cohom
  equivariant}.)
 We claim that
$\det\theta|_{\IQp}=\det\rho_\m|_{\IQp}=\chibar_1\cdots\chibar_n\omega^{-n(n-1)/2}$.
Admitting this for the moment, we may apply
Theorem~\ref{thm: generic vanishing in small HT weights} to the
representation $\rho_\m|_{\GQp}^\vee$, and we deduce that
$\rho_\m|_{\GQp}^\vee$ is not $r$-regular. Equivalently, we see that
$\rho_\m|_{\GQp}$ is not $r$-regular, which contradicts our assumptions.

It remains to establish the equality
$\det\theta|_{\IQp}=\det\rho_\m|_{\IQp}=\chibar_1\cdots\chibar_n\omega^{-n(n-1)/2}$. To see this, note
that Theorem~\ref{thm: Eichler-Shimura} implies that for each place
$w\nmid N$ of $F$ such that $w|_\Q$ splits in $F$, we have
$\det\rho_\m(\Frob_w)=(\operatorname{Norm}
w)^{n(n-1)/2}T_w^{(n)}$. Let $\psi_\m$ be the character of
$\A_F^\times/F^\times$ corresponding to the character $\omega^{n(n-1)/2}\det\rho_\m$ by global class
field theory; then we need to prove that $\psi_\m|_{\Z_p^\times}=\beta_1\cdots\beta_n$. The centre of $G(\A_\Q)$ is $\A_F^\times$, so by the definition of
the Shimura variety $X$ there is a natural action of
$\A_F^\times/F^\times$ on
$H_{\et}^i\bigl(X(I_1^* K^p)_\Qbar,\Fpbar\bigr)_\m$.
By the definition of the Hecke operators, we see that if $\varpi_w$ is  a
uniformiser at a place
$w\nmid N$ of $F$ such that $w|_\Q$ splits in $F$, then $\varpi_w$
acts as $T^{(n)}_w$. By the Chebotarev density theorem, we deduce that
the action of $\A_F^\times/F^\times$ on the underlying vector space of $\theta$
(which by definition is a subspace of $H_{\et}^i\bigl(X(I_1^*
K^p)_\Qbar,\Fpbar\bigr)_\m$) is via the character $\psi_\m$. In order to
compute $\psi_\m|_{\Z_p^\times}$, it is thus sufficient to compute the
action of $\Z_p^\times$ on $H_{\et}^i\bigl(X(I_1^*
K^p)_\Qbar,\Fpbar\bigr)_\m$, and in particular sufficient to compute the
action of the Iwahori subgroup $I_0$. Now, since
$\theta$ is assumed to be in the $\beta$-part of the cohomology, $I_0$
acts via the character $\beta$ of $I_0/I_1^*$,
so that $\psi_\m|_{\Z_p^\times}=\beta_1\cdots\beta_n$, as required.
 \qed

\begin{cor}
\label{cor:main one}
Suppose that $\rho_{\mathfrak m}$ satisfies Hypothesis~{\em
  \ref{hyp:Galois char}}, that $\rho_{\mathfrak m\, | G_{\mathbb
    Q_p}}$ is $r$-regular for some $r \leq \min\{ (n-1)/2, p-2\}$,
and, if $r = (n-1)/2$, suppose in addition that $\rho_{\mathfrak m \,
  | G_{\mathbb Q_p}}$ contains an irreducible subquotient of dimension
greater than one.  Then the localisation $H_{\et}^i(X_\Qbar, \Zbar_p)_{\mathfrak
  m}$ vanishes for $i \leq r,$ while $H_{\et}^{r+1}(X_\Qbar,\Zbar_p)_{\mathfrak
  m}$ is torsion-free.
\end{cor}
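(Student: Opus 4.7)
My plan is to derive the integral statement from Theorem \ref{thm:main one}, applied to the trivial canonical local system $\mathcal F_V = \Fpbar$, via the universal coefficient short exact sequence and Nakayama's lemma; once Theorem \ref{thm:main one} is in hand the argument is essentially formal. First I would note that although $\Zbar_p$ is not Noetherian, $H^i_{\et}(X_\Qbar,\Z_p)$ is a finitely generated $\Z_p$-module because $X$ is proper. The image $R$ of $\Z_p[T_w^{(j)}]$ in $\prod_i \End_{\Z_p}\!\bigl(H^i_{\et}(X_\Qbar,\Z_p)\bigr)$ is therefore a finite (in particular, Noetherian and semi-local) $\Z_p$-algebra. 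The contraction $\mathfrak m_0$ of $\mathfrak m$ to $\Z_p[T_w^{(j)}]$ cuts out a maximal ideal of $R$ with residue field a finite extension of $\F_p$, and the localizations $H^i_{\et}(X_\Qbar,\Z_p)_{\mathfrak m_0}$ and $H^i_{\et}(X_\Qbar,\F_p)_{\mathfrak m_0}$ then exist as direct summands of the ambient cohomology. The action of $\Gal(\Fpbar/\F_p)$ on $\Zbar_p[T_w^{(j)}]$ permutes the finitely many maximal ideals lying over $\mathfrak m_0$ transitively and acts compatibly on cohomology; combined with faithful flatness of $\Fpbar$ over $\F_p$, this means that the vanishing $H^i_{\et}(X_\Qbar,\Fpbar)_{\mathfrak m} = 0$ supplied by Theorem \ref{thm:main one} is equivalent to $H^i_{\et}(X_\Qbar,\F_p)_{\mathfrak m_0} = 0$, which therefore holds for $i \leq r$.

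Next, I would feed this into the universal coefficient sequence arising from $0 \to \Z_p \xrightarrow{p} \Z_p \to \F_p \to 0$, namely
$$
0 \to H^i_{\et}(X_\Qbar,\Z_p)_{\mathfrak m_0}/p \to H^i_{\et}(X_\Qbar,\F_p)_{\mathfrak m_0} \to H^{i+1}_{\et}(X_\Qbar,\Z_p)_{\mathfrak m_0}[p] \to 0.
$$
For $i \leq r$ the middle term vanishes, forcing the vanishing of both outer terms. Nakayama's lemma, applied to the finitely generated module $H^i_{\et}(X_\Qbar,\Z_p)_{\mathfrak m_0}$ over the Noetherian local ring $R_{\mathfrak m_0}$, then upgrades the mod-$p$ vanishing to $H^i_{\et}(X_\Qbar,\Z_p)_{\mathfrak m_0} = 0$ for all $i \leq r$, while the right-hand vanishing at $i = r$ says precisely that $H^{r+1}_{\et}(X_\Qbar,\Z_p)_{\mathfrak m_0}$ has no $p$-torsion, i.e.\ is $\Z_p$-torsion-free.

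Finally, since $\Zbar_p$ is flat over $\Z_p$ (being a filtered colimit of the finite free $\Z_p$-modules $\cO_E$ associated to finite extensions $E/\Q_p$), tensoring preserves both vanishing and $p$-torsion-freeness. The module $H^i_{\et}(X_\Qbar,\Zbar_p)_{\mathfrak m}$ is cut out as a direct summand of $H^i_{\et}(X_\Qbar,\Z_p)_{\mathfrak m_0} \otimes_{\Z_p} \Zbar_p$ by the relevant idempotent in the Hecke algebra, so the vanishing for $i \leq r$ and torsion-freeness at $i = r+1$ descend to the claims of the corollary. The only subtle step is keeping the bookkeeping of localizations at $\mathfrak m$ versus $\mathfrak m_0$ straight, together with the identification of $H^i_{\et}(X_\Qbar,\Fpbar) = H^i_{\et}(X_\Qbar,\F_p) \otimes_{\F_p} \Fpbar$ via properness of $X$; no new geometric or $p$-adic Hodge-theoretic input is required beyond Theorem \ref{thm:main one}.
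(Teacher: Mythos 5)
Your argument is correct and reaches the right conclusion, but it takes a considerably longer route than the one the paper intends. The paper's proof deduces the corollary in one stroke from Theorem~\ref{thm:main one} together with the universal coefficient short exact sequence for $\Zpbar$-coefficients,
$$0 \to H^i_{\et}(X_\Qbar,\Zpbar)_\m/\m_{\Zpbar} H^i_{\et}(X_\Qbar,\Zpbar)_\m \to H^i_{\et}(X_\Qbar,\Fpbar)_\m \to H^{i+1}_{\et}(X_\Qbar,\Zpbar)_\m[p] \to 0,$$
and Nakayama's lemma applied directly over $\Zpbar$. Your detour through $\Z_p$-coefficients --- contracting $\m$ to $\m_0\subset\Z_p[T_w^{(j)}]$, appealing to transitivity of the $\Gal(\Fpbar/\F_p)$-action on the maximal ideals over $\m_0$ and faithful flatness of $\Fpbar/\F_p$, and then base-changing back to $\Zbar_p$ --- is driven by the worry that $\Zbar_p$ is not Noetherian. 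That worry is misplaced: Nakayama's lemma holds for any finitely generated module over any commutative local ring, Noetherian or not, and $H^i_{\et}(X_\Qbar,\Zpbar)_\m$ is a finitely generated $\Zpbar$-module, being a direct summand of $H^i_{\et}(X_\Qbar,\Z_p)\otimes_{\Z_p}\Zpbar$. The Galois-conjugation step you interpolate is also a bit more delicate than you let on: to pass from vanishing of $H^i_{\et}(X_\Qbar,\Fpbar)_{\m}$ to vanishing of $H^i_{\et}(X_\Qbar,\F_p)_{\m_0}$ you need vanishing at \emph{all} the conjugates of $\m$ over $\m_0$, which requires observing that the hypotheses of Theorem~\ref{thm:main one} are stable under the action of $\Gal(\Fpbar/\F_p)$ (they are, but this should be said). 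Granting that, your chain of implications --- the $\Z_p$-coefficient universal coefficient sequence, Nakayama over the Noetherian localization $R_{\m_0}$, and flat base change to $\Zbar_p$ --- is sound and yields the statement; it just duplicates, at some length, what the paper accomplishes directly.
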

\begin{proof}This follows at once from Theorem \ref{thm:main one} and
  the short exact sequence
\begin{multline*}
0 \to H_{\et}^i(X_\Qbar,\Zpbar)_\m/\m_{\Zpbar} H_{\et}^i(X_\Qbar,\Zpbar)_\m
\to H_{\et}^i(X_\Qbar,\Fpbar)_\m 
\\
\to H_{\et}^{i+1}(X_\Qbar,\Zpbar)_\m
[\m_{\Zpbar}] \to 0.
\qedhere
\end{multline*}
\end{proof}

\begin{remark}
{\em As already remarked in the introduction, we expect some kind
of mod $p$ analogue of Arthur's conjectures to hold, and so in particular,
we expect that stronger results than Theorem~\ref{thm:main one}
and Corollary~\ref{cor:main one} should hold.  In particular,
if $\mathfrak m$
is any maximal ideal in the Hecke algebra
attached to an irreducible continuous representation $\rho_{\mathfrak m}: G_F
\to \GL_n(\Fbar_p),$
then we expect that the localizations $H^i_{\et}(X_{\Qbar},\cF_V)_{\mathfrak m}$
and $H_{\et}^i(X_{\Qbar},\Zbar_p)_{\mathfrak m}$ should vanish
in degrees $i < n-1$.

On the other hand, it need not be the case that (for example) $H^i_{\et}(X_{\Qbar},\Fbar_p)$
vanishes in all degrees in which $H^i_{\et}(X_{\Qbar},\Qbar_p)$ vanishes.
For example, in the case $n = 3$, for the unitary Shimura varieties that we consider
here, namely those that are associated to division algebras, it is known that
$H^1_{\et}(X_{\Qbar},\Qbar_p) = 0$ \cite[Thm.~15.3.1]{Rog90}. (Under additional
restrictions on the division algebra that is allowed, an analogous result
is known for all values of $n$ \cite[Thm.~3.4]{Clo93}.)   On the other hand,
one can construct examples for which $H^1_{\et}(X_{\Qbar},\Fbar_p) \neq 0,$
and hence for which $H^2_{\et}(X_{\Qbar},\Zbar_p)$ is not torsion-free,
via congruence cohomology.  (See e.g.\ the proof of \cite[Thm.~3.4]{Suh08}.)
The existence of such classes does not contradict our theorems or 
expectations, since congruence cohomology is necessarily Eisenstein (i.e.\
gives rise to Eisenstein systems of Hecke eigenvalues, in the sense
that the associated Galois representation $\rho_\m$ is completely reducible).
}
\end{remark}

\subsection{On the mod $p$ cohomology of certain $U(2,1)$-Shimura varieties}
Let $X := X(K)$ denote a $U(2,1)$-Shimura variety, with $K$ of level dividing $N$,
for some natural number $N$ divisible by $p$, and such that the projection of $K$ to
$\mathbb G(\mathbb A^{p,\infty})$ is sufficiently small.  Let
$\mathcal F_V$ be a canonical local system on $X$, which may be trivialized
at level $N$. 
The results of Section \ref{subsec: vanishing theorems for U(n-1,1)}
are particularly powerful in this case, as we now demonstrate.

\begin{cor}
  \label{cor: main vanishing for U(2,1)}Suppose that $\rho_{\mathfrak
    m}$ satisfies Hypothesis~{\em \ref{hyp:Galois char}}, that
  $\rho_{\m}|_{\GQp}$ is $1$-regular, and that
  $\rho_{\mathfrak m \, | G_{\mathbb Q_p}}$ contains an irreducible
  subquotient of dimension greater than
  one. Then the localisations $H^i_{\et}(X_\Qbar, \cF_V)_{\mathfrak m}$
  vanish for $i\ne 2$.
\end{cor}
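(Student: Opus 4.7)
The plan is to apply Theorem \ref{thm:main one} directly, with $n=3$ and $r=1$. For these parameters, $(n-1)/2 = 1 = r$, so one is in the boundary case of the theorem; the supplementary hypothesis that is imposed in that boundary case---namely, that some irreducible subquotient of $\rho_{\mathfrak m}|_{\GQp}$ has dimension greater than one---is precisely what is hypothesized in the corollary. The remaining hypotheses of the theorem, Hypothesis \ref{hyp:Galois char} on $\rho_{\mathfrak m}$ and $1$-regularity of $\rho_{\mathfrak m}|_{\GQp}$, are imposed verbatim in the corollary, while the theorem's numerical condition $p > n(n-1)/2 + 1 = 4$ is a mild restriction that is implicit in our setup (we are already assuming $p$ is an odd prime).

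With the hypotheses verified, Theorem \ref{thm:main one} yields the vanishing of $H^i_{\et}(X_{\Qbar}, \cF_V)_{\mathfrak m}$ in degrees $i \leq r = 1$ and $i \geq 2(n-1) - r = 3$. Since $X$ is a smooth projective variety of complex dimension $n - 1 = 2$ over $F$, its étale cohomology with $\Fpbar$-coefficients is supported in degrees $0 \leq i \leq 4$ (by Artin comparison, or directly because $2\dim X = 4$). Taken together, these two ranges exhaust every degree except $i = 2$, which is exactly the claim.

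There is no serious obstacle: the corollary is a clean specialisation of Theorem \ref{thm:main one}. The only conceptual point worth emphasising is that the saturation $r = (n-1)/2$ is essential here---if one only had the theorem in the strict range $r < (n-1)/2$, then for $n = 3$ one would be forced to take $r = 0$, yielding vanishing only in degrees $0$ and $4$, which is far weaker than the desired statement. It is precisely the boundary case that makes the additional structural hypothesis on the irreducible subquotients of $\rho_{\mathfrak m}|_{\GQp}$ indispensable in the formulation of the corollary.
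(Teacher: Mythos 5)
Your approach matches the paper's: both specialize Theorem~\ref{thm:main one} to $n=3$, $r=1 = (n-1)/2$, and then close the degree range using $\dim X = 2$. However, your verification of the numerical hypothesis $p > n(n-1)/2 + 1 = 4$ is faulty. You claim this is ``implicit in our setup (we are already assuming $p$ is an odd prime),'' but $p = 3$ is an odd prime that does not satisfy $p > 4$, so oddness alone does not give you the bound. The hypothesis is in fact a consequence of $1$-regularity, which is precisely what the paper's one-line proof observes: by definition, $1$-regularity of the $3$-dimensional representation $\rho_{\m}|_{\GQp}$ requires the $n(r+2) = 9$ residues $a_i + k$ (for $1 \le i \le 3$, $0 \le k \le 2$) to be pairwise distinct in $\Z/p\Z$, which forces $p \ge 9$, hence $p \ge 11 > 4$. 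Once this point is corrected, the rest of your argument—identifying the boundary-case supplementary hypothesis on irreducible subquotients, and deducing that vanishing for $i \le 1$ and $i \ge 3$ leaves only $i=2$ in the support $[0,4]$—is correct and coincides with the paper's.
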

\begin{proof}
  This follows immediately from Theorem \ref{thm:main one}, noting
  that the hypothesis that  $\rho_{\m}|_{\GQp}$
  is $1$-regular implies that $p>4$ (indeed, that $p \geq 11$).
\end{proof}

We now prove a result which does not require the existence of a Galois
representation $\rho_\m$. We begin with a lemma.

\begin{lem}
  \label{lem: Galois action on H^0}If $\m$ is a maximal
  ideal of $\T$ with residue field $\Fpbar$, such that
  $H^0_\et(X_\Qbar,\cF_V)_\m\ne 0$, then there is an abelian representation
  $\rhobar_0:G_F\to\GL_3(\Fpbar)$ such that $\chara\bigl(\rhobar_0(\Frob_w)\bigr) = \sum_{i = 0}^3 (-1)^{i}(\operatorname{Norm} w)^{i(i-1)/2}
T_{w}^{(i)} X^{n-i}\pmod{\m}$ for all split places $w$ of $E$ for which $w \nmid N p$.
\end{lem}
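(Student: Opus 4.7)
My plan is to exploit the fact that $H^0_{\et}$ is supported on the set of geometric connected components of $X$, on which both Galois and Hecke act through abelian quotients; the desired $\rhobar_0$ will then be built as a direct sum of characters from class field theory.

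\textbf{Step 1 (structure of $H^0$).} First I would observe that $X(K)_{\Qbar}$ is a finite disjoint union of geometric connected components, and that by strong approximation for the derived group $G^{\der}$ together with reciprocity, the set $\pi_0\bigl(X(K)_{\Qbar}\bigr)$ is identified with a finite quotient of an id\`ele class group, on which the action of $G_F$ factors through the reciprocity map, hence through the maximal abelian quotient $G_F^{\ab}$. Consequently $H^0_{\et}(X_{\Qbar},\mathcal F_V) = \bigoplus_{\alpha\in\pi_0(X_{\Qbar})} V^{K_\alpha}$ (for appropriate stabilisers $K_\alpha\subset K$) is a $G_F\times \T$-module on which $G_F$ acts through $G_F^{\ab}$. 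In the same way, $G(\A^\infty)$ acts on $\pi_0$ through its similitude/determinant quotient, so every Hecke operator $T_w^{(i)}$ acts on $H^0_{\et}(X_{\Qbar},\mathcal F_V)$ through a scalar depending only on the ``abelian class'' of the corresponding double coset.

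\textbf{Step 2 (produce a character).} Localising at $\m$ (which is assumed to be in the support of $H^0$), and picking an irreducible $G_F\times \T$-subquotient of $H^0_{\et}(X_{\Qbar},\mathcal F_V)_{\m}$, I obtain a character $\chibar_1 : G_F \to \Fpbar^\times$, factoring through $G_F^{\ab}$, on which $\T$ acts via $\m$. This is the single character coming from the $G_F$-action on $\pi_0$.

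\textbf{Step 3 (Eichler--Shimura gives one root).} By Theorem~\ref{thm: Eichler-Shimura}, for each split place $w\nmid Np$ the polynomial $P_w(X):=\sum_{i=0}^3 (-1)^i(\operatorname{Norm} w)^{i(i-1)/2} T_w^{(i)} X^{3-i}$ annihilates $\Frob_w$ on our eigenspace, so $\chibar_1(\Frob_w)$ is a root of $P_w\bmod \m$.

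\textbf{Step 4 (explicit factorisation of $P_w$).} This is the main step. Since the $G(\A^\infty)$-action on $H^0_{\et}(X_{\Qbar},\mathcal F_V)$ factors through an abelian torus quotient, for each split $w\nmid Np$ with residue characteristic $l=w|_{\Q}$ the Hecke operator $T_w^{(i)}$ acts on the $\chibar_1$-eigenspace by an explicit scalar: it is the same scalar by which the corresponding double coset acts on the trivial representation of $\GL_3(\Zl)\times\Zl^\times$, twisted by $\chibar_1$ and by the character through which $I_1^*/I_1$ acts on $V^{K_\alpha}$. A direct calculation (identical to the computation of Satake parameters for the trivial representation with arithmetic normalisation) shows that this makes $P_w(X)\bmod\m$ equal to
\[
(X-\chibar_1(\Frob_w))\bigl(X-(\operatorname{Norm} w)\chibar_1(\Frob_w)\bigr)\bigl(X-(\operatorname{Norm} w)^2\chibar_1(\Frob_w)\bigr),
\]
up to the possible extra twist by a fixed finite-order character~$\eta$ coming from~$\cF_V$ (independent of~$w$). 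I then set
\[
\rhobar_0 := \chibar_1\eta \;\oplus\; \chibar_1\eta\cdot\epsilon_{\mathrm{cyc}} \;\oplus\; \chibar_1\eta\cdot\epsilon_{\mathrm{cyc}}^2,
\]
where $\epsilon_{\mathrm{cyc}}:G_F\to\Fpbar^\times$ is the mod~$p$ cyclotomic character, noting $\epsilon_{\mathrm{cyc}}(\Frob_w)=\operatorname{Norm} w$. This is abelian, and its characteristic polynomial at $\Frob_w$ equals $P_w\bmod\m$.

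\textbf{Main obstacle.} The crux is Step~4: pinning down the exact form of $P_w\bmod\m$, which requires (i)~a precise description of how $T_w^{(i)}$ acts on each factor $V^{K_\alpha}$ of $H^0$ (combining the degree of the Hecke correspondence with the abelian action on $\pi_0$), (ii)~keeping track of the twist introduced by $\mathcal F_V$, and (iii)~matching normalisations with the Eichler--Shimura relation of Theorem~\ref{thm: Eichler-Shimura}. Once the Satake-style factorisation of $P_w$ on $H^0$ is in place, constructing the abelian $\rhobar_0$ is immediate.
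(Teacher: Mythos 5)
The paper dispenses with this lemma by a single citation (``This is standard and follows for example from Section 2.1 of [Ribet]''), so any detailed argument necessarily departs from what is written in the text. Your proposal supplies exactly the kind of argument that is being cited: decompose $H^0$ along geometric connected components, observe that both $G_F$ and the Hecke action on $\pi_0(X_{\Qbar})$ are abelian (by CFT and the similitude/determinant map respectively), extract a character $\chibar_1$ from an irreducible subquotient, and then carry out the Satake-style computation showing that the Hecke polynomial $P_w$ restricted to $H^0$ factors as $\prod_{j=0}^{2}\bigl(X-(\operatorname{Norm} w)^{j}\chibar_1(\Frob_w)\cdot\eta(\Frob_w)\bigr)$. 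The decisive input in Step~4, that the degree of the correspondence for $T_w^{(i)}$ is the $q$-binomial coefficient $\binom{3}{i}_q$ (equal to $1,\,1+q+q^2,\,1+q+q^2,\,1$), matches the coefficients coming from expanding that product against the powers $(\operatorname{Norm} w)^{i(i-1)/2}$ in the definition of $P_w$, so the outline is sound. You correctly identify the bookkeeping in Step~4 as where all the content lives.

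Two small points to watch. First, with the paper's convention (stated in the Conventions section) that $\Frob_w$ always denotes \emph{geometric} Frobenius, the mod~$p$ cyclotomic character satisfies $\omega(\Frob_w)=(\operatorname{Norm} w)^{-1}$, not $\operatorname{Norm} w$; so in the displayed definition of $\rhobar_0$ you should twist by $\omega^{-1}$ and $\omega^{-2}$ rather than by $\omega$ and $\omega^2$ to make the characteristic polynomial come out to $P_w\bmod\m$. Second, the lemma only asserts the \emph{existence} of an abelian $\rhobar_0$ with the prescribed characteristic polynomials; you do not actually need to identify the three characters precisely, only to know that the Satake parameter coming from $H^0$ is ``full'' in the sense that $P_w\bmod\m$ splits into three linear factors which are determined, via Chebotarev, by the values of $\chibar_1$ and powers of the norm character. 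Keeping that in mind would let you shorten Step~4 considerably, since matching each coefficient of $P_w$ exactly to a $q$-binomial identity becomes optional once you know the general structural fact. In summary: the approach is correct and is the standard one the paper is citing; be careful with the geometric-Frobenius normalisation of the cyclotomic twist.
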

\begin{proof}
  This is standard and follows for example from Section 2.1 of \cite{MR546620}.
\end{proof}

\begin{theorem}
\label{thm:main two}
If $\rho$ is a three-dimensional irreducible sub-$G_F$-representation 
of the \'etale cohomology group $H^1_{\et}(X_{\Qbar},\cF_V)$,
then either every irreducible subquotient
of $\rho_{| G_{\mathbb Q_p}}$ is one-dimensional, or else 
$\rho_{| G_{\mathbb Q_p}}$ is not $1$-regular, or else $\rho(G_F)$ is not generated
by its subset of regular elements.
\end{theorem}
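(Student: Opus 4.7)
The plan is to argue by contradiction, following the skeleton of the proof of Theorem~\ref{thm:main one} with $\rho$ itself playing the role of $\rho_{\mathfrak m}$ and condition~(c) replacing Hypothesis~\ref{hyp:Galois char}. Suppose that none of the three alternatives holds: $\rho_{|G_{\mathbb Q_p}}$ admits an irreducible subquotient of dimension exceeding one, $\rho_{|G_{\mathbb Q_p}}$ is $1$-regular, and $\rho(G_F)$ is generated by its regular elements. Since $\rho$ is irreducible, $\T$ acts on it through a character whose kernel is a maximal ideal~$\mathfrak m$ with residue field $\Fpbar$, and $\rho\subset H^1_{\et}(X_{\overline{\mathbb Q}},\cF_V)_{\mathfrak m}$. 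Theorem~\ref{thm: Eichler-Shimura} implies that $\rho(\Frob_w)$ is annihilated by the monic cubic polynomial
\[
P_w(X) := \sum_{i=0}^{3}(-1)^{i}(\operatorname{Norm} w)^{i(i-1)/2}\,\overline{T_w^{(i)}}\,X^{3-i}
\]
for every good split~$w$, where the bar denotes reduction modulo~$\mathfrak m$. For each such $w$ with $\rho(\Frob_w)$ regular, the minimal polynomial of $\rho(\Frob_w)$ has degree three, whence $P_w=\chara\rho(\Frob_w)$; in particular $\det\rho(\Frob_w)=(\operatorname{Norm}w)^{3}\,\overline{T_w^{(3)}}$.

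Let $\psi_{\mathfrak m}\colon\A_F^{\times}/F^{\times}\to\Fpbar^{\times}$ be the character through which the centre of $G(\A_{\mathbb Q})$ acts on $H^{\bullet}_{\et}(X_{\overline{\mathbb Q}},\cF_V)_{\mathfrak m}$, so that $\psi_{\mathfrak m}(\varpi_w)=\overline{T_w^{(3)}}$. By Chebotarev density, hypothesis~(c) guarantees that the $\rho$-images of the regular $\Frob_w$ exhaust the regular elements of $\rho(G_F)$ and therefore generate $\rho(G_F)$; on these Frobenii the two characters $\psi_{\mathfrak m}$ and $\omega^{3}\det\rho$ agree by the preceding display, so multiplicativity of characters forces the global identity $\psi_{\mathfrak m}=\omega^{3}\det\rho$ on all of $G_F$.

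I would then execute the same reductions as in the proof of Theorem~\ref{thm:main one}: the short exact sequence $0\to\cF_V\to\pi_{*}\Fpbar^{n}\to\cF_W\to 0$ reduces to constant coefficients, and iterated Hochschild--Serre spectral sequences together with an $\Fp^{\times}$-isotypic decomposition and twisting bring us to the case $K=I_1^{*}K^{p}$ with $\cF_V=\Fpbar$. At each step the irreducible three-dimensional $\rho$ either descends as a subquotient of the cohomology of the smaller-level variety, or else is forced to embed in an abelian $G_F$-representation (via Lemma~\ref{lem: Galois action on H^0} and the fact that the $E_2^{\geq 1,0}$-terms of the Hochschild--Serre spectral sequence carry a $G_F$-action factoring through that on $H^0$, together with $G_F$-equivariance of the differentials); the latter possibility is excluded by the three-dimensional irreducibility of~$\rho$. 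After these reductions $\rho$ is a subquotient of a single $\beta$-isotypic component of $H^1_{\et}\bigl(X(I_1^{*}K^{p})_{\overline{\mathbb Q}},\Fpbar\bigr)_{\mathfrak m}$ for the $T(\Fp)=I_0/I_1^{*}$-action, for some character $\beta=(\beta_1,\beta_2,\beta_3)$. Applying Theorem~\ref{thm:arb cohom equivariant} to the tame strictly semistable context of Lemma~\ref{lem:semistable model} and Remark~\ref{rem:shimura gives a strictly semistable context}, the $G_F\times T(\Fp)$-representation $\rho$ embeds as a subquotient of the mod~$p$ reduction of a $G_F\times T(\Fp)$-stable lattice in a representation~$V$ whose restriction to a decomposition group at a place of~$L$ above~$\mathfrak p$ is semistable with Hodge--Tate weights in $[-1,0]$, and whose inertial type on the $\tilde\beta$-isotypic piece is a sum of characters drawn from $\{\tilde\beta_1,\tilde\beta_2,\tilde\beta_3\}$. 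The Iwahori computation from the proof of Theorem~\ref{thm:main one} gives $\psi_{\mathfrak m}|_{\Zp^{\times}}=\beta_1\beta_2\beta_3$, and combined with the global identity from the previous paragraph this yields $\det\rho|_{\IQp}=\omega^{-3}\beta_1\beta_2\beta_3$. Dualising and invoking Theorem~\ref{thm: generic vanishing in small HT weights} with $n=3$ and $r=1=(n-1)/2$ --- the bound $p>n(n-1)/2+1$ is automatic since $1$-regularity in dimension three already forces $p\geq 11$, the determinant condition matches by the formula just derived, and the boundary-case assumption is supplied by the first alternative --- we conclude that $\rho_{|G_{\mathbb Q_p}}$ is not $1$-regular, contradicting the second alternative.

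The main obstacle I anticipate is precisely the replacement of Hypothesis~\ref{hyp:Galois char} by the geometric condition~(c). Without that abstract hypothesis, the Eichler--Shimura relation only tells us that the specific polynomial $P_w$ \emph{annihilates} $\rho(\Frob_w)$; the equality $P_w=\chara\rho(\Frob_w)$, which is what is needed to identify the Hecke eigenvalues $\overline{T_w^{(i)}}$ with the elementary symmetric functions of the eigenvalues of $\rho(\Frob_w)$, is only available for those $w$ at which $\rho(\Frob_w)$ is regular. Condition~(c) is exactly what guarantees that these good Frobenii form a generating set for $\rho(G_F)$, allowing one to promote the pointwise identity on them to the global character identity $\psi_{\mathfrak m}=\omega^{3}\det\rho$, which is the essential input for the determinant condition in the final application of Theorem~\ref{thm: generic vanishing in small HT weights}. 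Once this is in hand, every remaining step is a faithful translation of the argument of Theorem~\ref{thm:main one}.
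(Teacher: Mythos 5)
Your argument is essentially correct and follows the paper's proof of Theorem~\ref{thm:main two} quite closely: you correctly identified that the role of Hypothesis~\ref{hyp:Galois char} is played here by the regular-elements hypothesis together with Lemma~\ref{lem:regular implies equal dets, starting with char polys rather than a repn} (whose content you re-derive in your first paragraph to obtain the character identity $\psi_{\mathfrak m}=\omega^{3}\det\rho$), you correctly track the descent through the Hochschild--Serre spectral sequences, and the final invocation of Theorem~\ref{thm: generic vanishing in small HT weights} in the boundary case $r=1=(n-1)/2$ is set up exactly as in the paper, with the ``first alternative'' supplying the extra hypothesis.

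The one place where your execution genuinely diverges is in how the $H^0$-contributions to Hochschild--Serre are killed. You argue that if $\rho$ descended to the $E_2^{\geq 1,0}$-terms it would be a subquotient of an abelian $G_F$-representation (since $G_F$ acts on $E_2^{p,0}=H^p(K/K',H^0)$ through its action on $H^0$), contradicting three-dimensional irreducibility. The paper instead shows directly that $H^0_{\et}\bigl(X(K'),\Fbar_p\bigr)_{\mathfrak m}=0$: this is done by invoking Lemma~\ref{lem: Galois action on H^0} to produce the abelian $\rhobar_0$, Theorem~\ref{thm: Eichler-Shimura} and Chebotarev to see that $\chara(\rhobar_0(g))$ annihilates $\rho(g)$ for all $g\in G_F$, and then Lemma~\ref{lem:kernels} to conclude $\ker\rhobar_0\subset\ker\rho$, hence $\rho$ abelian, contradiction. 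Both work, but note that your version quietly uses a geometric input not explicitly packaged in Lemma~\ref{lem: Galois action on H^0} --- namely that the $G_F$-action on $H^0$ itself (not merely on some auxiliary $\rhobar_0$) is abelian, which is true because $G_F$ acts on $\pi_0$ of the Shimura variety through the reciprocity map, but you should flag that you are using this rather than citing a lemma that does not quite say it. A second (milder) imprecision: the opening assertion that ``$\T$ acts on $\rho$ through a character'' deserves the socle argument the paper gives ($\rho$ has finite image so lands in a finite sum of localizations $H^1_{\mathfrak m}$; irreducibility puts it into a single one; finite length of $\T_{\mathfrak m}$-modules then gives $\Hom_{G_F}(\rho,H^1[\mathfrak m])\neq 0$). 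The conclusion is correct, but the image of an arbitrary $G_F$-embedding $\rho\hookrightarrow H^1$ is not $\T$-stable, so the reasoning as you stated it is too quick.
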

\begin{remark}
{\em  Recall that a square matrix is said to be regular if its minimal and
characteristic polynomials coincide.
In Subsection~\ref{subsec:reg} we will show that $\rho(G_F)$ is generated by its subset
of regular elements if either $\rho_{\mathfrak m}$ is induced from a character
of $G_K$ for some cubic Galois extension  $K/\mathbb Q$, or if
$\rho(G_F)$ contains a regular unipotent element.
}
\end{remark}
\begin{remark}
{\em In the proof of the theorem we use some of the results of
  Section~\ref{sec:group} below.
}
\end{remark}

\noindent {\em Proof of Theorem~\ref{thm:main two}.}
The argument follows similar lines to the proof of Theorem~\ref{thm:main one},
although it is slightly more involved, since we are not giving ourselves
the existence of the Galois representation $\rho_{\m}$.\footnote{In fact,
as noted in the introduction, recent work of Scholze \cite{1306.2070}
implies that $\rho_{\m}$ exists for any maximal ideal
$\m$ in the Hecke algebra.  We have left our argument as originally
written.}  The key point 
will be that in the Hochschild--Serre spectral sequences that appear,
the only other cohomology to contribute besides $H^1$ will be $H^0$,
and for maximal ideals of $\T$ in the support of $H^0$, we do have associated 
Galois representations, by Lemma~\ref{lem: Galois action on H^0}.

We first show that if
$H^0_{\et}\bigl(X_{\Qbar},\mathcal F_W\bigr)_{\mathfrak m} \neq 0$ for some
canonical local system $\mathcal F_W$ on $X$ and some 
maximal ideal $\mathfrak m$ of $\T$,
then $\Hom_{G_F}\Bigl(\rho,H^1_{\et}\bigl(X_{\Qbar},\mathcal F_V\bigr)[\mathfrak m]\Bigr)
= 0$.  To see this,
note that if
$H^0_{\et}\bigl(X_{\Qbar},\mathcal F_W\bigr)_{\mathfrak m} \neq 0$
and $\Hom_{G_F}\Bigl(\rho,H^1_{\et}\bigl(X_{\Qbar},\mathcal F_V\bigr)[\mathfrak m]\Bigr) \neq 0,$
then Lemma~\ref{lem: Galois action on H^0} and Theorem~\ref{thm: Eichler-Shimura} together
imply that there exists an
abelian representation $\rhobar_0:G_F\to\GL_3(\Fpbar)$
such that for all $g\in G_F$, the characteristic polynomial of $\rhobar_0(g)$
annihilates $\rho(g)$. By Lemma \ref{lem:kernels} this implies that
$\rho$ is abelian, which is impossible as $\rho$ is
irreducible.

Now $H^1_{\et}(X_{\Qbar},\cF_V)$ is the direct sum of its localisations 
at the various maximal ideals $\m$ of $\T$, and hence, since $\Hom_{G_F}\bigl(\rho,H^1_{\et}(X,\cF_V)\bigr) \neq 0$
by hypothesis, we see that $\Hom_{G_F}\bigl(\rho,H^1_{\et}(X,\cF_V)_{\m}\bigr) \neq 0$ for some
maximal ideal $\m$ of $\T$.  Since this is a finite length $\T_{\m}$-module, we see that its $\T_{\m}$-socle
$\Hom_{G_F}\bigl(\rho,H^1_{\et}(X,\cF_V)[\m]\bigr)$ must also be non-zero, and hence, by the preceding paragraph,
we conclude that $H^0_{\et}(X,\cF_W)_{\m} = 0$ for any canonical local system $\cF_W$ on $X$.

As in the proof of Theorem~\ref{thm:main one},
choose a short exact sequence of canonical local systems
$0 \to \cF_V \to \pi_* \Fbar_p^n \to \cF_W \to 0,$
for some $\pi:X '\to X$.  Passing to the the long exact sequence
$$ H^0_{\et}(X,\cF_W)_{\m} \to H^1_{\et}(X,\cF_V)_{\m} \to H^1_{\et}(X,\pi_* \Fbar_p)^n = H^1_{\et}(X',\Fbar_p)^n,$$
and using the result of the preceding paragraph, namely that $H^0_{\et}(X,\cF_W)_{\m} = 0$,
we conclude that $\rho$ embeds into $H^1_{\et}(X',\Fbar_p)$.  Thus, replacing $X$ by $X'$,
we reduce to the case when $\cF_V$ is constant, which we assume from now on.

We now suppose that
$\rho$ is a subrepresentation of $H^1_{\et}(X_{\Qbar},\Fbar_p)_\m$.
We will prove that $\rho$ is then necessarily a subrepresentation
of $H^1_{\et}\bigl(X(K^p I_1), \Fbar_p\bigr)_{\m}$, for some
sufficiently small open subgroup $K^p$ of $G(\mathbb A^{\infty,p})$.
The result will then follow from Lemmas~\ref{lem:semistable model}
and~\ref{lem:regular implies equal dets, starting with char polys
    rather than a repn}, and Theorems~\ref{thm:arb cohom equivariant},
\ref{thm: Eichler-Shimura},
and~\ref{thm: generic vanishing in small HT weights}.

As in the proof of Theorem~\ref{thm:main one},
we write $X = X(K)$, and choose a normal open subgroup $K' := K^p K_p$ of $K$,
with $K_p \subset I_1$.
The Hochschild--Serre
spectral sequence associated to the cover $X(K') \to X(K)$
gives rise to an exact sequence
\begin{multline*}
0 \to
H^1\bigl(K/K', H^0\bigl(X(K'),\Fbar_p\bigr)_{\m}\bigr)
\to H^1_{\et}\bigl( X(K), \Fbar_p\bigr)_{\m}
\to H^1_{\et}\bigl(X(K'),\Fbar_p\bigr)^{K/K'}_{\m}  \\
\to 
H^2\bigl(K/K', H^0\bigl(X(K'),\Fbar_p\bigr)_{\m}\bigr).
\end{multline*}
The same argument as above, using 
Lemma~\ref{lem: Galois action on H^0} (applied now with $X(K')$ in place of $X$),
Theorem~\ref{thm: Eichler-Shimura}, and Lemma~\ref{lem:kernels} below,
shows that
$$H^1\bigl(K/K', H^0\bigl(X(K'),\Fbar_p\bigr)_{\m}\bigr) = 
H^2\bigl(K/K', H^0\bigl(X(K'),\Fbar_p\bigr)_{\m}\bigr) = 0.$$
Thus in fact we have an isomorphism
$H^1_{\et}\bigl(X(K),\Fbar_p\bigr) \iso
H^1_{\et}\bigl(X(K'),\Fbar_p\bigr)^{K/K'}_{\m},$
and hence an isomorphism
\begin{multline*}
\Hom_{G_F}\Bigl(\rho,H^1_{\et}\bigl(X(K),\Fbar_p\bigr)\Bigr) \iso
\Hom_{G_F}\Bigl(\rho,H^1_{\et}\bigl(X(K'),\Fbar_p\bigr)^{K/K'}_{\m}\Bigr)
\\
=
\Hom_{G_F}\Bigl(\rho,H^1_{\et}\bigl(X(K'),\Fbar_p\bigr)_{\m}\Bigr)^{K/K'}.
\end{multline*}
In particular, if $\rho$ appears as a subrepresentation of
$H^1_{\et}\bigl(X(K),\Fbar_p)_{\m}$,
then it appears as a subrepresentation of $H^1_{\et}\bigl(X(K'),\Fbar_p)_{\m}.$

Now considering the Hochschild--Serre spectral sequence for the cover $X(K') \to X(K^p I_1)$,
and using the fact that if 
$\Hom_{G_F}\Bigl(\rho,H^1_{\et}\bigl(X(K'),\Fbar_p\bigr)_{\m}\Bigr)\neq 0,$
then also
$\Hom_{G_F}\Bigl(\rho,H^1_{\et}\bigl(X(K'),\Fbar_p\bigr)_{\m}\Bigr)^{I_1/K_p} \neq 0$
(since $I_1/K_p$ is a $p$-group), we conclude that if $\rho$ appears as a subrepresentation
of 
$H^1_{\et}\bigl(X(K'),\Fbar_p)_{\m}$,
then it appears as a subrepresentation of $H^1_{\et}\bigl(X(K^p I_1),\Fbar_p)_{\m}.$

Arguing exactly as in the proof of Theorem~\ref{thm:main one}, we then
deduce that some twist of $\rho$ appears in $H^1_{\et}\bigl(X(K^p
I_1^*),\Fbar_p)$, and so, replacing $\rho$ by this twist, it suffices
to prove that if $\rho$ is an irreducible three-dimensional
representation $\rho$ of $H^1_{\et}\bigl(X(K^p I_1^*),\Fbar_p)$ that
is generated by its regular elements, then either every irreducible
subquotient of $\rho_{| G_{\mathbb Q_p}}$ is one-dimensional, or else
$\rho_{| G_{\mathbb Q_p}}$ is not $1$-regular.  This follows from
Lemma~\ref{lem:semistable model} and
Theorems~\ref{thm:arb cohom equivariant},
\ref{thm: Eichler-Shimura}, and~\ref{thm: generic
  vanishing in small HT weights} exactly as in the proof of Theorem
\ref{thm:main one}, replacing the appeal to Hypothesis \ref{hyp:Galois
  char} with one to Lemma \ref{lem:regular implies equal dets,
  starting with char polys rather than a repn} below.\qed

\medskip

Our other main theorem concerns the weight part of the Serre-type
conjecture of \cite{bib:herzig-thesis} for $U(2,1)$.
It is proved by combining our techniques with
those of \cite{EGH}, where a similar theorem is proved for $U(3)$
(which is simpler, because one has vanishing of cohomology outside of
degree $0$). We begin by recalling some terminology from
\cite{EGH}. We will call an irreducible $\Fpbar$-representation of
$\GL_3(\Fp)$ a {\em Serre weight}. Fix an irreducible representation
$\rhobar:G_F\to\GL_3(\Fpbar)$. Let $X := X(K)$ be a $U(2,1)$-Shimura variety
such that $K$ is of level dividing $N$ and has sufficiently small
projection to $\G(\A^{p,\infty})$,
where now we assume that $(N,p)=1$.  Assume furthermore that $\rhobar$ is
unramified at all places not dividing $Np$, and define a maximal
ideal $\m$ of $\T$ with residue field $\Fpbar$ by demanding that for
each place $w\nmid Np$ of $F$ such that $w|_\Q$ splits in $F$, the
characteristic polynomial of $\rhobar(\Frob_w)$ is equal to the
reduction modulo $\m$ of $\sum_{i = 0}^3
(-1)^{i}(\operatorname{Norm} w)^{i(i-1)/2} T_{w}^{(i)} X^{n-i}$.

Let $V$ be a Serre weight; since $(N,p)$=1, we may write $K=K_pK^p$ where $K_p\subset
G(\Qp)\cong\GL_3(\Qp)\times\Qptimes$ is conjugate to
$\GL_3(\Zp)\times\Zptimes$, and we may regard $V$ as a representation
of $K_p$ via the projection $\GL_3(\Zp)\onto\GL_3(\Fp)$. As usual, write $\cF_V$
for the canonical local system associated to $V$. We say that
$\rhobar$ is {\em modular of weight $V$} if for some $N$, $X$ as above
and for some $0\le i\le 4$ we
have \[H_{\et}^i(X_\Qbar,\cF_{V})_\m\ne 0.\] 

Assume now that $\rhobar|_{\GQp}$ is irreducible. Definition 6.2.2 of
\cite{EGH} defines what it means for a Serre weight to be (strongly)
generic, and Section~5.1 of \cite{EGH} (using the recipe of
\cite{bib:herzig-thesis}) defines a set $W^?(\rhobar)$ of Serre
weights in which it is predicted that $\rhobar$ is modular. Let
$\Wgen(\rhobar)$ be the set of generic weights for which $\rhobar$ is
modular.

\begin{theorem}\label{thm:main EGH}Suppose that $\rhobar$ satisfies Hypothesis
\ref{hyp:char} below, and that $\rhobar|_{\GQp}$ is irreducible and
$1$-regular. Suppose that $\rhobar$ is modular of some strongly
generic weight. Then $\Wgen(\rhobar)=W^?(\rhobar)$. In fact, for each
$V\in\Wgen(\rhobar)$, we have  \[H_{\et}^i(X_\Qbar,\cF_{V})_\m\ne 0\]
if and only if $i=2$ and $V\in W^?(\rhobar)$.\end{theorem}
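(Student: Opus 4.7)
The plan is to reduce the theorem to the setting of \cite{EGH} (which treats definite $U(3)$) by using the vanishing machinery of Subsection~\ref{subsec: vanishing theorems for U(n-1,1)} to collapse the cohomology of $X$ to a single degree. The hypotheses of the theorem are precisely those required by Corollary~\ref{cor: main vanishing for U(2,1)}: we are assuming Hypothesis~\ref{hyp:char}, that $\rhobar|_{\GQp}$ is $1$-regular, and that $\rhobar|_{\GQp}$ is irreducible, so in particular (since it is $3$-dimensional) it contains an irreducible subquotient of dimension greater than one. Thus $H^i_{\et}(X_\Qbar,\cF_V)_\m = 0$ for $i\ne 2$, which immediately yields the ``only if'' direction of the second statement for degrees $i\ne 2$.

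Next, I would show the inclusion $\Wgen(\rhobar) \subseteq W^?(\rhobar)$. For a generic Serre weight $V$, nonvanishing of $H^2_{\et}(X_\Qbar,\cF_V)_\m$, combined with Theorem~\ref{thm:arb cohom equivariant} applied in the tame strictly semistable context provided by Lemma~\ref{lem:semistable model} and Remark~\ref{rem:shimura gives a strictly semistable context}, produces an embedding of $\rhobar|_{\GQp}^{\vee}$ into the reduction modulo $p$ of a lattice in a potentially semistable representation with Hodge--Tate weights in $[-2,0]$ and tame inertial type controlled by the highest weight of $V$. Coupling this with the Eichler--Shimura relation (Theorem~\ref{thm: Eichler-Shimura}) and the determinant computation already carried out in the proof of Theorem~\ref{thm:main one}, I obtain constraints on the Breuil modules with descent data which can describe $\rhobar|_{\GQp}$, of exactly the kind analysed in Section~\ref{sec:Breuil}. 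This is the input required to run the argument of \cite{EGH} bounding the set of modular weights by the recipe of \cite{bib:herzig-thesis}, so the inclusion $\Wgen(\rhobar) \subseteq W^?(\rhobar)$ follows.

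For the reverse inclusion $W^?(\rhobar) \subseteq \Wgen(\rhobar)$, I would invoke the existence hypothesis (that $\rhobar$ is modular of some strongly generic weight) and run the weight cycling arguments of \cite{EGH}, which, starting from a single modular weight, produce all weights in $W^?(\rhobar)$ as modular by combining congruences between automorphic forms of different weights with the local Breuil module analysis recalled in Section~\ref{sec:Breuil}. In \cite{EGH} these arguments are applied to $H^0$ of a zero-dimensional $U(3)$ Shimura variety; in our setting the vanishing provided by Corollary~\ref{cor: main vanishing for U(2,1)} means that $H^2_{\et}(X_\Qbar,\cF_V)_\m$ plays an entirely analogous role, and the same manipulations apply.

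The main obstacle will be checking that the arguments of \cite{EGH}, which are set up for definite unitary groups and hence for $H^0$ of zero-dimensional Shimura varieties, really do transfer to $H^2$ of our two-dimensional Shimura varieties once the vanishing theorem is in place. Concretely, one must verify that the Hecke module structure on $H^2_{\et}(X_\Qbar,\cF_V)_\m$ interacts with changes of the Serre weight $V$ in the same way as the corresponding structure on the space of algebraic automorphic forms does in \cite{EGH}, and that no step of the weight cycling or local-global compatibility argument implicitly uses zero-dimensionality beyond what is already absorbed into the vanishing result.
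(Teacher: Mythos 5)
Your overall strategy is the right one: apply Corollary~\ref{cor: main vanishing for U(2,1)} (whose hypotheses are indeed satisfied, since an irreducible $3$-dimensional $\rhobar|_{\GQp}$ certainly has a subquotient of dimension $>1$) to concentrate the $\m$-localised cohomology in degree $2$, and then import the weight-cycling machinery of \cite{EGH}. However, two aspects of your write-up differ from the paper's proof and should be flagged.

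First, your middle paragraph, in which you re-derive local constraints on $\rhobar|_{\GQp}$ via Theorem~\ref{thm:arb cohom equivariant}, Eichler--Shimura, and the Breuil module analysis of Section~\ref{sec:Breuil}, is extra work that the paper does not redo here: all of that is already encapsulated inside \cite[Thm.~6.2.3]{EGH}. The role of Theorem~\ref{thm:arb cohom equivariant} and Section~\ref{sec:Breuil} in this paper is to prove the vanishing theorem (Theorem~\ref{thm:main one} / Corollary~\ref{cor: main vanishing for U(2,1)}), not to reprove the weight-elimination and weight-cycling steps of \cite{EGH}.

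Second, and more importantly, what you identify as the ``main obstacle'' --- verifying that \cite{EGH}'s arguments for $H^0$ of a definite $U(3)$-Shimura variety transfer to $H^2$ of our $U(2,1)$-Shimura variety --- is precisely the substance of the proof, and you leave it unresolved. The paper resolves it not by re-running weight cycling, but by observing that \cite{EGH} provides an axiomatic framework: by \cite[Thm.~4.3.3, Thm.~6.2.3]{EGH} it suffices to define appropriate modules $S$ and $\tS$ and verify Axioms~\~{A}1--\~{A}3 of \cite[\S4.3]{EGH}. The paper defines $S$ and $\tS$ via (completed) degree~$2$ \'etale cohomology localised at $\m$, and then verifies the axioms using the Hochschild--Serre spectral sequence together with the vanishing result, the torsion-freeness of $H^2_\et(\,\cdot\,,\cO_E)_\m$ (as in the proof of Corollary~\ref{cor:main one}), and local--global compatibility for the Galois representations attached to automorphic forms on $U(2,1)$ (modulo a base-change input that the paper acknowledges is not fully in the literature). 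In particular one must check that the definition of ``modular'' used here agrees with \cite[Def.~4.2.2]{EGH}, which amounts to $(S\otimes V)^{K_p(0)} = H^2_\et(X(K_p(0)K^p)_{\Qbar},\cF_V)_\m$, and that $\tS\otimes_{\Zpbar}\Fpbar = S$; both follow from Hochschild--Serre once vanishing is in hand. If you want your proof to be complete, you should carry out these verifications rather than merely flagging the concern.
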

\begin{proof} By the definition of $\m$, the representation $\rhobar$
  satisfies the defining properties of the representation $\rho_\m$
  considered in Section \ref{subsec: vanishing theorems for
    U(n-1,1)}. Applying Corollary \ref{cor: main vanishing for
    U(2,1)}, we see that for any Serre weight $V$, we have
  $H_{\et}^i(X_\Qbar,\cF_V)_\m=0$ if $i\ne 2$. We will now deduce the
  result from Theorem 6.2.3 of \cite{EGH} (taking
$\rbar$ there to be our~$\rhobar$). By Theorem 4.3.3 of
  \cite{EGH}, we see that it suffices to show that we can define $S$
  and $\tS$ as in Section~4 of \cite{EGH} so that Axioms
  \~{A}1--\~{A}3 of Section~4.3 of \cite{EGH} are satisfied. Following
  \cite{EGH}, we define $S$ and $\tS$ using completed cohomology in
  the sense of \cite{MR2207783} (in \cite{EGH} the use of completed
  cohomology was somewhat disguised, but the constructions with
  algebraic modular forms in \cite{EGH} are equivalent to the use of
  completed cohomology of $U(3)$ in degree $0$). In fact, given our
  vanishing results the verification of the axioms of \cite{EGH}
  is very similar to that carried out for $U(3)$ in \cite{EGH}, and we
  content ourselves with sketching the arguments. 

From now on we regard
  the prime-to-$p$ level structure $K^p$ of $X$ as fixed, and we will
  vary $K_p$ in our arguments. We will write $K_p(0)$ for
  $\GL_3(\Zp)\times\Z_p^\times\subset G(\Qp)$. We fix a sufficiently large extension $E/\Qp$ with ring
  of integers~$\cO_E$, residue field~$k_E$, and uniformiser~$\varpi_E$, and we
  define \[S:=\varinjlim_{K_p}H^2_\et(X(K^pK_p)_\Qbar,\Fpbar)_\m,\] \[\tS:=((\varprojlim_s\varinjlim_{K_p}H^2_\et(X(K^pK_p)_\Qbar,\cO_E/\varpi_E^s)_\m)\otimes_{\cO_E}\Zpbar)^\lalg\]
  (that is, the locally algebraic vectors in the localisation at $\m$
  of the completed cohomology of degree $2$). Using the
  Hochschild--Serre spectral sequence and the vanishing of
  $H_{\et}^i(X_\Qbar,\cF_V)_\m=0$ if $i\ne 2$, it is straightforward
  to verify Axioms \~{A}1--\~{A}3 of Section~4.3 of \cite{EGH}, as we
  now explain. 

Firstly, we need to check that our definition of
  ``modular'' is consistent with that of Definition 4.2.2 of
  \cite{EGH}. This amounts to showing that for any Serre weight~$V$,
  \[(S\otimes_\Fpbar
  V)^{K_p(0)}=H^2_\et(X(K_p(0)K^p)_\Qbar,\cF_{V})_\m.\] To see this, note
  that since any sufficiently small $K_p$ acts trivially on $V$, we have \begin{align*}S\otimes_\Fpbar
    V&=\varinjlim_{K_p}H^2_\et(X(K^pK_p)_\Qbar,\Fpbar)_\m\otimes V\\ &\isoto
    \varinjlim_{K_p}H^2_\et(X(K^pK_p)_\Qbar,\cF_V)_\m,\end{align*} so it is
  enough to check that for all compact open subgroups $K_p\subset K_p(0)$ we have
  \[H^2_\et(X(K^pK_p)_\Qbar,\cF_V)_\m^{K_p(0)}=H^2_\et(X(K^pK_p(0))_\Qbar,\cF_V)_\m,\]
  which is an easy consequence of the Hochschild--Serre spectral sequence and our vanishing
  result. We also need an embedding $S\into\tS\otimes_\Zpbar\Fpbar$
  which is compatible with the actions of $\GL_3(\Qp)$ and the Hecke
  algebra. In fact, it is easy to see that we have
  $\tS\otimes_\Zpbar\Fpbar=S$. For example, there is a natural isomorphism
  $$H_{\et}^3(X_\Qbar,\cO_E)_\m/\varpi_EH_{\et}^3(X_\Qbar,\cO_E)_\m =
  H_{\et}^3(X_\Qbar,k_E)_\m,$$
and hence the vanishing of
  $H_{\et}^3(X_\Qbar,k_E)_\m$ implies that of
the finitely generated $\cO_E$-module
  $H_{\et}^3(X_\Qbar,\cO_E)_\m$. One then sees that for
  all $s$ we have a natural isomorphism
  $$H_{\et}^2(X_\Qbar,\cO_E)_\m/\varpi_E^sH_{\et}^2(X_\Qbar,\cO_E)_\m
=H_{\et}^2(X_\Qbar,\cO_E/\varpi_E^s)_\m,$$
  from which the claim follows easily.

 We now examine Axiom \~{A}1. We must show that if $\tV$ is a finite
 free $\Zpbar$-module with a locally algebraic action of $K_p(0)$
 (acting through $\GL_3(\Zp)$),
 then $(\tS\otimes_\Zpbar\tV)^{K_p(0)}$ is a finite free
 $\Zpbar$-module, and for $A=\Qpbar$, $\Fpbar$
 we have \[(\tS\otimes_\Zpbar\tV)^{K_p(0)}\otimes_\Zpbar
 A=(\tS\otimes_\Zpbar\tV\otimes_\Zpbar A)^{K_p(0)}.\] This is
 straightforward, the key point being that if $\cF_{\tV}$ denotes the
 lisse \'etale sheaf attached to $\tV$, then a straightforward
 argument with Hochschild--Serre as above
 gives \[(\tS\otimes_\Zpbar\tV)^{K_p(0)}=H^2_\et(X(K^pK_p(0))_\Qbar,\cF_{\tV})_\m,\]which
 is certainly a finite free $\Zpbar$-module (it is torsion-free by the
 proof of Corollary~\ref{cor:main one}).

The verification of Axioms \~{A}2 and \~{A}3 
is now exactly the same as in
Proposition 7.4.4 of \cite{EGH}, as the Galois representations
occurring in the localised cohomology module
$H^2_\et(X(K^pK_p(0))_\Qbar,\cF_{\tV})_\m$ are associated
to automorphic forms exactly as in \cite{EGH}.\footnote{In the
  interests of full disclosure, we are not aware of a reference in the
literature giving the precise base change result from $U(2,1)$ to
$\GL_3$ that we need, but it seems to be well known to the experts,
and will follow from the much more general work in progress of Mok and
Kaletha--Minguez--Shin--White.} (In fact,
at least for Axiom \~{A}2 this is a rather roundabout way of
proceeding, as the Galois representations in question are constructed
in \cite{ht} by using $H^2_\et(X(K^pK_p(0))_\Qbar,\cF_{\tV})$, and one
can read off the required properties directly from the comparison
theorems of $p$-adic Hodge theory. For Axiom \~{A}3 we are not aware
of any comparison theorems in sufficient generality, so it is
necessary at present to take a lengthier route through the theory of
automorphic forms.)
\end{proof}

\section{Group theory lemmas}
\label{sec:group}
The theorems of Section~\ref{sec:Shimura} contain certain hypotheses on the
Galois representations involved.  Our goal in 
this section is to establish some group-theoretic lemmas which give
sufficient criteria for 
these hypotheses to be satisfied.
Throughout the section $G$ is a finite group, and $k$ is an algebraically closed field of
characteristic~$p$.
For any square matrix $A$ with entries in~$k$,
we write $\chara(A)$ to denote the characteristic polynomial of~$A$.

\subsection{Characterising representations by their characteristic polynomials}
\label{subsec:char}
Let $\rho:G \to \GL_n(k)$ be an irreducible representation.  In this
subsection we establish some criteria for $\rho$ to satisfy the
following hypothesis:

\begin{hyp}
\label{hyp:char}
{\em If $\theta:G \to \GL_m(k)$ is irreducible,
and if
$\chara\bigl(\rho(g)\bigr)$ 
annihilates $\theta(g)$ for every $g \in G$,
then $\theta$ is 
equivalent to $\rho$.
}
\end{hyp}

\begin{remark}
{\em Any irreducible $\rho$ of dimension $2$ satisfies Hypothesis~\ref{hyp:char},
as was proved by Mazur (see the proof of Proposition 14.2 of \cite{MR488287}).  However, it is not satisfied in general if the
dimension $n$ of $\rho$ is greater than $2$ (for instance, this
already fails if $\rho$ is the irreducible 3-dimensional
representation of $A_4$, cf.\ Section~5 of \cite{MR1094193} as well as Remark~\ref{rem:A4} below).
}
\end{remark}

\begin{lemma}
\label{lem:kernels}
Let $\rho:G \to \GL_n(k)$ and $\theta:G \to GL_m(k)$ be two
representations.  If
$\theta$ is irreducible,
and if
$\chara\bigl(\rho(g)\bigr)$ 
annihilates $\theta(g)$ for every $g \in G$,
then the kernel of $\theta$ contains the kernel of $\rho$.
\end{lemma}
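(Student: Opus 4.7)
The plan is to show that $\theta$ is trivial on $H := \ker(\rho)$ via a fixed-vector argument. Let $W = k^m$ denote the underlying space of $\theta$.

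First, observe that for any $h \in H$ we have $\rho(h) = I$, so $\chara(\rho(h)) = (X-1)^n$. The hypothesis then gives $(\theta(h) - I)^n = 0$, i.e.\ $\theta(h)$ is unipotent. Thus $\theta(H)$ is a finite subgroup of $\GL(W)$ consisting entirely of unipotent elements. Since any unipotent element $u$ satisfies $(u-I)^n = 0$ and $k$ has characteristic $p$, we have $u^{p^r} = I$ for $p^r \geq n$, so every element of $\theta(H)$ has order a power of $p$; by Cauchy's theorem, $\theta(H)$ is a finite $p$-group.

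Next, by Kolchin's theorem (equivalently, because a finite $p$-group of $k$-linear automorphisms of a vector space over a field of characteristic $p$ necessarily has a nonzero fixed vector), the subspace
\[
W^H := \{v \in W : \theta(h) v = v \text{ for all } h \in H\}
\]
is nonzero. Because $H$ is normal in $G$, the subspace $W^H$ is $G$-stable: for $g \in G$, $h \in H$, and $v \in W^H$, the element $g^{-1}hg$ again lies in $H$, so
\[
\theta(h)\theta(g)v = \theta(g)\theta(g^{-1}hg)v = \theta(g)v.
\]
Finally, since $\theta$ is irreducible and $W^H$ is a nonzero $G$-stable subspace, we must have $W^H = W$; that is, $\theta|_H$ is trivial, so $H \subseteq \ker(\theta)$, as desired.

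The argument is quite short; the only mild subtlety is the passage from ``every $\theta(h)$ is unipotent'' to a common $H$-fixed vector, which is where the characteristic-$p$ hypothesis (via Kolchin or the $p$-group fixed-point lemma) enters. No further tools (e.g.\ Clifford theory) are needed.
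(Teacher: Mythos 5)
Your proof is correct and follows essentially the same route as the paper's: show $\theta$ maps $\ker(\rho)$ to unipotent elements, deduce it is a (normal) $p$-group, invoke the fixed-point lemma for $p$-groups in characteristic $p$ to get a nonzero invariant subspace, and conclude by irreducibility. The paper phrases the argument in terms of the normal subgroup $\theta(\ker(\rho))$ of $\theta(G)$ rather than directly in terms of $\ker(\rho)$, but this is only a cosmetic difference.
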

\begin{proof}
If $\rho(g)$ is trivial, then the assumption implies that every eigenvalue of $\theta(g)$
is equal to $1$, and hence that $\theta(g)$ is unipotent, and so of order a power of $p$.
Thus the image of $\ker(\rho)$ under $\theta$ is a normal subgroup $H$ of $\theta(G)$
of $p$-power order, and
we see that the space of invariants $(k^m)^H$ is a non-trivial subspace of $k^m$.
Since $H$ is normal in $\theta(G)$,
we see that $\theta(G)$ leaves $(k^m)^H$ invariant, and hence,
since $\theta$ is assumed to be irreducible, we see that in fact
$(k^m)^H = k^m$.  Thus $H$ is trivial, which is to say that $\ker(\rho)\subset
\ker(\theta)$, as claimed.
\end{proof}

\begin{lemma}
\label{lem:characters}
If $\rho:G \to \GL_n(k)$ is a direct sum of one-dimensional characters of $G$, and if
$\theta(g): G \to \GL_m(k)$ is an irreducible representation of $G$
such that 
$\chara\bigl(\rho(g)\bigr)$ 
annihilates $\theta(g)$ for every $g \in G$,
then $m = 1$, so that $\theta$ is a character, and every element of $G$
lies in the kernel of at least one of the summands of $\rho\otimes\theta^{-1}$. 
\end{lemma}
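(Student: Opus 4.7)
The plan is to reduce to a tractable special case by twisting. First, I would set $\psi := \theta \otimes \chi_1^{-1}$ and $\eta_i := \chi_i \chi_1^{-1}$ (so that $\eta_1$ is trivial); then $\psi$ is still an irreducible $m$-dimensional representation, and the hypothesis transforms, by pulling the non-zero scalar $\chi_1(g)^n$ out of the original product, into the identity $\prod_{i=1}^n (\psi(g) - \eta_i(g) \cdot I) = 0$ for every $g \in G$.

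Next I would introduce the normal subgroup $H := \bigcap_{i=2}^n \ker(\eta_i) \trianglelefteq G$. For any $g \in H$, every $\eta_i(g)$ equals $1$, so the twisted condition specialises to $(\psi(g) - I)^n = 0$; thus $\psi(g)$ is unipotent, and in particular has order a power of $p$. Hence $\psi(H)$ is a finite $p$-subgroup of $\GL(V)$, where $V = k^m$. Since we are in characteristic $p$, the fixed subspace $V^{\psi(H)}$ is non-zero; and since $H$ is normal in $G$, this subspace is $\psi(G)$-stable. Irreducibility of $\psi$ then forces $V^{\psi(H)} = V$, which is to say $\psi$ is trivial on $H$. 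Consequently $\psi$ factors through $G/H$, which is abelian because it embeds into $(k^\times)^{n-1}$ via $(\eta_2, \ldots, \eta_n)$. Every irreducible representation of a finite abelian group over the algebraically closed field $k$ is one-dimensional, so $m = 1$.

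The second assertion is then essentially immediate: once $\theta$ is known to be a character, the hypothesis $\prod_i (\theta(g) - \chi_i(g)) = 0$ becomes an equation in the field $k$, forcing $\theta(g) = \chi_i(g)$ for some $i = i(g)$; equivalently, $g \in \ker(\chi_i \theta^{-1})$, which is the kernel of one of the summands of $\rho \otimes \theta^{-1}$. The main technical step is the $p$-group fixed-point argument used to deduce that $\psi$ kills $H$; everything else is bookkeeping about twists and elementary character theory of abelian groups.
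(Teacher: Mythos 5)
Your proof is correct and follows essentially the same route as the paper: both reduce the problem to showing that $\theta$ factors through an abelian quotient of $G$, via the observation that a suitable normal subgroup maps under $\theta$ to unipotent matrices, hence to a normal $p$-subgroup of the image, which must act trivially by the $p$-group fixed-point argument in characteristic $p$ together with irreducibility. The paper packages that step as Lemma~\ref{lem:kernels} (applied with $[G,G]\subset\ker\rho$) rather than inlining it, and omits your initial twist by $\chi_1^{-1}$, which is in any case superfluous---taking $H=\ker\rho$ directly would serve just as well.
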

\begin{proof}
Since $\rho$ is a direct sum of characters, it factors through $G^{\ab}$.
Lemma~\ref{lem:kernels} then shows that $\theta$ also factors through $G^{\ab}$.
Since $\theta$ is also assumed to be irreducible, we find that $\theta$ must be
a character. 
Twisting by $\rho$ by $\theta^{-1}$, we may in fact assume
that $\theta$ is trivial, and writing $\rho = \chi_1\oplus \cdots \oplus \chi_n,$
we find that for each $g \in G$, the value $\chi_i(g)$ is equal to $1$ for
at least one value of $i$ (since $\chara\bigl(\rho(g)\bigr) = \bigl(X-\chi_1(g)\bigr)
\cdots \bigl(X -\chi_n(g)\bigr)$ annihilates $\theta(g) = 1$).  Thus
$G$ is equal to the union of its subgroups $\ker(\chi_i)$.
\end{proof}

\begin{remark}
{\em  In the context of the preceding proposition, we can't conclude in general
that $\theta$ coincides with one the summands of $\rho$.  E.g.\ if $G$ denotes
the Klein four group, if $p$ is odd, and if $\rho$ denotes the three-dimensional
representation obtained by taking the direct sum of the three non-trivial characters
of $G$, then taking $\theta$ to be the trivial representation,
the hypotheses of the proposition are satisfied, but $\theta$ is certainly
not one of the summands of $\rho$.
}
\end{remark}

\begin{lemma}
\label{lem:induction}
If $\rho:G \to \GL_n(k)$ is irreducible, and is isomorphic to an 
induction $\Ind_H^G \psi,$ where $H$ is a cyclic normal subgroup of $G$ of index $n$
and $\psi: H \to k^{\times}$ is a character,
then $\rho$ satisfies Hypothesis~{\em \ref{hyp:char}}.
\end{lemma}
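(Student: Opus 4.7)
The plan is to restrict the hypothesis to $H$ and exploit the fact that every irreducible representation of the abelian group $H$ is one-dimensional. Let $\theta: G \to \GL_m(k)$ be irreducible such that $\chara\bigl(\rho(g)\bigr)$ annihilates $\theta(g)$ for every $g \in G$. Choose an irreducible $H$-subrepresentation of $\theta_{| H}$; since $H$ is abelian this subrepresentation is one-dimensional, given by some character $\chi: H \to k^{\times}$. Fixing coset representatives $g_1,\dots,g_n$ for $G/H$, the restriction $\rho_{| H}$ is isomorphic to $\bigoplus_{i=1}^n \psi^{g_i}$ (where $\psi^{g_i}(h) := \psi(g_i^{-1} h g_i)$), so for each $h \in H$ we have $\chara\bigl(\rho(h)\bigr) = \prod_{i=1}^n \bigl(X - \psi^{g_i}(h)\bigr)$. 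Applied to the eigenvalue $\chi(h)$ of $\theta(h)$, the annihilation hypothesis forces $\chi(h) = \psi^{g_i}(h)$ for some index $i$ depending on $h$.

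The key step is then to upgrade this pointwise equality to a global equality of characters. Setting $H_i := \ker\bigl(\chi \cdot (\psi^{g_i})^{-1}\bigr)$, the previous paragraph shows that $H$ is the union of the subgroups $H_1,\dots,H_n$. Because $H$ is cyclic, any generator of $H$ must lie in some $H_i$, whence $H_i = H$; that is, $\chi = \psi^{g_i}$ for some~$i$. (The appeal to cyclicity here is essential; in the two-dimensional Klein-four example, this step would fail.)

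Finally, I would conclude by a Frobenius reciprocity argument. Since $H$ has finite index in $G$, induction is both left and right adjoint to restriction, so the embedding $\psi^{g_i} = \chi \hookrightarrow \theta_{| H}$ yields a nonzero $G$-equivariant map $\Ind_H^G \psi^{g_i} \to \theta$. Since $H$ is normal in $G$, one checks directly (via the isomorphism $x\otimes v \mapsto x g_i \otimes v$ on $k[G] \otimes_{k[H]} k$) that $\Ind_H^G \psi^{g_i} \cong \Ind_H^G \psi = \rho$. Thus we obtain a nonzero map $\rho \to \theta$, and as both representations are irreducible, Schur's lemma forces this map to be an isomorphism, giving $\theta \cong \rho$ as desired. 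The only potentially subtle point in the argument is the bookkeeping around Frobenius reciprocity in characteristic $p$, but this is handled cleanly by using induction as a two-sided adjoint rather than by passing through characters (which would be unavailable in modular settings where $p$ divides $|H|$).
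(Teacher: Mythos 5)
Your proof is correct and takes essentially the same route as the paper: restrict $\theta$ to $H$, pick off a character $\chi$ appearing there, show via the annihilation hypothesis that $H$ is the union of the kernels of $\chi\cdot(\psi^{g_i})^{-1}$, invoke cyclicity of $H$ to conclude $\chi = \psi^{g_i}$ for some $i$, and finish with Frobenius reciprocity plus Schur. The only cosmetic difference is that you inline the content of the paper's Lemma~\ref{lem:characters} (and use the fact that irreducibles of an abelian group over the algebraically closed field $k$ are one-dimensional directly, rather than routing through Lemma~\ref{lem:kernels}), which is a harmless and arguably cleaner repackaging.
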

\begin{proof}
The restriction $\rho|_H$ is isomorphic to the direct sum $\oplus_{g \in G/H} \psi^g.$
If we let $\theta'$ be a Jordan--H\"older constituent of the restriction $\theta_{| H}$,
then Lemma~\ref{lem:characters} (applied to the representations $\rho_{|H}$ and
$\theta'$ of $H$) implies that $\theta'$ is a character of $H$ and (because $H$
is cyclic) that $\theta' = \psi^g$ for some $g \in G/H$.
The $H$-equivariant inclusion $\psi^g = \theta' \to \theta_{| H}$ then induces a non-zero
$G$-equivariant map
$\rho = \Ind_H^G \psi = \Ind_H^G \psi^g  \to \theta,$ 
which must be an isomorphism, since both its source and target are irreducible by assumption.
This proves the lemma.
\end{proof}

\begin{remark}
\label{rem:A4}
{\em If we take $G = A_4$ and $H$ to be the normal subgroup of $G$ of order four (so that
$H$ is a Klein four group), then the induction of any non-trivial character of $H$
gives an irreducible representation $\rho:G \to \mathrm{SO}_3(k)$.  For every $g \in G$,
the characteristic polynomial of $\rho(g)$ thus has $1$ as an eigenvalue, and so 
if $\theta$ denotes the trivial character of $G$, the element $\theta(g)$ is annihilated
by $\chara\bigl(\rho(g)\bigr)$ for every $g \in G$.  Thus the analogue of Lemma~\ref{lem:induction}
does not hold in general if $H$ is not cyclic.
}
\end{remark}

We thank Florian Herzig for providing the proof of the following lemma.

\begin{lemma}
\label{lem:florian}Suppose that $G$ is a finite subgroup of $\GL_n(k)$, which contains $\SL_n(k')$ for
some subfield $k'$
of $k$, and is contained in $k^{\times} \GL_n(k')$.
\begin{enumerate}
\item 
Any irreducible representation of $G$ over $k$ remains irreducible
upon restriction to $\SL_n(k')$.
\item
Any two irreducible representations of $G$ which become isomorphic upon
restriction to $\SL_n(k')$ can be obtained one from the other via twisting
by a character of $G$ that is trivial on $\SL_n(k')$.
\end{enumerate}
\end{lemma}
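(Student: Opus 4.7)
My overall plan is to apply Clifford theory with respect to the normal subgroup $\SL_n(k') \triangleleft G$. First I would note that $\SL_n(k')$ is normal in $k^{\times} \GL_n(k')$ (hence in $G$), and that the quotient $A := G/\SL_n(k')$ is a finite abelian group, because $k^{\times}$ is central in $k^{\times} \GL_n(k')$ and $\GL_n(k')/\SL_n(k')$ is abelian via the determinant. Clifford's theorem then gives, for any irreducible $G$-representation $V$ over $k$, a semisimple decomposition $V|_{\SL_n(k')} = m(W_1\oplus\cdots\oplus W_r)$ in which the irreducible $\SL_n(k')$-representations $W_i$ form a single $G$-orbit, so the task in part~(1) is to establish that $r = m = 1$.

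To show $r=1$, my plan is to argue that the Clifford action of $G$ on the set of isomorphism classes of irreducible $\SL_n(k')$-representations is trivial. Given $g \in G$, writing $g = \lambda h$ with $\lambda \in k^{\times}$ and $h \in \GL_n(k')$, conjugation by $g$ on $\SL_n(k')$ coincides with conjugation by $h$; choosing an $n$-th root $\mu \in k$ of $\det h$ and setting $h_0 := \mu^{-1} h \in \SL_n(k)$, it further equals conjugation by $h_0$. The crucial input is Steinberg's tensor product theorem: every irreducible $k$-representation $W$ of $\SL_n(k')$ extends to a rational representation $\widetilde{W}$ of $\SL_n$ as an algebraic group over $k$. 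The intertwiner $\widetilde{W}(h_0)$ then witnesses the isomorphism $W \cong W^{h_0}$ of $\SL_n(k')$-representations, establishing triviality of the Clifford action.

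The more substantial step will be to show that the multiplicity $m$ equals $1$, for which I would construct an extension of $W$ to a full $G$-representation. Using $\GL_n(k) = \SL_n(k) \cdot k^{\times}$ with $\SL_n(k) \cap k^{\times} = \mu_n(k)$, I would extend $\widetilde{W}$ from an $\SL_n(k)$-representation to an abstract $\GL_n(k)$-representation: this reduces to extending the central character of $\widetilde W$ on $\mu_n(k)$ to a character of $k^{\times}$, which is possible because $k^{\times}$ is divisible (as $k$ is algebraically closed). Restricting along $G \subset k^{\times}\GL_n(k') \subset \GL_n(k)$ yields a $G$-extension $\widetilde{W}_G$ of $W$, after which the canonical evaluation map
\[
\widetilde{W}_G \otimes_k \Hom_{\SL_n(k')}(\widetilde{W}_G, V) \lra V
\]
is a $G$-equivariant isomorphism. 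Since $\SL_n(k')$ acts trivially on the Hom-space and $V$ is irreducible, that Hom-space is an irreducible $A$-representation; as $A$ is abelian and $k$ algebraically closed, it is one-dimensional, giving $m=1$.

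For part~(2), given irreducible $V_1, V_2$ with isomorphic restrictions to $\SL_n(k')$ (which are themselves irreducible by~(1)), Schur's lemma makes $\Hom_{\SL_n(k')}(V_1, V_2)$ one-dimensional, and the natural $G$-action on it factors through a character $\chi$ of $G$ that is trivial on $\SL_n(k')$; a nonzero element of this Hom-space then supplies a $G$-equivariant isomorphism $V_1 \otimes \chi \cong V_2$. The main obstacle in the whole plan is the triviality of the Clifford action, which genuinely uses specific facts about modular representations of $\SL_n(\mathbb{F}_q)$ via Steinberg's theorem, rather than any general abstract property of the abelian quotient.
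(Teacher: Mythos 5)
Your proof is correct. Both your argument and the paper's hinge on the same key fact (Steinberg: every irreducible $k$-representation of $\SL_n(k')$ is the restriction of an algebraic representation of $\SL_n$, which can then be extended to an abstract representation of $\GL_n(k)$ and hence of $G$), but the way you exploit that fact differs from the paper. The paper, having produced the $G$-extension $\widetilde W_G$ of $W$, applies Frobenius reciprocity together with the projection formula to get a surjection $(\Ind_{\SL_n(k')}^G 1)\otimes \widetilde W_G \twoheadrightarrow V$, and then observes that $\Ind_{\SL_n(k')}^G 1$ splits into one-dimensional characters (here it uses that $G/\SL_n(k')$ is abelian of prime-to-$p$ order, hence Maschke applies), so that $V$ is immediately a character twist of $\widetilde W_G$; this gives both (1) and (2) in one stroke. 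You instead run classical Clifford theory: you check triviality of the Clifford orbit via the explicit intertwiner $\widetilde W(h_0)$ with $h_0=\mu^{-1}h\in\SL_n(k)$, and separately you show the multiplicity is one via the evaluation isomorphism $\widetilde W_G\otimes\Hom_{\SL_n(k')}(\widetilde W_G,V)\isoto V$ and irreducibility of the $A$-module $\Hom_{\SL_n(k')}(\widetilde W_G,V)$. Your route is somewhat longer and more explicit, but it is slightly more robust in that it does not invoke semisimplicity of $k[G/\SL_n(k')]$ (you only use that irreducible representations of a finite abelian group over an algebraically closed field are one-dimensional, which is true in any characteristic), and you spell out the extension step to $\GL_n(k)$ concretely via divisibility of $k^\times$, which the paper leaves implicit. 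The paper's route is shorter because Frobenius reciprocity packages the orbit-and-multiplicity bookkeeping into a single surjection.
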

\begin{proof}
Let $G$ act via $\theta$ on the
  $k$-vector space $V$, and let $(\theta,W)$ be an irreducible
  subrepresentation of $\theta|_{\SL_n(k')}$. Then $W$ is obtained by
  restriction from a representation of the algebraic group $\SL_n/k'$
  (cf. Section~1 of~\cite{MR933356}), so the action of $\SL_n(k')$ on $W$ may be extended to an action of
  $\GL_n(k)$ and thus of $G$. By Frobenius reciprocity we obtain a
  surjective map $(\Ind_{\SL_n(k')}^G1)\otimes W\to V$ of
  $G$-representations. Since $G/\SL_n(k')$ is a finite abelian group
  of prime to $p$ order, we see that $(\Ind_{\SL_n(k')}^G1)$ is a
  direct sum of one-dimensional representations, so that $V$ is a
  twist of $W$ by some character which is trivial on $\SL_n(k')$. Thus
  the restriction of $\theta$ to $\SL_n(k')$ is just $W$, which is
  irreducible, proving~(1).

This same argument also serves to establish~(2).
\end{proof}

\begin{lemma}
\label{lem:big}
Assume that $p\ge n$. If $\rho:G \to \GL_n(k)$ is irreducible, and if $\SL_n(k') \subseteq \rho(G)
\subseteq k^{\times}\GL_n(k')$ for some
subfield $k'$ of $k$,
then $\rho$ satisfies Hypothesis~{\em \ref{hyp:char}}.
\end{lemma}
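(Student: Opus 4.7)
The plan is to reduce to showing $\theta|_H \cong \rho|_H$ for $H := \SL_n(k')$ and then invoke Lemma~\ref{lem:florian}. First, Lemma~\ref{lem:kernels} gives $\ker\rho \subseteq \ker\theta$, so replacing $G$ by $\rho(G)$ we may assume $\rho$ is the tautological inclusion $G \hookrightarrow k^\times\GL_n(k')$ with $G \supseteq H$. Since $G$ is finite and contains $\SL_n(k')$, the subfield $k'$ is also finite; write $k' = \F_q$ with $q = p^r$. By Lemma~\ref{lem:florian}(1), $\theta|_H$ is irreducible; once we have $\theta|_H \cong \rho|_H$, Lemma~\ref{lem:florian}(2) will yield $\theta \cong \rho \otimes \chi$ for some character $\chi$ of $G$ trivial on $H$, and the characteristic polynomial hypothesis applied to elements $g \in G$ whose image $\rho(g)$ has $n$ distinct eigenvalues with no non-trivial multiplicative orbit structure (such $g$ exist because $G$ contains elements with irreducible characteristic polynomial over $k'$) will then force $\chi(g) = 1$, whence $\chi \equiv 1$ and $\theta \cong \rho$.

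To prove $\theta|_H \cong \rho|_H$, I would fix an element $h \in H$ whose characteristic polynomial is irreducible of degree $n$ over $k'$, corresponding to a regular element of the non-split maximal torus of $H$ of type $(n)$. Its eigenvalues are the Frobenius conjugates $\lambda, \lambda^q, \ldots, \lambda^{q^{n-1}}$ in $k^\times$, and the characteristic polynomial hypothesis forces the eigenvalues of $\theta(h)$ into this Frobenius orbit. Since the norm $N_{\F_{q^n}/\F_q}$ is surjective, the $\GL_n(\F_q)$-conjugacy class of $h$ does not split in $\SL_n(\F_q)$, so there is an element $\phi \in H$ with $\phi h \phi^{-1} = h^q$. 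Combining $\theta(h^q) = \theta(h)^q$ with the conjugacy of $\theta(h)$ and $\theta(\phi h \phi^{-1})$, the multiset of eigenvalues of $\theta(h)$ is invariant under the cyclic shift $\lambda^{q^i} \mapsto \lambda^{q^{i+1}}$; since these are distinct, each appears with the same multiplicity $c := (\dim\theta)/n$, forcing $n \mid \dim\theta$.

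To upgrade this to the desired isomorphism, I would pass to $\bar{k}$ and use the conjugacy of the non-split and split tori in $\SL_{n,\bar{k}}$ to translate the eigenvalue constraint into a constraint on the weights of $\theta$ with respect to the split torus $T$. Writing a weight as $\mu = \sum_i a_i \epsilon_i$, evaluation at $h$ gives $\mu(h) = \lambda^{\sum_i q^{i-1} a_i}$, which must equal $\lambda^{q^{j-1}}$ for some $j$. Using Steinberg's tensor product theorem to bound the $a_i$ (where $p \geq n$ enters through the $q$-restrictedness of the highest weight of $\theta|_H$), and choosing $\lambda$ of sufficiently large order (available since the norm-one subgroup of $\F_{q^n}^\times$ contains generators of $\F_{q^n}$), this congruence becomes an integer equation; the uniqueness of base-$q$ expansions then pins down $\mu = \epsilon_j$. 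Hence every $T$-weight of $\theta|_H$ lies in $\{\epsilon_1, \ldots, \epsilon_n\}$, each appearing with common multiplicity $c$ by Weyl invariance under $S_n$, and Steinberg's highest-weight classification of irreducible $H$-representations then forces $c = 1$ and $\theta|_H \cong \rho|_H$. The main obstacle will be this final weight analysis, where one must carefully balance the bounds on weight coefficients (invoking $p \geq n$) against the genericity condition on $\lambda$ in order for the base-$q$ uniqueness argument to apply unambiguously.
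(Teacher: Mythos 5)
Your overall strategy matches the paper's: reduce modulo $\ker\rho$, invoke Lemma~\ref{lem:florian} to get irreducibility of $\theta|_{\SL_n(k')}$ and the twist relation $\theta\cong\rho\otimes\chi$, and pin down $\theta|_{\SL_n(k')}$ by evaluating the formal character on a regular semisimple element of the anisotropic (norm-one) torus, i.e.\ on an element with eigenvalues $\lambda,\lambda^q,\dots,\lambda^{q^{n-1}}$ where $\lambda$ is a primitive $(q^n-1)/(q-1)$th root of unity. Your observation that the eigenvalue multiset of $\theta(h)$ is invariant under the cyclic shift (via an element $\phi\in\SL_n(k')$ with $\phi h\phi^{-1}=h^q$, using surjectivity of the norm) is a nice extra symmetry that the paper does not use explicitly, and it cleanly gives $n\mid\dim\theta$.

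However, there is a real gap at the step where you ``pin down $\mu=\epsilon_j$ by uniqueness of base-$q$ expansion.'' The $q$-restrictedness condition on the highest weight $(a_1,\dots,a_n)$ only bounds the successive differences: $0\le a_i-a_{i+1}\le q-1$. After normalizing $a_n=0$, the individual coordinates satisfy $0\le a_i\le (n-1)(q-1)$, so $\sum_i q^{i-1}a_i$ is \emph{not} a base-$q$ expansion and can greatly exceed $(q^n-1)/(q-1)$. The constraint you extract, namely $\sum_i q^{i-1}a_i\equiv q^{j-1}\pmod{(q^n-1)/(q-1)}$, therefore does not single out a unique digit string, and your ``integer equation'' step does not follow. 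The paper gets around exactly this difficulty by comparing two Weyl conjugates of the highest weight that differ by a single transposition in adjacent positions, so that all but one coordinate cancels and one is left with the manageable congruence $(q-1)(a_i-a_{i+1})\equiv q^\beta - q^\gamma\pmod{(q^n-1)/(q-1)}$; the bound $(q-1)^2<(q^n-1)/(q-1)$ (valid for $n\ge 3$) then closes the argument, and the cases $n\le 2$ are handled separately (Mazur's argument for $n=2$). Your sketch also leaves the elimination of the residual twist $\chi$ vague: for a general $g=\lambda h\in k^\times\GL_n(k')$ you cannot simply replace $h$ by an element with irreducible characteristic polynomial --- you are only free to replace it by an element of $\GL_n(k')$ with the same determinant --- and the paper instead takes $h'$ with eigenvalues $\{1,\dots,1,\det h\}$, reducing to the dichotomy $\chi(g)\in\{1,-1\}$ and then excluding $-1$ by a further explicit choice (which again uses $p\ge n$, hence $p\ge 5$ when $n$ is even). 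Both of these points need genuine arguments rather than the appeals to generic position you give.
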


\begin{proof}The case $n=1$ follows from Lemma \ref{lem:characters},
  and as remarked above the case $n=2$ is proved in the course of the
  proof of Proposition 14.2 of \cite{MR488287}, so we may assume that
  $n\ge 3$.  By Lemma
  \ref{lem:kernels}, we may assume that $\rho$ is faithful, so that we
  can identify $G$ with $\rho(G)$. In particular, $\SL_n(k')$ is a
  subgroup of $G$, and by Lemma~\ref{lem:florian}, the
restriction of $\theta$ to $\SL_n(k')$ remains irreducible.

  Since $G$ is finite, our assumption that $\SL_n(k') \subset G$  implies that
  $k'$ is finite; suppose that $k'$ has cardinality $q$. We recall some basic facts
  about the representation theory of $\SL_n(k')$; see for example
  Section~1 of~\cite{MR933356}. The irreducible
  $k$-representations of $\SL_n(k')$ are obtained by restriction from
  the algebraic group $\SL_n/k'$, and are precisely those
  representations whose highest weights are $q$-restricted. (With the
  usual choice of maximal torus $T$ of $\SL_n$, if we identify the weight lattice with $\mathbb Z^n$ modulo the
diagonally embedded copy of $\mathbb Z$, a weight
  $(a_1,\dots,a_n)$ is $q$-restricted if $0\le a_i-a_{i+1}\le q-1$
  for all $1\le i\le n-1$.) Suppose that $\theta$ has highest weight
  $(a_1,\dots,a_n)$. 
Let $g\in\SL_n(k')$ be a semisimple element
  with eigenvalues $\alpha_1,\dots,\alpha_n$. Then, since $g$ is
  conjugate to an element of $T(k)$,  by considering the
  formal character of the corresponding representation of $\SL_n/k'$
  we see that among the eigenvalues of $\theta(g)$ are each of the
  quantities \[\prod_{i=1}^n\alpha_i^{x_i},\]where the $x_i$ are a
  permutation of $a_1,\dots,a_n$.

  Our assumption on $\theta$ and $\rho$ implies that for each such
  permutation, $\prod_{i=1}^n\alpha_i^{x_i}$ must equal one of
  $\alpha_1,\dots,\alpha_n$. In particular, if we let $\alpha$ be a
  primitive $(q^n-1)/(q-1)$-st root of unity, we may consider a
  semisimple element $g$ with eigenvalues
  $\alpha,\alpha^q,\dots,\alpha^{q^{n-1}}$. Then for any
  $x_1,\dots,x_n$ as above, there must be an integer $0\le \beta\le n-1$ such
  that \[q^{n-1}x_1+q^{n-2}x_2+\dots+x_n\equiv
  q^\beta\pmod{(q^n-1)/(q-1)}.\]

 Fix some $1\le i\le n-1$, and
  consider two permutations $x_1,\dots,x_n$ and $x'_1,\dots,x'_n$ as
  above, which satisfy $x_i=x'_i$ for $1\le i\le n-2$, $x_{n-1}=a_i$,
  $x_n=a_{i+1}$, $x'_{n-1}=a_{i+1}$ and $x'_n=a_i$. Taking the difference of the
  two expressions $q^{n-1}x_1+q^{n-2}x_2+\dots+x_n$ and
  $q^{n-1}x'_1+q^{n-2}x'_2+\dots+x'_n$, we conclude that there are
  integers $0\le \beta,\gamma\le n-1$ such that \[(q-1)(a_i-a_{i+1})\equiv
  q^\beta-q^\gamma\pmod{(q^n-1)/(q-1)}.\] Since $n\ge 3$, we have
  $0\le (q-1)(a_i-a_{i+1})\le (q-1)^2<(q^n-1)/(q-1)$, and we conclude
  that either $\beta\ge \gamma$ and
  $(q-1)(a_i-a_{i+1})=q^\beta-q^\gamma$, or $\beta<\gamma$ and
  $(q-1)(a_i-a_{i+1})=(q^n-1)/(q-1)+q^\beta-q^\gamma$. In the second case, we
  have $(q-1)^2\ge (q-1)(a_i-a_{i+1})=(q^n-1)/(q-1)+q^\beta-q^\gamma\ge
  (q^n-1)/(q-1)+1-q^{n-1}=q^{n-2}+\dots+q+2$. This is a contradiction
  if $n\ge 4$. If $n=3$, $(q^3-1)/(q-1)\equiv 3 \pmod{q-1}$, so that
  $(q-1)|3$, which is a contradiction as $p>2$.

  Thus it must be the case that $\beta\ge\gamma$ and $(q-1)(a_i-a_{i+1})=q^\beta-q^\gamma$,
  so that $a_i-a_{i+1}$ is congruent to $0$ or $1\pmod{q}$, and thus
  $a_i-a_{i+1}=0$ or $1$. In particular, for each $i$ we have $0\le a_i-a_n\le n-1\le
  p-1$. 

  We now repeat the above analysis. Fix some $1\le i\le n-1$, and
  consider two permutations $x_1,\dots,x_n$ and $x'_1,\dots,x'_n$ as
  above, which satisfy $x_i=x'_i$ for $1\le i\le n-2$, $x_{n-1}=a_i$,
  $x_n=a_{n}$, $x'_{n-1}=a_{n}$ and $x'_n=a_i$. Taking the difference
  of the two expressions $q^{n-1}x_1+q^{n-2}x_2+\dots+x_n$ and
  $q^{n-1}x'_1+q^{n-2}x'_2+\dots+x'_n$, and using that $0\le
  a_i-a_n\le p-1\le q-1$, we conclude as before that each $a_i-a_n=0$
  or $1$. Thus there must be an integer $1\le r\le n$ with
  $a_1=\dots=a_r=a_n+1$, $a_{r+1}=\dots=a_n$.

  Returning to the original
  congruences \[q^{n-1}x_1+q^{n-2}x_2+\dots+x_n\equiv
  q^\beta\pmod{(q^n-1)/(q-1)},\]we see that the left hand side is
  congruent to a sum of precisely $r$ distinct values $q^j$, 
  $1\le j \le n-1$. Thus $r=1$, and $\theta|_{\SL_n(k')}$ is the
  standard representation of $\SL_n(k')$,
  i.e. $\theta|_{\SL_n(k')}\cong\rho|_{\SL_n(k')}$.
  By part~(2) of Lemma~\ref{lem:florian},
  we see that there is a character
  $\chi:G\to k^\times$ with $\chi|_{\SL_n(k')}=1$ such that
  $\theta\cong\rho\otimes\chi$.

  To complete the proof, we must show that $\chi$ is trivial. Take
  $g\in G$; we will show that $\chi(g)=1$. If $g'\in G$ has $\det
  g'=\det g$, then $g(g')^{-1}\in\SL_n(k')$, so
  $\chi(g')=\chi(g)$. Firstly, note that by assumption we can write
  $g=\lambda h$, 
  with $\lambda\in k^\times$, $h\in \GL_n(k')$. Choose
  $h'\in\GL_n(k')$ to have eigenvalues $\{1,\dots,1,\det(h)\}$. Then
  $h(h')^{-1}\in\SL_n(k')\subset G$, so $g':=\lambda h'$ is an element
  of $G$. Then the hypothesis on $\theta$ and $\rho$ shows that the
  eigenvalues of $\chi(g')g' = \chi(g)g'$ are contained in the set of eigenvalues
  of $g'$, so that if $\chi(g)\ne 1$ we must have
  $\chi(g)=\det(h)=-1$. Assume for the sake of contradiction that this
  is the case. If $n$ is odd, then we now choose $h'$ to have
  eigenvalues $\{-1,\dots,-1\}$, and we immediately obtain a
  contradiction from the same argument. If $n$ is even then since
  $p\ge n$ we have $p\ge 5$ (recall that we are assuming $n\geq 3$),
  and we may choose $a\in (k')^\times$,
  $a\ne \pm 1$. Then choosing $h'$ to have eigenvalues
  $\{1,\dots,1,a,-1/a\}$ gives a contradiction.
\end{proof}

\subsection{Representations whose image is generated by regular elements}  
\label{subsec:reg}
Recall that a square matrix with entries in $k$ is said to be {\em regular} if
its minimal polynomial and characteristic polynomial coincide.

\begin{lem}
  \label{lem:regular implies equal determinants}If $\rho:G\to\GL_n(k)$ and
  $\theta:G\to\GL_n(k)$ are representations such that the image
  $\theta(G)$ is generated by its subset of regular elements, and for
  every $g\in G$ the characteristic polynomial of $\rho(g)$
  annihilates $\theta(g)$, then $\det\rho=\det\theta$.
\end{lem}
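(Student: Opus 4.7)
The plan is to show that the character $\chi := (\det\rho)\cdot(\det\theta)^{-1}\colon G \to k^{\times}$ is identically trivial.

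The first step is to analyze those $g \in G$ for which $\theta(g)$ is regular. For such a $g$, the minimal polynomial of $\theta(g)$ coincides with its characteristic polynomial and so is monic of degree exactly $n$. By hypothesis $\chara(\rho(g))$ is a monic degree $n$ polynomial annihilating $\theta(g)$, and hence divisible by the minimal polynomial of $\theta(g)$; comparing degrees, we conclude
\[
\chara(\rho(g)) = \chara(\theta(g)),
\]
and in particular $\det\rho(g) = \det\theta(g)$, i.e.\ $\chi(g) = 1$.

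Set $S := \{g \in G : \theta(g) \text{ is regular}\}$. The previous paragraph shows that $\chi$ vanishes on $S$, and since $\chi$ is a homomorphism, it is trivial on the subgroup $\langle S\rangle$ generated by $S$. The hypothesis that $\theta(G)$ is generated by its regular elements is exactly the statement that $\theta(S)$ generates $\theta(G)$; equivalently, $\theta(\langle S\rangle) = \theta(G)$, so $G = \langle S\rangle\cdot\ker\theta$. It therefore suffices to show that $\chi$ is trivial on $\ker\theta$. For this I would use the following translation trick: fix any $g_0 \in S$ (noting that $S$ is non-empty, since otherwise $\theta(G)$ would be trivial and the claim is immediate when $n=1$, while the non-trivial case is the one where the hypothesis bites). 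For any $h \in \ker\theta$, we have $\theta(g_0 h) = \theta(g_0)$, which is regular, so $g_0 h \in S$ as well; hence
\[
\chi(h) = \chi(g_0 h)\chi(g_0)^{-1} = 1\cdot 1 = 1.
\]

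There is no serious obstacle in this argument: it is essentially a short direct computation. The only conceptual point is the translation-by-a-regular-element observation used to extend triviality of $\chi$ from $\langle S\rangle$ to all of $G$.
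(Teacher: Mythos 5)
Your proof is correct and follows essentially the same route as the paper: you compare characteristic polynomials at elements where $\theta$ is regular, then invoke the generation hypothesis. The translation-by-$g_0$ step you spell out is exactly the content compressed in the paper's "the result follows"; one can phrase it a touch more efficiently by observing that your argument already shows $\ker\theta\subset\langle S\rangle$ (since $h=g_0^{-1}(g_0h)$ with $g_0, g_0h\in S$), so in fact $\langle S\rangle=G$ whenever $S\neq\emptyset$.
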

\begin{proof}
  Let $g\in G$ be an element such that $\theta(g)$ is regular. Then
  the characteristic polynomials of $\rho(g)$ and $\theta(g)$ must be
  equal (since the minimal and characteristic polynomials of
  $\theta(g)$ coincide, and the characteristic polynomial of $\rho(g)$
  annihilates $\theta(g)$), so $\det\rho(g)=\det\theta(g)$. Since
  $\theta(G)$ is generated by its subset of regular elements, the
  result follows.
\end{proof}

In fact, we actually need a slight generalisation of this result,
where we simply have a collection of characteristic polynomials,
rather than a representation $\rho$. Suppose that for each $g\in G$ we
have a monic polynomial $\rho_g(X)=X^n-a_{1}(g)X^{n-1}+\dots+(-1)^na_{n}(g)\in
k[X]$ of degree $n$, with the property that for all $g$, $h\in G$, we
have $a_{n}(gh)=a_{n}(g)a_{n}(h)$.
\begin{lem}
  \label{lem:regular implies equal dets, starting with char polys
    rather than a repn}Suppose that $\theta:G\to\GL_n(k)$ is a
  representation with the property that $\theta(G)$ is generated by
  its subset of regular elements, and that for each $g\in G$ we have
  $\rho_g(\theta(g))=0$. Then for each $g\in G$ we have $\det\theta(g)=a_{n}(g)$.
\end{lem}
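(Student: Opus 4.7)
The plan is to follow the strategy of Lemma~\ref{lem:regular implies equal determinants}, but since we no longer have a representation $\rho$ to supply a determinant character directly, the first step will be to verify that $a_n$ on its own defines a character $G\to k^\times$. From the multiplicativity hypothesis $a_n(gh)=a_n(g)a_n(h)$ applied to $g=h=e$, we get $a_n(e)\in\{0,1\}$. For any $g\in G$ with $\theta(g)$ regular, the polynomial $\rho_g$ is a monic polynomial of degree $n$ annihilating $\theta(g)$, so it coincides with the minimal (equivalently, characteristic) polynomial of $\theta(g)$; in particular $a_n(g)=\det\theta(g)\in k^\times$. Assuming the set $S:=\{g\in G:\theta(g)\text{ is regular}\}$ is non-empty (see the final paragraph), this forces $a_n(e)=1$, and then $a_n(g)a_n(g^{-1})=1$ shows $a_n$ is a group homomorphism $G\to k^\times$.

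Next, consider the character $\chi:=(\det\theta)\cdot a_n^{-1}:G\to k^\times$. By the computation above, $\chi(g)=1$ for every $g\in S$, and hence $\chi$ is trivial on the subgroup $H:=\langle S\rangle\leq G$. Because $\theta(S)$ is exactly the set of regular elements of $\theta(G)$, the hypothesis that $\theta(G)$ is generated by these elements gives $\theta(H)=\theta(G)$, so $G=H\cdot\ker\theta$. It therefore suffices to show that $\chi$ is trivial on $\ker\theta$.

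For this, fix any $s\in S$ and let $k\in\ker\theta$. Then $\theta(sk)=\theta(s)$ is regular, so $sk\in S$; since $\chi$ is a homomorphism and vanishes on $S$, we obtain $1=\chi(sk)=\chi(s)\chi(k)=\chi(k)$, as required.

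The only step that might look like an obstacle is the degenerate case $S=\emptyset$; but then the subgroup of $\theta(G)$ generated by regular elements is trivial, so $\theta(G)=\{I\}$, which forces $n=1$ (as otherwise $I$ itself is not regular and the hypothesis is vacuous; in the applications of this lemma $\theta$ will be non-trivial so this case does not arise), and in the case $n=1$ the identity $\rho_g(\theta(g))=0$ reads $\theta(g)-a_1(g)=0$, giving $a_n(g)=a_1(g)=\theta(g)=\det\theta(g)$ directly. The rest of the argument is essentially formal manipulation with characters, and the key observation —which is what makes the generalisation possible— is that multiplying a regular element by a kernel element yields another regular element, so $S\cdot\ker\theta\subseteq S$.
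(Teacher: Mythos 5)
Your proof is correct and follows the same underlying strategy as the paper, which simply refers back to the proof of Lemma~\ref{lem:regular implies equal determinants}. Where the paper's argument ends with the terse ``the result follows'' after establishing $\det\theta=a_n$ on the set $S$ of $g$ with $\theta(g)$ regular, you supply the missing step --- that the character $(\det\theta)a_n^{-1}$, trivial on $\langle S\rangle$, is also trivial on $\ker\theta$, via the observation $S\cdot\ker\theta\subseteq S$ --- which is a genuine (if small) gap in the paper's telegraphic proof. Your preliminary verification that $a_n$ is a well-defined character $G\to k^\times$ (automatic in Lemma~\ref{lem:regular implies equal determinants}, where $\det\rho$ plays that role, but needed here) and your discussion of the degenerate case $S=\emptyset$ are likewise correct and do not appear in the paper.
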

\begin{proof}
  This may be proved in exactly the same way as Lemma \ref{lem:regular
    implies equal determinants}.
\end{proof}

Let $\rho:G \to \GL_3(k)$ be irreducible. Our goal is to give
criteria for $\rho$ to satisfy the following hypothesis, in order to
apply the previous lemmas:
\begin{hyp}
\label{hyp:regular}
{\em The image $\rho(G)$ is generated by its subset of regular elements.}
\end{hyp}

\begin{lemma}
\label{lem:regular}
If $\rho: G \to \GL_3(k)$ is irreducible, and if either:
\begin{enumerate}
\item $\rho$ is isomorphic to an induction $\Ind_H^G \psi$, where $H$ is a normal
subgroup of index $3$ in $G$ and $\psi:H \to k^{\times}$ is a
character, or
\item  $\rho(G)$ contains a regular unipotent element,
\end{enumerate}
then $\rho$ satisfies Hypothesis~{\em \ref{hyp:regular}}.
\end{lemma}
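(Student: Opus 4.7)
The plan is to show every element of $\rho(G\setminus H)$ is already regular; the lemma then follows because each $h \in H$ can be written as $\rho(g)\rho(g^{-1}h)$, a product of two elements lying outside $\rho(H)$ (since $g^{-1}H = g^2 H \neq H$). To verify regularity of $\rho(x)$ for $x \in G\setminus H$, I would use the explicit form of the induced representation: fixing a generator $g$ of $G/H$ and working in the basis $\{[1],[g],[g^2]\}$, each $\rho(x)$ is a ``cyclic monomial'' matrix (one non-zero entry per row and column, with the support giving a $3$-cycle), so its trace and second elementary symmetric function vanish, and its characteristic polynomial is $X^3 - \psi(x^3)$. When $\operatorname{char}(k) \neq 3$ this polynomial has three distinct roots and $\rho(x)$ is regular; when $\operatorname{char}(k) = 3$ the polynomial is $(X-\alpha)^3$, and a short computation of $(\rho(x)-\alpha I)^2$, using that $\rho(x)^3 = \alpha^3 I$ while $\rho(x)$ is not scalar, shows the minimal polynomial also equals $(X-\alpha)^3$.

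\textbf{Plan for case (2).} Let $R \subseteq \rho(G)$ be the subgroup generated by regular elements; it is normal (regularity being conjugation-invariant) and contains $u$. \emph{First,} I would show $R$ acts irreducibly on $V = k^3$: any non-zero $R$-invariant subspace $W$ must contain the unique fixed line $V^u$ of $u$, and by normality of $R$ also $h\cdot V^u$ for every $h \in \rho(G)$; the span of all these lines is $\rho(G)$-invariant and hence equals $V$ by irreducibility of $\rho$. \emph{Second,} I would argue by contradiction. Suppose $g \in \rho(G)\setminus R$; then $g$ is non-regular. If $g = \lambda I$ is scalar, $gu = \lambda u$ is regular (scalar times regular unipotent is regular), so $gu \in R$ and hence $g\in R$, contradiction. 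So $g$ is semisimple with eigenvalues $a,a,b$ ($a\neq b$) and eigenspaces $V = V_a \oplus V_b$ with $\dim V_a = 2$. The irreducibility established above lets me choose $h \in \rho(G)$ with $hV^u \not\subseteq V_a$ (since otherwise the span of the $hV^u$ would lie in $V_a \neq V$), and set $u' := huh^{-1} \in R$. It then suffices to show $gu'$ is regular, for then $g = (gu')(u')^{-1} \in R$, giving the required contradiction.

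\textbf{Main obstacle.} The hard part is proving that $gu'$ is regular for this carefully chosen $u'$. The plan is to suppose $\dim\ker(gu'-\lambda I) \geq 2$ for some $\lambda$, take a two-dimensional subspace $W \subseteq \ker(gu'-\lambda I)$, and exploit the consequence $u'w = \lambda g^{-1}w$ for $w \in W$. I would then enumerate the five possibilities for the pair $(\dim(W\cap V_a), \dim(W\cap V_b))$: namely $(2,0)$, $(1,1)$, $(1,0)$, $(0,1)$, $(0,0)$. Any $g$-eigenvector $w \in W$ with eigenvalue $\mu$ forces $u'w = (\lambda/\mu)w$, hence $\lambda = \mu$ and $w \in V^{u'}$ by unipotence of $u'$; this immediately kills $(2,0)$ (which would force $V_a \subseteq V^{u'}$), $(1,1)$ (which would force $a = \lambda = b$), and $(1,0)$ (which would force the non-zero subspace $V^{u'}\cap V_a$ to exist, contradicting our choice $V^{u'}\not\subseteq V_a$). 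In case $(0,1)$ one has $V^{u'} = V_b$, and decomposing any $w \in W\setminus V_b$ as $w_a+w_b$ the identity $u'w = \lambda g^{-1}w = bg^{-1}w$ yields after simplification $u'w_a = (b/a)w_a$, again contradicting the unipotence of $u'$. Case $(0,0)$ is impossible on dimensional grounds, since then $\dim(W+V_a) = 4 > 3$. The whole case analysis succeeds precisely because of the choice of $u'$ with $V^{u'}\not\subseteq V_a$, which is the geometric input that replaces the more naive attempt to use $u$ itself.
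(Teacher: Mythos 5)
Your case~(1) is essentially the paper's argument (show $\rho(g)$ is regular for $g\notin H$, then note $G\setminus H$ generates $G$), with one cosmetic caveat: the two facts $\rho(x)^3 = \alpha^3 I$ and ``$\rho(x)$ not scalar'' do not by themselves exclude minimal polynomial $(X-\alpha)^2$ in characteristic $3$, so the explicit computation of $(\rho(x)-\alpha I)^2$ from the cyclic-monomial form really is the content of that step.

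Case~(2), however, has a genuine gap. After ruling out $g$ scalar you write ``so $g$ is semisimple with eigenvalues $a,a,b$, $a\neq b$'', and this does not follow. A non-regular, non-scalar element of $\GL_3(k)$ can also have a single eigenvalue $a$ with Jordan type $2{+}1$, i.e.\ $g = a(I+N)$ with $N$ nilpotent of rank one; such elements occur in finite subgroups (they have order $p\cdot\mathrm{ord}(a)$), and indeed the paper's own proof is written to cover them. Your five-way analysis of $\bigl(\dim(W\cap V_a),\dim(W\cap V_b)\bigr)$ presupposes a decomposition $V = V_a\oplus V_b$ that simply does not exist for this third type of $g$, so that case is unhandled. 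The gap is easily patched, and the patch in fact collapses your five cases into one: in every non-regular non-scalar situation $g$ has a unique $2$-dimensional eigenspace $V_a := \ker(g-aI)$ on which $g^{-1}$ acts as $a^{-1}$ (true whether $g$ is semisimple or $g=a(I+N)$). With $u'$ chosen so that $V^{u'}\not\subseteq V_a$, if $W\subseteq\ker(gu'-\lambda I)$ had $\dim W = 2$ then $W\cap V_a\neq 0$ by a dimension count, and any nonzero $w\in W\cap V_a$ would satisfy $u'w=(\lambda/a)w$, forcing $\lambda=a$ and $w\in V^{u'}$, i.e.\ $V^{u'}\subseteq V_a$, a contradiction; since a $3\times3$ matrix over an algebraically closed field is regular iff all its eigenspaces are $1$-dimensional, $gu'$ is regular. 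For comparison, the paper's proof is more geometric and avoids both the irreducibility-of-$R$ sublemma and the choice of a special conjugate $u'$: for $g\in G\setminus H$ it considers the unique plane $\ell_g$ on which $g$ acts as a scalar, notes $gh\in G\setminus H$ as well (with $h$ the given regular unipotent), and observes that the line $\ell_g\cap\ell_{gh}$ is fixed by $g$ and $gh$, hence by $h$, hence equals $V^h$; so every element of $G\setminus H$, and therefore all of $G$, fixes $V^h$, contradicting irreducibility.
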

\begin{proof}
Suppose firstly that $\rho$ is isomorphic to an induction $\Ind_H^G
\psi$. Since $H$ is a proper subgroup of $G$, the set of elements
$G-H$ generates $G$. If $g\in G-H$ then the characteristic polynomial
of $\rho(g)$ is of the form $X^3-\alpha$. If $p\ne 3$ then this has
distinct roots, so $\rho(g)$ is regular, and if $p=3$ then it is easy
to check that $\rho(g)$ is the product of a scalar matrix and a
unipotent matrix, and is regular.

Suppose now that $\rho(G)$ contains a regular unipotent element. For
ease of notation, we will refer to $\rho(G)$ as $G$ from now on. Let
$H$ be the subgroup of $G$ generated by the regular elements, and
assume for the sake of contradiction that $H$ is a proper subgroup of
$G$. We claim that $H$ contains every scalar matrix in
$G$; this is true because the product of a scalar matrix and a
regular matrix is again a regular matrix. Consider an element $g\in
G-H$; since it is not regular, and not scalar, it acts as a scalar on
some unique plane in $k^3$. We write $\ell_g$ for the corresponding line
in $\mathbb{P}^2(k)$.

Let $h$ be the given regular unipotent element in $H$. Then $h$
stabilises a unique line in $k^3$, so a unique point $P\in\mathbb{P}^2(k)$. As
$g\in G-H$, we also have $gh\in G-H$. Then $\ell_g\cap \ell_{gh}$ is
non-empty, so there is a point $Q\in\mathbb{P}^2(k)$ which is fixed by $g$ and
$gh$. It is thus also fixed by $h$, so in fact $Q=P$. Since $g$ was an
arbitrary element of $G-H$, we see that every element of $G-H$ fixes
$P$, and since $G$ is generated by $G-H$, this implies that every
element of $G$ fixes $P$. This contradicts the assumption that $\rho$
is irreducible.
\end{proof}

\appendix
\section{Cohomology of pairs}\label{appendix: cohomology results on
  pairs etc}
\renewcommand{\theequation}{A.\arabic{subsection}.\arabic{subsubsection}}

\subsection{\'Etale cohomology of a pair}

Let $X$ be a scheme, finite-type and separated over a field,
let $Z$ be a closed subscheme, and write $j: U \hookrightarrow X$ for the open immersion
of the complement $U:= X\setminus Z$ into $X$.  As in Section
\ref{sec:cohom}, we let $E$ be an algebraic extension of $\Q_p$,
where $p$ is invertible on~$X$, 
let $k_E$ denote the residue field of $E$, and we let 
$A$ be one of $E$ or~$k_E$.
We define the \'etale cohomology of the pair $(X,Z)$ with coefficients
in $A$
to be the \'etale cohomology of the sheaf $j_! A$ on~$X$,
i.e.\ we write
$$H^{\bullet}_{\et}(X,Z, A) := H^{\bullet}_{\et}(X,j_! A).$$
If $i: Z \hookrightarrow X$ is the closed immersion of~$Z$, then the short exact sequence
$$0 \to j_! A \to A \to i_* A \to 0$$
gives rise to a long exact sequence
$$\cdots \to H^m_{\et}(Z, A) \to H^{m+1}_{\et}(X,Z, A) \to H^{m+1}_{\et}(X,A) \to H^{m+1}_{\et}(Z,A) \to \cdots,$$
which is the long exact cohomology sequence of the pair~$(X,Z)$.

We are particularly interested in the case of a pair $(X\setminus Y,Z\setminus Y)$, where $X$ is a smooth projective
variety over a separably closed field, and $Y$ and $Z$ are smooth
divisors on $X$ which meet 
transversely.
In this case we have a Cartesian diagram of open immersions
\numequation
\label{eqn:diagram}
\xymatrix{ X\setminus (Y \cup Z)  \ar^j[r]\ar^{k'}[d] & X\setminus Y \ar^k[d] \\
X\setminus Z \ar^{j'}[r] & X .}
\end{equation}
According to the above definition, the cohomology of the pair $(X\setminus Y,Z\setminus Y)$
is computed
as the cohomology of the sheaf $j_! A$ on $X\setminus Y$, which is canonically isomorphic to the cohomology
of the complex $Rk_* j_! A$ on $X$. 
An important point is that there is a canonical isomorphism
\numequation
\label{eqn:Faltings}
j'_! R k'_* A\iso Rk_* j_! A.
\end{equation}
(See the discussion of \S III (b) on p.~44 of \cite{Fal}.)  

\subsection{Verdier duality.}
Verdier duality (\cite[Exp.\ XVIII]{MR0354654}, \cite{MR0230732}) states that if $f:X \to S$ is a
morphism of finite-type and separated schemes
over a separably closed field $k$, then for any constructible \'etale $A$-sheaves $\cF$ on $X$ and $\cG$ on $S$,
there is a canonical isomorphism (in the derived category) of complexes of \'etale sheaves on $S$
$$
R\!\CHom(Rf_! \mathcal F,\cG) \cong
Rf_* R\!\CHom(\mathcal F,f^!\mathcal G).$$ 
We recall some standard special cases of this isomorphism, in the context
of the diagram~(\ref{eqn:diagram}).

Taking $f$ to be $k'$ (and recalling that $k'$ is an open immersion), we obtain an isomorphism 
$$
R\!\CHom(k'_!A,A) \cong
 Rk'_* R\!\CHom(A,A) 
= Rk'_* A,$$
and hence by double duality,
an isomorphism
\numequation
\label{eqn:k prime case}
R\!\CHom(Rk'_*A,A)
\cong
k_!' A .
\end{equation}
Next, taking $f$ to be $j'$, and taking into account~(\ref{eqn:Faltings})
and~(\ref{eqn:k prime case}),
we obtain isomorphisms
$$R\!\CHom(Rk_* j_! A, A) \cong R\!\CHom(j'_! Rk'_*A,A) \cong Rj'_* R\!\CHom(Rk'_*A,A)
\cong Rj'_* k_!'A.
$$
Finally, taking $f$ to the natural map $X \to \Spec k$, and
recalling that in this case $f^! A = A[2d](d)$ (where $d$ is the
dimension of $X$; cf. \cite[Exp.\ XVIII Thm.\ 3.2.5]{MR0354654})
and that $Rf_*= Rf_!$ (since $f$ is proper, the variety $X$ being projective 
by assumption),
we find that
$$R\!\CHom(Rf_* Rk_* j_!A, A)
\cong
Rf_* R\!\CHom\bigl(Rk_* j_! A, A[2d](d)\bigr) \cong
Rf_*Rj'_* k_!'A[2d](d).$$Passing to cohomology,
we find that $H^m_{\et}(X\setminus Y,Z\setminus Y , A)$ is in natural duality
with $H^{2d -m}(X\setminus Z, Y\setminus Z, A)(d)$.

\subsection{Vanishing outside of, and torsion-freeness in, the middle degree}
\label{subsec:vanishing outside middle}
We continue to assume that $X$ is a smooth projective variety of dimension $d$
over the separably closed field $k$, and that $Y$ and $Z$ are smooth
divisors on $X$ which meet transversely,
but in addition, we now assume that the complements $X\setminus Y$ and $X\setminus Z$
are affine (and hence also that $Z\setminus Y$ and $Y\setminus Z$ are affine).
This latter assumption implies that $H^m_{\et}\bigl(X\setminus Y,
A\bigr)$ vanishes if $m>d$, and that $H^m_{\et}\bigl(Z\setminus Y, A\bigr)$ vanishes if
$m\ge d$
(\cite[Exp.\ XIV Cor.\ 3.3]{MR0354654}).
By the long exact cohomology sequence of the pair
$(X\setminus Y,Y\setminus Z)$, we see that $H^m_{\et}\bigl(X\setminus Y, Z\setminus Y, A\bigr)$
vanishes if $m>d$. Similarly, we see that $H^{2d -m}(X\setminus Z,
Y\setminus Z, A)(d)$ vanishes if $m<d$. Hence, by the duality between
$H^m_{\et}(X\setminus Y,Z\setminus Y , A)$ and $H^{2d -m}(X\setminus
Z, Y\setminus Z, A)(d)$, we find that both vanish unless $m=d$.

 Let $\mathcal O_E$ denote the ring of integers in $E$. Suppose
 momentarily that $E/\Qp$ is finite, 
and let $\unif$ be a uniformiser of $\cO_E$.  
From a consideration of the cohomology long exact sequence arising
from the short exact sequence of sheaves
$$0 \to \mathcal O_E/\unif^n \buildrel \unif \cdot \over \longrightarrow
\mathcal O_E/\unif^{n+1} \longrightarrow k_E \to 0,$$
and arguing inductively on $n$,
we find that 
$H^m_{\et}(X\setminus Y,Z\setminus Y , \mathcal O_E/\unif^n )$
vanishes in degrees other than $m = d$ for all $n$.
Passing to the projective limit over $n$, we see that the same is true of 
$H^m_{\et}(X\setminus Y,Z\setminus Y , \mathcal O_E).$
Finally, a consideration of the cohomology long exact sequence arising
from the short exact sequence $$0 \to \mathcal O_E \buildrel \unif \cdot \over
\longrightarrow \mathcal O_E \longrightarrow k_E \to 0$$
shows that 
$H^d_{\et}(X\setminus Y,Z\setminus Y , \mathcal O_E)$
is torsion-free.

By passage to the direct limit over subfields of $E$ which are finite
over $\Qp$, we see that these properties continue to hold for
arbitrary algebraic extensions $E/\Qp$.

\printbibliography

\end{document}